\definecolor{dkblue}{RGB}{1,31,91} 
\theoremstyle{definition}
\newtheorem{theorem}{Theorem}
\newtheorem{lemma}[theorem]{Lemma}
\newtheorem{proposition}[theorem]{Proposition}
\newtheorem{remark}[theorem]{Remark}
\newtheorem{definition}[theorem]{Definition}
\numberwithin{equation}{section}
\numberwithin{theorem}{section}
\def\C{\mathbb C}
\def\R{\mathbb R}
\def\Z{\mathbb Z}
\def\N{\mathbb N}
\def\T{\mathbb T}
\def\supp{\mathop{\text{supp}}}
\def\grad{\triangledown}
\def\sgn{\text{\normalfont sgn}}
\begin{document}

\keywords{Ill-posedness, porous media, fluid interface}
\subjclass[2020]{76S05, 76B03, 35Q35, 35B30}

%
\title[The Second Iterate of the Muskat Equation in Supercritical Spaces]{The Second Iterate of the Muskat Equation in Supercritical Spaces}

\author[E. Paduro]{Esteban Paduro$^{\dagger}$}
\address{$^\dagger$Instituto de Ingenier\'ia Matem\'atica y Computacional, Facultad de Matem\'aticas, Pontificia Universidad Cat\'olica de Chile, Santiago, Chile.  \href{mailto:esteban.paduro@mat.uc.cl}{esteban.paduro@mat.uc.cl} \newline(\href{https://orcid.org/0000-0003-1769-0055}{https://orcid.org/0000-0003-1769-0055})
}
\address{$^\dagger$Department of Mathematics, University of Pennsylvania, Philadelphia, PA 19104, USA.} 
\thanks{$^\dagger$Partially supported by National Agency for Research and Development (ANID)/Becas Chile/Becas de doctorado en el extranjero 2015-72160564 and the ANID Millennium Science Initiative Program trough Millennium Nucleus for Applied Control and Inverse Problems NCN19-161}

\begin{abstract}
The ill-posedness for the Muskat problem in spaces that are supercritical with respect to the scaling is studied. The main result of the paper establishes that for a sequence of  approximations of the Muskat equation obtained via Taylor expansion, their corresponding second Picard's iterate is discontinuous around the origin in a certain family of supercritical spaces approaching a critical space. 
\end{abstract}

\thispagestyle{empty}
\maketitle

\section{Introduction}

\subsection{Description of the model}

The Muskat equation describes the interface between two incompressible immiscible fluids with different densities in a porous media. The evolution problem can be described as a transport equation where the velocity field is incompressible and the evolution of velocity of the fluid for a porous media is given by the Darcy's law
$$\vec{u} =  - \grad p -  \rho \vec{e}_{n}, $$
where $\vec{u}$ is the velocity, $p$ is the pressure, $\rho$ is the density and $\vec{e}_n$ is the last vector in the canonical base. This paper focuses in the 2D case in the situation in which we have two fluids of constants densities with same viscosity, the denser fluid is at the bottom and the surface tension is ignored. The main case of interest is where the interface is formed between water and oil \cite{muskat1934}. To write an equation for the interface we consider the regime where it can be described by the graph of a function 
\begin{equation*}
\Sigma(t) = \left\{(x_1, f(x_1,t))\in \R^2:  x_1\in\R\right\},
\end{equation*}
and consequently the density can be written as
\begin{equation*}
\rho(x,t) = \begin{cases} 	
\rho_1 & ,  x\in \Omega_1(t) := \{(x_1, x_2)\in \R^2: x_2 > f(x_1)\}\\ 
\rho_2 & ,  x\in \Omega_2(t) := \R^2 \setminus \Omega_1(t). 
\end{cases} 
\end{equation*}
Under these assumptions the initial value problem for the evolution of the interface is given by (see \cite{Constantin2013})
\begin{equation}\label{muskat_non_periodic}
\begin{cases}
{\displaystyle \partial_t f + \frac{\rho_2-\rho_1}{2}\Lambda f = -\frac{\rho_2-\rho_1}{2\pi}p.v.\int_\R \frac{\partial_x \delta_\alpha f(x)}{\alpha} \frac{(\delta_\alpha f(x))^2}{\alpha^2 + (\delta_\alpha f(x))^2}d\alpha},\\
f(x,0) = f_0(x), \quad x \in \R, 
\end{cases}
\end{equation}
where $\rho_2 > \rho_1$, $\hat{f}(\xi) = \int_\R e^{-2\pi i x \xi} f(x) dx $, $\mathcal{F}(\Lambda f) = 2\pi |\xi| \hat{f}$, $\delta_\alpha f(x) = f(x) - f(x-\alpha)$ and the principal value is taken at zero or infinity if needed. Additionally, without loss of generality it is assumed that $\frac{\rho_2-\rho_1}{2}=1$. 

To study the well-posedness of problem \eqref{muskat_non_periodic} it is useful to consider the following family of homogeneous Besov-type spaces. Note that because \eqref{muskat_non_periodic} is invariant under addition of constants it is convenient to study the problem in homogeneous spaces.
\begin{definition}[The $\dot{\mathcal{F}}^{s,p}_q$ norm]
For $k\in \Z$, we consider the annulus $C_k = \{x\in \R: 2^{k}\leq |x| < 2^{k+1}\}$ and for $s\in\R$, $p\geq 1$, $q\geq 1$ we consider the norm
\begin{equation}\label{definition_norm_non_periodic}
\|f\|_{\dot{\mathcal{F}}^{s,p}_q} = \left(\sum_{k\in \Z}\left(\int_{C_k} |\xi|^{sp} |\hat{f}|^p d\xi\right)^{q/p}\right)^{1/q}, \hspace{0.5cm}f\in C_c^{\infty}(\R),
\end{equation}
and the space $\dot{\mathcal{F}}^{s,p}_q(\Omega)$ is defined as the closure of $C_c^{\infty}(\Omega)$ with respect to the $\dot{\mathcal{F}}^{s,p}_q(\Omega)$ norm. A notation that is sometimes used is $\dot{\mathcal{F}}^{s,p} := \dot{\mathcal{F}}^{s,p}_1$ and $\dot{\mathcal{F}}^{s} := \dot{\mathcal{F}}^{s,1}_1$. Note that this family of spaces contain the Wiener algebra  $\mathbb{A}^1 = \dot{\mathcal{F}}^{1,1}_1$.
\end{definition}

The Muskat equation satisfy the following scaling property: let $f(x,t)$ be a solution of \eqref{muskat_non_periodic}, then $f_\lambda(x,t) = \frac{1}{\lambda}f(\lambda x, \lambda t)$ is also a solution of \eqref{muskat_non_periodic} for any $\lambda >0$. Spaces whose norm is preserved under this scaling are called critical spaces. When considering a family of spaces, we say that spaces that are more regular than the critical ones are subcritical and the ones that are less regular are called supercritical. Relevant critical spaces encountered while studying the Muskat problem in $\R^{d+1}$  ($d$-dimensional interface) are the homogeneous spaces $\dot{C}^1$, $\dot{\mathcal{F}}^{1+d \frac{p-1}{p},p}_q$, $ \dot{W}^{1+d/p,p}$  and $\dot{B}^{1+d/p}_{p,q}$.

\subsection{Main result}

Consider a sequence of  approximations of the Muskat equation \eqref{muskat_non_periodic} obtained by considering the Taylor expansion of order $\ell$ of the nonlinear term.
\begin{definition}
A function $f$ is said to be the solution of the truncation or order $\ell$ of the Muskat problem if 
\begin{equation}\label{muskat_problem_truncated}
\begin{cases}
\partial_t f + \Lambda f  =  \sum_{k= 1}^\ell T_k f &,  (x,t) \in \R \times [0,T],\\
f(0) = f_0 &,  x\in \R,
\end{cases}
\end{equation}
where the function $T_k$ is given by
\begin{align}
T_{k} f &=  (-1)^k\frac{1}{\pi}p.v. \int_\R \frac{\partial_x \delta_\alpha f(x)}{\alpha} \left(\frac{\delta_\alpha f(x)}{\alpha}\right)^{2k} d\alpha \nonumber \\
&= \frac{(-1)^k}{\pi(2k+1)} p.v. \int_\R \partial_x \left(\frac{\delta_\alpha f(x)}{\alpha}\right)^{2k+1} d\alpha. \label{terms_tk_expasion_taylor_muskat_real_line}
\end{align}
\end{definition}
To study the ill posedness we follow the strategy in \cite{Germain2008,Iwabuchi2016} and prove that the solution map of the second Picard's iterate of  \eqref{muskat_problem_truncated} is discontinuous at the origin. In many situations the Picard's iteration it is expected to converge to a solution of the problem, but in the case of supercritical spaces this is a difficult question in general, this is why in this paper we only focus our attention to the evolution of the second Picard's iteration for some highly oscillatory initial data. 

The Picard's iterations $\{f^{(n)}\}_{n \geq 0}$ of equation \eqref{muskat_problem_truncated} are defined in the following way: set $f^{(0)}  = 0$ and for $n\geq 1$ define recursively
\begin{equation*}
\partial_t f^{(n)} + \Lambda f^{(n)} = \sum_{k= 1}^\ell T_k f^{(n-1)}, ~~f^{(n)}(0) = f_0,
\end{equation*}
from this definition the first two Picard's iterations are given by
\begin{equation*}
\partial_t f^{(1)} + \Lambda f^{(1)}  = 0 ,~~ f^{(1)}(0) = f_0 \Rightarrow ~ f^{(1)} = e^{-t\Lambda} f_0,
\end{equation*}
\begin{equation*}
\partial_t f^{(2)} + \Lambda f^{(2)} = \sum_{k= 1}^\ell T_k e^{-t\Lambda} f_0, ~ f^{(2)}(0) = f_0.
\end{equation*}
Our goal is to show that by choosing an appropriate initial condition we can make the term $f^{(2)}$ arbitrarily large compared with the initial data after an arbitrarily short time. 

\begin{definition}
Let $\ell \in \N$, $p\geq 1$, $q\geq 1$, $T>0$. Given $\varphi\in \dot{\mathcal{F}}^{\frac{2\ell-1}{2\ell+1},p}_q(\R)$ the second Picard's iterate of \eqref{muskat_problem_truncated} is a function  $f \in C([0,T];\dot{\mathcal{F}}^{\frac{2\ell-1}{2\ell+1},p}_q(\R))$ that satisfy 
\begin{equation}\label{muskat_problem_truncated_second_iteration}
\begin{cases}
\partial_t f + \Lambda f = \sum_{k= 1}^\ell T_k e^{-t\Lambda} \varphi&, (x,t)\in\R\times [0,T],  \\
f(x,0) = \varphi(x) &, x \in \R,
\end{cases}
\end{equation}
in the weak sense.
\end{definition}
We can now state the main result of this paper.

\begin{theorem}[Norm inflation for truncated system]\label{thm_norm_inflation_finite_expansion_muskat} Let $\ell\in \N$, $T>0$, $R > 0$, $p\geq 1$,  $q> 2\ell+1$. Then there exists some $\tilde{t}\in (0,T)$, and a function $\varphi_0 \in \dot{\mathcal{F}}^{\frac{2\ell-1}{2\ell+1},p}_q(\R)$ such that the solution $f \in C([0,T];\dot{\mathcal{F}}^{\frac{2\ell-1}{2\ell+1},p}_q(\R))$ of \eqref{muskat_problem_truncated_second_iteration} with  $\varphi = \varphi_0$ satisfy
\begin{equation*}
\|f(0)\|_{\dot{\mathcal{F}}^{\frac{2\ell-1}{2\ell+1},p}_q} < 1/R \text{ ~~and~~ }\|f(\tilde{t})\|_{\dot{\mathcal{F}}^{\frac{2\ell-1}{2\ell+1},p}_q} >R.
\end{equation*}
\end{theorem}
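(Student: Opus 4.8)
The plan is to construct $\varphi_0$ as a superposition of highly oscillatory wave packets and show that the nonlinear term $\sum_k T_k e^{-t\Lambda}\varphi_0$, when integrated against the Duhamel semigroup, produces a low-frequency (or resonant-frequency) output whose $\dot{\mathcal{F}}^{s,p}_q$ norm with $s = \frac{2\ell-1}{2\ell+1}$ is enormous compared with the norm of $\varphi_0$. The key scaling observation is that the dominant term in the truncated system is $T_\ell$, which is (up to constants) a $(2\ell+1)$-linear operator of the form $\partial_x \bigl(\tfrac{\delta_\alpha f}{\alpha}\bigr)^{2\ell+1}$; in Fourier variables this is a convolution of $2\ell+1$ copies of $\hat f$ against a multiplier that is bounded (homogeneous of degree $0$ in the combined frequency once the $\partial_x$ and the $\alpha$-integration are accounted for). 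Consequently, if $\hat{\varphi_0}$ is concentrated near a single high frequency $N$ with a small amplitude $a$, the output of $T_\ell$ is concentrated near frequency $(2\ell+1)N$ with amplitude roughly $a^{2\ell+1} N^{2\ell+1}$ (the $N^{2\ell+1}$ coming from the $2\ell$ difference quotients and one derivative). Measuring this in $\dot{\mathcal{F}}^{s,p}_q$ and comparing with $\|\varphi_0\| \sim a N^{s}$ shows that one can send the ratio to infinity by taking $N \to \infty$ with $a$ chosen so that $aN^{s}$ is small — this is exactly where the supercriticality $s < 1$ is used.

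\textbf{Step 1: Reduction to the leading term.} First I would argue that among $T_1 f^{(1)},\dots,T_\ell f^{(1)}$ it suffices to estimate $T_\ell e^{-t\Lambda}\varphi_0$ and control the lower-order terms $T_k$, $k<\ell$, as perturbations. For a single-frequency-bump initial datum the $k$-th term lives near frequency $(2k+1)N$ with amplitude $\sim a^{2k+1}N^{2k+1}$; since the $T_\ell$ term dominates in $N$ when $a$ is chosen appropriately (or one can separate the output frequency bands so there is no cancellation), the lower terms only contribute a lower-order correction. I would make this precise by first establishing a multiplier/boundedness lemma: $\widehat{T_k f}(\xi) = \xi \int m_k(\xi,\eta_1,\dots) \prod \hat f(\eta_j)\,d\eta$ with $|m_k|$ bounded, so that $T_k$ maps appropriate weighted $L^1$-type spaces into one another with the expected $N$-power loss.

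\textbf{Step 2: Choice of initial data.} I would take $\hat{\varphi_0}$ to be (a smooth version of) $a\bigl(\mathbf 1_{[N,N+1]} + \mathbf 1_{[-N-1,-N]}\bigr)$ — or possibly a finite sum of $M$ such bumps at well-separated frequencies $N_1 \ll N_2 \ll \cdots$ to exploit the $\ell^q$ summation in the $\dot{\mathcal{F}}^{s,p}_q$ norm when $q > 2\ell+1$ (the hypothesis $q > 2\ell+1$ strongly suggests that the gain comes from having the output spread over many more dyadic shells than the input, so that $\|\varphi_0\|_{\dot{\mathcal{F}}^{s,p}_q}$ is the $\ell^q$ norm of $M$ equal small pieces $\sim M^{1/q}\varepsilon$ while the output is a single huge piece, or vice versa). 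Then $\|\varphi_0\|_{\dot{\mathcal{F}}^{s,p}_q} \sim a N^{s}$ (single bump) which I make $< 1/R$.

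\textbf{Step 3: Computing the second iterate and extracting the lower bound.} Writing $f(t) = e^{-t\Lambda}\varphi_0 + \int_0^t e^{-(t-\tau)\Lambda}\sum_k T_k e^{-\tau\Lambda}\varphi_0\,d\tau$, the linear part has norm $\le \|\varphi_0\| < 1/R$ for all $t$, so everything reduces to bounding below the Duhamel term at a well-chosen time $\tilde t$. Plugging in the single-bump $\varphi_0$, the $T_\ell$-contribution to $\hat f(\tilde t,\xi)$ near $\xi \approx (2\ell+1)N$ is, after carrying out the $\tau$-integral of $e^{-(\tilde t - \tau)|\xi|} e^{-\tau(|\eta_1|+\cdots)}$, of size $\sim a^{2\ell+1} N^{2\ell+1} \cdot \tilde t$ for $\tilde t$ small enough that the exponential factors are $\approx 1$ on the relevant frequency range (this forces a constraint like $\tilde t N \lesssim 1$, but since $N$ can be taken as large as we like and $\tilde t$ as small as we like independently, this is not an obstruction — one just needs $\tilde t < T$). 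Measuring in $\dot{\mathcal{F}}^{s,p}_q$: this piece sits in the shell near $(2\ell+1)N$ and contributes $\gtrsim \tilde t\, a^{2\ell+1} N^{2\ell+1} \cdot N^{s} = \tilde t\, a^{2\ell+1} N^{2\ell+1+s}$. Comparing with the requirement $aN^s < 1/R$: write $a = \varepsilon N^{-s}$ with $\varepsilon \to 0$; the output is then $\gtrsim \tilde t\, \varepsilon^{2\ell+1} N^{-(2\ell+1)s} N^{2\ell+1+s} = \tilde t\, \varepsilon^{2\ell+1} N^{(2\ell+1)(1-s) + \cdots}$, and since $s = \frac{2\ell-1}{2\ell+1} < 1$ the exponent of $N$ is strictly positive, so choosing $N$ large (after fixing $\varepsilon$ small and $\tilde t$) makes the output exceed $R$.

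\textbf{The main obstacle} I anticipate is twofold. First, justifying that the $\tau$-integral genuinely produces a factor growing like $\tilde t$ (rather than being killed by the dissipative semigroup or by oscillation) requires care: one must check that the phase $e^{-(\tilde t-\tau)|\xi|-\tau\sum|\eta_j|}$ does not cause cancellation — it doesn't, since it is real and positive, but one must also verify the $(2\ell+1)$-linear multiplier $m_\ell$ does not vanish or oscillate so as to kill the main term on the support of the bumps; this is where an explicit lower bound on the kernel $\int_\R \partial_x(\delta_\alpha f/\alpha)^{2\ell+1}\,d\alpha$ evaluated on $\varphi_0$ is needed, and the cleanest route is to compute $\widehat{T_\ell\varphi_0}$ directly as an iterated convolution and show the integrand has a definite sign on a set of positive measure near the output frequency. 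Second — and this is the real subtlety signalled by the hypothesis $q > 2\ell+1$ — one must ensure the comparison between the input norm (an $\ell^q$ sum) and the output norm is sharp; if a single bump does not already give the result for all admissible $q$, the fix is to use $M$ separated bumps and note that the $T_\ell$ output then contains cross terms at frequencies like $N_{i_1}+\cdots+N_{i_{2\ell+1}}$, all distinct and each of size $\sim \prod a_{i_j} N_{i_j}^{\cdots}$, so the output $\ell^q$ norm picks up $\sim M^{(2\ell+1)/q'}$-type gains over the input, and the constraint $q > 2\ell+1$ is precisely what makes this gain win. Making the bump choice and the multiple-frequency bookkeeping precise, while keeping the lower-order terms $T_k$ ($k<\ell$) genuinely negligible (they could in principle interfere at the output frequencies, so the frequencies $N_i$ should be chosen lacunary with large enough ratios to separate all the relevant sums), is the technical heart of the argument.
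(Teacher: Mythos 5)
There is a genuine gap, and it sits at the heart of your Step 3. The ``main term'' you compute --- the output of $T_\ell e^{-\tau\Lambda}\varphi_0$ at frequency $\approx(2\ell+1)N$ with amplitude $a^{2\ell+1}N^{2\ell+1}$ --- is identically zero. In Fourier variables the $(2\ell+1)$-linear multiplier is $\Gamma_{2\ell+1}(A_1,\dots,A_{2\ell+1})=i\int_\R\prod_j\alpha^{-1}(1-e^{-2\pi i\alpha A_j})\,d\alpha$, and this vanishes whenever all the $A_j$ have the same sign (Lemma \ref{properties_gamma_higher_dimension} iii): the integral is a convolution of indicator functions evaluated at the endpoint of its support. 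Reaching output frequency $(2\ell+1)N$ from data supported at $\pm N$ forces all $2\ell+1$ input frequencies to equal $+N$, so your main term is annihilated by this null structure. The surviving mixed-sign interactions land at frequencies $|2j-2\ell-1|N\le(2\ell-1)N$ and require a lower bound on $\Gamma_{2\ell+1}(1,\dots,1,-1,\dots,-1)$, which you flag as a worry but do not establish. The paper avoids this issue by building each packet from a \emph{pair} of frequencies $(k_j,\,2\ell k_j+M)$, engineered so that the resonant combination $2\ell\cdot(\mp k_j)+(\pm(2\ell k_j+M))=\pm M$ sends the output to the \emph{fixed low} frequency $M$, where $\Gamma_{2\ell+1}(-k_j,\dots,-k_j,2\ell k_j+M)=(-1)^{\ell+1}(2\pi)^{2\ell+1}k_j^{2\ell}+O(k_j^{2\ell-1})$ is computed explicitly and is nonzero (Lemma \ref{lower_bound_main_term_muskat_higher_order}).

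Your reading of the hypothesis $q>2\ell+1$ is also inverted, which signals that the key idea is missing rather than merely unproved. In the paper the resonant output of \emph{every} packet lands in the \emph{same} dyadic shell near $\pm M$, so the lower bound for the output is the $\ell^1$ sum $\sum_j\gamma_j^{2\ell+1}k_j^{2\ell-1}=\sum_j j^{-(1+\varepsilon)(2\ell+1)/q}$ over $j\in[N,(1+\delta)N]$, which diverges precisely because $q>(1+\varepsilon)(2\ell+1)$, while the input occupies well-separated shells and its norm is the $\ell^q$ sum $(\sum_j j^{-(1+\varepsilon)})^{1/q}\to0$. A single packet gives exponent exactly zero in $k_j$ (the space $\dot{\mathcal F}^{\frac{2\ell-1}{2\ell+1},p}_q$ is critical for this resonant mechanism), so the entire gain is the $\ell^1$-versus-$\ell^q$ discrepancy with the output concentrated in \emph{one} shell --- the opposite of your suggestion that the output spreads over many more shells than the input. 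Relatedly, the paper evaluates at a fixed time $\tilde T$ independent of $N$ with $\tilde TM<1<\tilde Tk_N$, so that all high-frequency outputs are exponentially damped by $e^{-2\pi\tilde T|\xi|}$ and only the low-frequency resonance survives; the terms your scheme relies on, living at frequency $\gtrsim N$ with $\tilde t\sim1/N$, are exactly those the paper estimates and discards as errors ($HF$, $HF_1$ in Lemmas \ref{lemma_higher_frequency_muskat_higher_order} and \ref{upper_bound_high_freq_main_term_higher_order_muskat}).
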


To make precise what we mean by discontinuity of the solution map for the approximation of the Muskat problem given by equation \eqref{muskat_problem_truncated_second_iteration} at the origin we can consider the operator
\begin{equation}\label{solution_map}
L : \dot{\mathcal{F}}^{\frac{2\ell-1}{2\ell+1},p}_q(\R) \to C([0,T];\dot{\mathcal{F}}^{\frac{2\ell-1}{2\ell+1},p}_q(\R)),
\end{equation}
that takes a function $\varphi_0 \in \dot{\mathcal{F}}^{\frac{2\ell-1}{2\ell+1},p}_q(\R)$ and return the solution 
$$f\in C([0,T];\dot{\mathcal{F}}^{\frac{2\ell-1}{2\ell+1},p}_q(\R)),$$ 
of \eqref{muskat_problem_truncated_second_iteration} with  $f(t,0) = \varphi_0$. Theorem \ref{thm_norm_inflation_finite_expansion_muskat} tell us that given some arbitrarily small $T>0$ it is possible to find a decreasing sequence of times and function $\{(t_N, \varphi_N)\}_{N=1}^\infty$ with $t_N< T$, such that the sequence $\{f_N  = L \varphi_N\}_{N=1}^\infty$ satisfies
\begin{equation*}
\|\varphi_N\|_{\dot{\mathcal{F}}^{\frac{2\ell-1}{2\ell+1},p}_q} \leq \frac{1}{N} \text{ and } \|f_N(t_N)\|_{\dot{\mathcal{F}}^{\frac{2\ell-1}{2\ell+1},p}_q} > N,
\end{equation*}
which implies that the solution map $L : \dot{\mathcal{F}}^{\frac{2\ell-1}{2\ell+1},p}_q \to C([0,T];\dot{\mathcal{F}}^{\frac{2\ell-1}{2\ell+1},p}_q)$ is not continuous around $\varphi = 0$ in $\dot{\mathcal{F}}^{\frac{2\ell-1}{2\ell+1},p}_q(\R)$, for $p\geq 1$ , $q > 2\ell+1$.  In particular we can look at the sequence of spaces $\dot{\mathcal{F}}^{\frac{2\ell-1}{2\ell+1},1}_{2\ell+1+\varepsilon}(\R)$ as a family approaching the critical space $\dot{\mathcal{F}}^{1,1}_\infty$ as we increase the order of the approximation i.e.,  $\ell \to \infty$. 

\subsection{Choice of initial data}\label{subsection:initial_data}
A crucial ingredient in the proof of Theorem \ref{thm_norm_inflation_finite_expansion_muskat} is the choice of initial data and its dependence on various parameters. Given $\ell \in \N$, $p \geq 1$, $q> 2\ell+1$, $T > 0$,  
$0<\varepsilon< \frac{q}{2\ell + 1} - 1$, and $M >2\ell+2$ we want to construct a sequence of functions $\varphi^{(N)} \in \dot{\mathcal{F}}^{\frac{2\ell-1}{2\ell+1},p}_q (\Omega)$, $N\in \N$ with small norm such that the solution of \eqref{muskat_problem_truncated_second_iteration} becomes large after a short time $\tilde{t}<T$. The structure of the initial data considered in this work is inspired by the works of Bourgain-Pavlovic \cite{Bourgain2008} and Iwabuchi-Ogawa \cite{Iwabuchi2016}. For each $N\in \N$ we define the real-valued function $\varphi^{(N)}\in L^2(\R)$ by
\begin{equation}\label{intial_condition_higher_order_muskat}
\hat{\varphi}^{(N)}(\xi) = \sum\limits_{j=N}^{(1+\delta) N} \gamma_j \left(P_{k_j}(\xi) + P_{2 \ell k_j + M}(\xi)\right), ~~\xi \in \R,
\end{equation}
where $P_A(\xi)  = \chi(\xi-A) + \chi(\xi+A)$ for $A\in \R$ and $\chi(\xi)$ denotes the characteristic function of the interval $[-1,1]$. Another notation we will use later is $\chi_A(\xi) = \chi(\xi-A)$. 
The key of this construction is the choice of the sequences $\{k_j\}_j$ and $\{\gamma_j\}_j$. Here $\{k_j\}_{j\geq 0}$ is a sequence of positive integers that grow very fast and $\{\gamma_j\}_{j\geq 0}$ a sequence of positive numbers that depend on $\{k_j\}_{j\geq 0}$. The precise growth requirements are given by the following conditions
\begin{align}
(a)& ~k_{j+1} > \ell^2 k_j + M \text{ if } \ell > 1 \text{ and }~k_{j+1} > 2 k_j + M \text{ for }\ell = 1,\label{growth_condition_kn_1} \\
(b)& ~\sum_{j=N}^{(1+\delta)N} \frac{1}{j^{\frac{2\ell-1}{2\ell+1} \frac{1+\varepsilon}{q}}} < \frac{1}{N}  k_N^{\frac{1}{2\ell+1}}, \label{growth_condition_kn_2}\\
(c)& ~ (2\ell+1)M < k_0/2, \label{growth_condition_kn_3}\\
(d)& ~\gamma_j = \frac{1}{ j^{\frac{1+\varepsilon}{q}}} k_j^{-\frac{2\ell-1}{2\ell+1}} \text{ for } j\in\N. \label{definition_gammaj}
\end{align}

\subsection{Summary of Known Results}

In the Rayleigh-Taylor (RT) unstable case ($\rho_1 > \rho_2$) the problem is known to be ill-posed in the Sobolev spaces $H^s$ for $s > 3/2$ in 2D and 3D \cite{Cordoba2007,Cordoba2008}. 

The parameterization of the problem as the graph of a function can hide some nuances of the equation. In a series of papers \cite{Castro2011,Castro2012,Castro2013} the authors study this phenomenon in 2D and obtain that in the RT stable case ($\rho_2 > \rho_1$) some solutions can turn and then go back to the stable regime and others can turn and later stop being $C^4$.

For short time existence in the RT stable case ($\rho_1 < \rho_2$), the 2D problem without surface tension for initial data Sobolev spaces has been considered in many authors \cite{Cordoba2007,Cordoba2013,Constantin2015,Cheng2016,Matioc2017,Matioc2019,Alazard2020,Nguyen2020,Alazard2021} for initial data in $\dot{W}^{1,\infty} \cap H^{s}$ ($s\geq 3/2$) and \cite{chen2022} for initial data in $\dot{C}^1 \cap L^2$. When viscosity jump is allowed  \cite{Gancedo2017} in 2D and 3D for initial data in $L^2\cap\mathcal{F}^{1,1}$ with small $\dot{\mathcal{F}}^{1,1}$ norm and \cite{ABELS2022} for the 2D problem with initial data in the sub-critical space $W^{s,p}$ ($1+1/p<s<2$, $1<p<\infty$). In 3D without viscosity jump \cite{Cordoba2007,Cordoba2013} for initial data in Sobolev spaces $H^s$ ($s \geq 4$) and \cite{chen2022} for initial data in $\dot{C}^1 \cap L^2$. For the 2D problem with surface tension \cite{Matioc2017} for initial data in $H^{s}(\T)$ ($2<s<3$).

For global in time results in the RT stable case, the 2D problem without viscosity jump was considered in \cite{Constantin2015} for initial data in $W^{2,p}(\R)\cap W^{1,\infty}\cap L^2$ ($1<p\leq \infty$) with small slope, \cite{Patel2017} for initial data $f_0\in H^\ell$ ($\ell\geq 3$) with  $\|f_0\|_{\dot{\mathcal{F}}^{1,1}} < k_0$ (they call it medium size data as they provide an explicit lower bound for the size of the constant), \cite{Cheng2016} for initial data in $H^2$ with small $H^{3/2+\varepsilon}$ norm. \cite{Constantin2013,Constantin2016} for initial data in $H^3$ ($s\geq 3$) with medium size $\dot{\mathcal{F}}^{1,1}$ norm, \cite{Matioc2019} for initial data in $H^s$ ($3/2<s<2$) with medium size $\dot{\mathcal{F}}^{1,1}$ norm. In \cite{Gancedo2019} for the problem with bubble geometry for $\dot{\mathcal{F}}^{1,1}\cap \mathcal{F}^{0,1}$ with medium size $\dot{\mathcal{F}}^{1,1}$ norm. In 3D
\cite{Constantin2013,Constantin2016} for initial data in $H^s$ ($s\geq 4$)  with $\|\grad f_0\|_{L^\infty} < 1/3$, \cite{Cameron2020} for unbounded initial data with medium size slope and slow growth at infinity. In 2D \cite{alonso-oran2021} when permeability jump is allowed global existence for initial data $W^{1,\infty}$ with small slope.

Many of the previously mentioned results require some control on the size of the slope (because $\|u'\|_{L^{\infty}}\leq \|u\|_{\dot{\mathcal{F}}^{1,1}}$), but does not seem to be a strict requirement to obtain global in time results, for instance in \cite{Deng2017} global existence for monotone initial data with finite limits at infinity and \cite{Cameron2019} for initial data in $W^{1,\infty}$ with $(\sup f'_0)(\sup - f'_0) < 1$. \cite{Cordoba2018} for the 2D problem with initial data in $H^{5/2} \cap H^{3/2}$ and small $\dot{H}^{3/2}$ norm. \cite{gancedo_global_2022} for the 3D problem with initial data in $\dot{W}^{1,\infty}\cap \dot{H}^{2}$ with small $\dot{H}^2$ norm where the required size depend on the maximum size of the slope. \cite{Alazard2021} for the 2D problem with small initial data in $\dot{W}^{1,\infty}\cap H^{3/2}$. \cite{Nguyen2022} for small initial data in the critical space $\dot{B}^{1,\infty}_1$. For the one phase problem ($\rho_1 = 0$) global existence of viscosity solutions for initial data in $W^{1,\infty}(\mathbb{T})$ is obtained in \cite{dong2021}. In \cite{Alazard2021b} global existence for the 2D problem in Sobolev-type spaces with logarithmic weight $\mathcal{H}^{3/2,1/3}$ where $H^{3/2+\varepsilon}\subset\mathcal{H}^{3/2,1/3} \subset H^{3/2}$, which allow non-Lipschitz initial data to be considered.

The approach of studying ill-posedness by looking at special low regularity initial data has been successfully applied to other fluid problems. For the 3D Navier-Stokes equation (NSE) the norm inflation in the critical space $\dot{B}^{-1,\infty}_\infty$ and \cite{Wang2015} for the $\dot{B}^{-1,\infty}_q$ case. In \cite{Germain2008} discontinuity at the origin for the second Picard's iterate of NSE is established in $B^{-1,\infty}_{q}$ ($q>2$, $d\geq 2$).  In \cite{Cheskidov2012} for discontinuity of the solution map in a critical space for the 3D NSE with fractional diffusion. \cite{Iwabuchi2022} discontinuity of the solution map for compressible NSE in $\dot{B}^{-1/2,2d}_\sigma$ ($1\leq \sigma \leq \infty$, $d\geq 2$). \cite{Cheskidov2010} discontinuity of the solution map at the origin in a periodic domain for Euler in $B^{s,r}_\infty$ ($s > 0$ if $r>2$ and $s>d(r/2-1)$ if $1\leq r \leq 2$) which includes the critical space $B^{1/3,3}_\infty$. \cite{Bourgain2015} for Euler with $d\geq 2$, a small perturbation of a initial data in the critical space $H^{d/2+1}$ exhibits norm inflation. \cite{Iwabuchi2016} norm inflation for a drift diffusion system in $\dot{B}^{-2+n/p,p}_\sigma$ ($2n<p \leq \infty$ and $1 \leq \sigma \leq \infty$, or $p=2d$ and $2 < \sigma \leq \infty$). \cite{Misioek2016} for 2D Euler the discontinuity of the solution map in $C^1(\R)$ and $B^{1,\infty}_1(\R)$. \cite{Li2020} norm inflation for the 2D viscous shallow water in Besov spaces. \cite{Li2020b} norm inflation for Boussinesq in Besov spaces. \cite{Tsurumi2019} discontinuity of the solution map for a stationary NSE in Besov spaces. \cite{Byers2006} for ill-possedness of the Camassa-Holm the problem is in the supercritical spaces $H^s$ ($1<s<3/2$).

\section{A norm inflation result}\label{subsection:inflation_result}

In order to prove Theorem \ref{thm_norm_inflation_finite_expansion_muskat} we will consider the following intermediate result that will give us conditions for the discontinuity of the solution map \eqref{solution_map}. Using the linearity  of \eqref{muskat_problem_truncated_second_iteration} we can consider the following decomposition 
\begin{equation}\label{muskat_problem_truncated_second_iteration_decomposition}
\begin{cases}
\partial_t f_{0} + \Lambda f_{0} = 0 &,  (x,t)\in\R\times [0,T], \\
f_0(0) = \varphi &,  x \in \R, \\
\partial_t f_{k} + \Lambda f_{k} = T_k f_0 &, \text{ for } 1\leq k \leq \ell,  (x,t)\in\R\times [0,T],  \\
f_{k}(x,0) = 0  &, \text{ for } 1\leq k \leq \ell,  x \in \R. 
\end{cases}
\end{equation}
It is easy to see that if $(f_0, \cdots, f_{\ell})$ is a solution of the system \eqref{muskat_problem_truncated_second_iteration_decomposition} then the function $f = \sum_{j=0}^\ell f_{j}$ is a solution of  \eqref{muskat_problem_truncated_second_iteration}. Another representation that will be useful later is obtained applying the Duhamel's formula to \eqref{muskat_problem_truncated_second_iteration_decomposition}
\begin{equation}\label{equation_toy_system_muskat_higher_order}
f_{j}(t) = \int_0^t e^{-(t-\tau)\Lambda} T_j(f_0) d\tau.
\end{equation}

\begin{theorem}[Discontinuity of solution map]\label{thm_inflation_higher_order_muskat}
Let $\ell\in \N$, $p\geq 1$ and $q> 2\ell+1$. Suppose that there exists $\bar{T}>0$ and $\tilde{N}\in \N$ such that for each $N \geq \tilde{N}$ there exist a function $\varphi_0^{(N)}\in \dot{\mathcal{F}}_q^{\frac{2\ell-1}{2\ell+1},p}(\R)$ such that for all $0<t\leq \bar{T}$ the solution $f_{k}^{(N)}\in C(0,\bar{T}; \dot{\mathcal{F}}^{\frac{2\ell-1}{2\ell+1},p}_q(\R))$, $k = 0, \cdots, \ell$ of \eqref{muskat_problem_truncated_second_iteration_decomposition} with $\varphi = \varphi_0^{(N)}$  satisfy that for some $0 < \varepsilon < \frac{q}{2\ell + 1} - 1$ and some sequences $\{k_j\}_{j\geq 0}$, $\{\gamma_j\}_{j\geq 0}$ satisfying  \eqref{growth_condition_kn_1}, \eqref{growth_condition_kn_2}, \eqref{growth_condition_kn_3} and \eqref{definition_gammaj} we have the following
\begin{itemize}
\item[$i)$] Each $\varphi_0^{(N)}$ satisfies
\begin{equation*}
\left\|\varphi_0^{(N)}\right\|_{\dot{\mathcal{F}}_q^{\frac{2\ell-1}{2\ell+1},p}}  \leq  C \left( \sum_{j=N}^{(1+\delta)N} \gamma_j^q k_j^{\frac{2\ell-1}{2\ell+1} q}\right)^{1/q},
\end{equation*}
for some $C = C(\ell, p, q) >0$.
\item[$ii)$] For $1\leq k < \ell$, $f_{k}$ satisfies
\begin{multline*}
\left\|f_{k}^{(N)}(t)\right\|_{\dot{\mathcal{F}}^{\frac{2\ell-1}{2\ell+1},p}_q}
 \leq \frac{C}{t^{2}}  \left(\sum_{j=N}^{(1+\delta)N}  \gamma_j^{(2k+1)q}  k_j^{(2k-2+\frac{2\ell-1}{2\ell+1})q} \right)^{1/q}\\
+\frac{C}{t^{\frac{2\ell-1}{2\ell+1}+2} k_N}  \left(\sum_{j=N}^{(1+\delta)N}\gamma_j  k_j^{\frac{2k-1}{2k+1}}\right)^{2k+1},
\end{multline*}
for some $C = C(k,\ell,p,q) >0$.
\item[$iii)$] For $k = \ell$
\begin{multline*}
\left\|f_{\ell}^{(N)}(t)\right\|_{\dot{\mathcal{F}}^{\frac{2\ell-1}{2\ell+1},p}_q} \geq C \sum\limits_{j=N}^{(1+\delta)N} \gamma_j^{2\ell+1} k_j^{2\ell-1} -  \frac{C}{t^{\frac{2\ell-1}{2\ell+1}+2}  k_N} \sum_{j=N}^{(1+\delta)N} \gamma_j^{2\ell+1}  k_j^{2\ell-1} \\
-  \frac{C}{t^{\frac{2\ell-1}{2\ell+1}+2}  k_N} \left(\sum_{j=N}^{(1+\delta)N} \gamma_j  k_j^{\frac{2\ell-1}{2\ell+1}}\right)^{2\ell+1},
\end{multline*}
for some $C = C(\ell,p,q) >0$.
\end{itemize}
Then, given $R>0$ and $T>0$ there exists $0<\tilde{T} \leq T$ and $\varphi_0 \in \dot{\mathcal{F}}^{\frac{2\ell-1}{2\ell+1},p}_q(\R)$ such that the solution $f$ of \eqref{muskat_problem_truncated_second_iteration} with $\varphi = \varphi_0$ satisfies
\begin{equation*}
\|f(\tilde{T})\|_{\dot{\mathcal{F}}^{\frac{2\ell-1}{2\ell+1},p}_q} > R \text{ and } \|\varphi_0\|_{\dot{\mathcal{F}}^{\frac{2\ell-1}{2\ell+1},p}_q}< 1/R.
\end{equation*}
\end{theorem}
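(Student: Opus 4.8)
The plan is to feed the three estimates $i)$–$iii)$, together with the explicit choice \eqref{definition_gammaj}, into a direct comparison argument: substituting $\gamma_j=j^{-\frac{1+\varepsilon}{q}}k_j^{-\frac{2\ell-1}{2\ell+1}}$ makes every weighted $k_j$-sum in the statement collapse to a pure power sum in $j$. Indeed $\gamma_j^{q}k_j^{\frac{2\ell-1}{2\ell+1}q}=j^{-(1+\varepsilon)}$, $\;\gamma_j^{2\ell+1}k_j^{2\ell-1}=j^{-\beta}$ with $\beta:=\frac{(2\ell+1)(1+\varepsilon)}{q}$, $\;\gamma_j k_j^{\frac{2\ell-1}{2\ell+1}}=j^{-\frac{1+\varepsilon}{q}}$, while for $1\le k<\ell$ one has $\gamma_j k_j^{\frac{2k-1}{2k+1}}=j^{-\frac{1+\varepsilon}{q}}k_j^{\frac{2k-1}{2k+1}-\frac{2\ell-1}{2\ell+1}}$ and $\gamma_j^{2k+1}k_j^{2k-2+\frac{2\ell-1}{2\ell+1}}=j^{-(1+\varepsilon)(2k+1)/q}\,k_j^{\frac{4k}{2\ell+1}-2}$. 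Two facts drive everything: the hypothesis $\varepsilon<\frac{q}{2\ell+1}-1$ is exactly $\beta<1$, whence $\sum_{j=N}^{(1+\delta)N}j^{-\beta}\ge \frac{\delta}{(1+\delta)^{\beta}}N^{1-\beta}\to\infty$ (for $N\ge 1/\delta$); and for $k<\ell$ the two exponents $\frac{4k}{2\ell+1}-2$ and $\frac{2k-1}{2k+1}-\frac{2\ell-1}{2\ell+1}$ of $k_j$ are \emph{strictly negative}.

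First fix $\tilde T:=\min(T,\bar T)$ (or $\tfrac12\min(T,\bar T)$ if $\tilde T<T$ is desired), so that the factors $t^{-2}$ and $t^{-(\frac{2\ell-1}{2\ell+1}+2)}$ in $ii)$–$iii)$ become fixed constants; from now on implicit constants may depend on $\tilde T$. By $i)$ and $\sum_{j\ge N}j^{-(1+\varepsilon)}\le C_\varepsilon N^{-\varepsilon}$ we get $\|\varphi_0^{(N)}\|_{\dot{\mathcal{F}}^{\frac{2\ell-1}{2\ell+1},p}_q}\le C N^{-\varepsilon/q}\to 0$. For the lower bound, $iii)$ gives a leading term $C\sum_{j=N}^{(1+\delta)N}\gamma_j^{2\ell+1}k_j^{2\ell-1}=C\sum_{j=N}^{(1+\delta)N}j^{-\beta}\ge c_\delta N^{1-\beta}$, minus two corrections. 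The first is $C\tilde T^{-(\frac{2\ell-1}{2\ell+1}+2)}k_N^{-1}$ times that same sum, negligible since $k_N\to\infty$ by \eqref{growth_condition_kn_1}. For the second, I estimate $\gamma_j k_j^{\frac{2\ell-1}{2\ell+1}}=j^{-\frac{1+\varepsilon}{q}}\le j^{-\frac{2\ell-1}{2\ell+1}\frac{1+\varepsilon}{q}}$ and invoke \eqref{growth_condition_kn_2}, which gives $(\sum_{j=N}^{(1+\delta)N}\gamma_j k_j^{\frac{2\ell-1}{2\ell+1}})^{2\ell+1}\le N^{-(2\ell+1)}k_N$, so the second correction is $\le C N^{-(2\ell+1)}=o(N^{1-\beta})$. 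Hence $\|f_\ell^{(N)}(\tilde T)\|_{\dot{\mathcal{F}}^{\frac{2\ell-1}{2\ell+1},p}_q}\ge c_\delta' N^{1-\beta}$ for $N$ large.

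It remains to check the other summands in $f=f_0+f_1+\dots+f_\ell$ (which solves \eqref{muskat_problem_truncated_second_iteration}) cannot cancel this. Since $e^{-t\Lambda}$ has symbol $e^{-2\pi t|\xi|}\le1$ it is a contraction on $\dot{\mathcal{F}}^{\frac{2\ell-1}{2\ell+1},p}_q$, so $\|f_0^{(N)}(\tilde T)\|\le\|\varphi_0^{(N)}\|\to0$. When $\ell\ge2$, for $1\le k<\ell$ the first term of $ii)$ equals $C(\sum_{j=N}^{(1+\delta)N}j^{-(1+\varepsilon)(2k+1)}k_j^{(\frac{4k}{2\ell+1}-2)q})^{1/q}\le C k_N^{\frac{4k}{2\ell+1}-2}\to0$ (negative exponent, $k_N\to\infty$), and the second term, after $\gamma_j k_j^{\frac{2k-1}{2k+1}}\le j^{-\frac{1+\varepsilon}{q}}\le j^{-\frac{2\ell-1}{2\ell+1}\frac{1+\varepsilon}{q}}$ and \eqref{growth_condition_kn_2}, is $\le C N^{-(2k+1)}k_N^{\frac{2k+1}{2\ell+1}-1}\to0$. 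By the triangle inequality $\|f(\tilde T)\|_{\dot{\mathcal{F}}^{\frac{2\ell-1}{2\ell+1},p}_q}\ge\|f_\ell^{(N)}(\tilde T)\|-\|f_0^{(N)}(\tilde T)\|-\sum_{k=1}^{\ell-1}\|f_k^{(N)}(\tilde T)\|\ge c_\delta'' N^{1-\beta}$ for all sufficiently large $N$, while $\|\varphi_0^{(N)}\|\to0$. Choosing $N\ge\tilde N$ large enough that $\|f(\tilde T)\|>R$ and $\|\varphi_0^{(N)}\|<1/R$, and setting $\varphi_0:=\varphi_0^{(N)}$, finishes the proof.

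The main obstacle is entirely the bookkeeping of the last two steps: one must recognise that the single genuinely divergent contribution is the one carried by $f_\ell$ with weight $\gamma_j^{2\ell+1}k_j^{2\ell-1}\sim j^{-\beta}$, $\beta<1$, and that every other term is controlled either by the rapid growth of $k_N$ (those with a strictly negative power of $k_j$) or, for the borderline terms that are only linear in $\gamma_j$, exactly by condition \eqref{growth_condition_kn_2} — which has been reverse-engineered so that $k_N^{-1}(\sum_{j=N}^{(1+\delta)N}\gamma_j k_j^{\frac{2\ell-1}{2\ell+1}})^{2\ell+1}\le N^{-(2\ell+1)}$. The only other point requiring care is to fix $\tilde T$ before sending $N\to\infty$, so that the negative powers of $t$ in $ii)$–$iii)$ become $N$-independent constants.
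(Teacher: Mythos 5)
Your proposal is correct and follows essentially the same route as the paper: fix $\tilde T=\min(T,\bar T)$, write $f=\sum_{k=0}^{\ell}f_k$, isolate the divergent term $\sum_j\gamma_j^{2\ell+1}k_j^{2\ell-1}=\sum_j j^{-(2\ell+1)(1+\varepsilon)/q}$ (divergent precisely because $\varepsilon<\tfrac{q}{2\ell+1}-1$), and control every other contribution either by the rapid growth of $k_N$ or by condition \eqref{growth_condition_kn_2}. The only differences are cosmetic refinements (an explicit $N^{1-\beta}$ rate for the leading term, and routing the $k<\ell$ borderline term through \eqref{growth_condition_kn_2} rather than the paper's direct observation that $L_{N,k}\to 0$).
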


\begin{proof}[Proof of Theorem \ref{thm_inflation_higher_order_muskat}]
First, by definition of the $\dot{\mathcal{F}}^{m,p}_q$ norm \eqref{definition_norm_non_periodic} it is easy to see that
\begin{equation*}
\left\|e^{-t\Lambda} \varphi\right\|_{\dot{\mathcal{F}}^{\frac{2\ell-1}{2\ell+1},p}_q} \leq \| \varphi\|_{\dot{\mathcal{F}}^{\frac{2\ell-1}{2\ell+1},p}_q} .
\end{equation*}
Next, let $\tilde{T} = \min\{T,\bar{T}\}$ and consider $f= \sum_{j=0}^\ell f_{j}$. Then, by using assumptions $ii)$ and $iii)$ we get the estimate
\begin{align*}
\left\|f(\tilde{T})\right\|_{\dot{\mathcal{F}}^{\frac{2\ell-1}{2\ell+1},p}_q} &\geq \|f_{\ell}\|_{\dot{\mathcal{F}}^{\frac{2\ell-1}{2\ell+1},p}_q}  - \sum_{k=1}^{\ell-1}\|f_{k}\|_{\dot{\mathcal{F}}^{\frac{2\ell-1}{2\ell+1},p}_q}  - \|e^{-t \Lambda}\varphi\|_{\dot{\mathcal{F}}^{\frac{2\ell-1}{2\ell+1},p}_q} \\
&\geq 
C_1 \sum\limits_{j=N}^{(1+\delta)N} \gamma_j^{2\ell+1} k_j^{2\ell-1}
-   \frac{C_2}{\tilde{T}^{\frac{2\ell-1}{2\ell+1}+2}  k_N} \sum\limits_{j=N}^{(1+\delta)N} \gamma_j^{2\ell+1}  k_j^{2\ell-1} \\
& \hspace{2cm}-  \frac{C_3}{\tilde{T}^{\frac{2\ell-1}{2\ell+1}+2}  k_N} \left(\sum\limits_{j=N}^{(1+\delta)N} \gamma_j  k_j^{\frac{2\ell-1}{2\ell+1}}\right)^{2\ell+1}\\
&\hspace{2cm}- \sum_{k=1}^{\ell-1} \frac{C_4}{\tilde{T}^{2}}  \left(\sum\limits_{j=N}^{(1+\delta)N}  \gamma_j^{(2k+1)q}  k_j^{(2k-2+\frac{2\ell-1}{2\ell+1})q} \right)^{1/q}\\
&\hspace{2cm} - \sum_{k=1}^{\ell-1} \frac{C_5}{\tilde{T}^{\frac{2\ell-1}{2\ell+1}+2} k_N}  \left(\sum\limits_{j=N}^{(1+\delta)N} \gamma_j  k_j^{\frac{2k-1}{2k+1}}\right)^{2k+1}\\
&\hspace{2cm} - \| \varphi\|_{\dot{\mathcal{F}}^{\frac{2\ell-1}{2\ell+1},p}_q}\\
&=  I_1 - I_2 - I_3-  I_4- I_5 - I_6 .
\end{align*}

We estimate each $I_i$, $i=1 , \cdots, 6$ separately.
\begin{description}
\item[Estimate for $I_1$:]
\begin{equation*}
I_1 = C_1 \sum\limits_{ j=N}^{(1+\delta)N} \gamma_j^{2\ell+1} k_j^{2\ell-1} = C_1  \sum\limits_{ j=N}^{(1+\delta)N} \frac{1}{j^{\frac{(1+\varepsilon)(2\ell+1)}{q}}} \to \infty \text{ as }N \to \infty, 
\end{equation*}
here the sequence diverges because assumption in $\varepsilon$ imply $\frac{(1+\varepsilon)(2\ell+1)}{q} <1$. Therefore by taking $N_1 > \tilde{N}$ large enough we get that 
\begin{equation*}
C_1 \sum\limits_{ j=N}^{(1+\delta)N} \gamma_j^{2\ell+1} k_j^{2\ell-1} > 4R,~ \forall N \geq N_1.
\end{equation*}
\item[Estimate for $I_2$:] 
$$I_2 = \frac{C_2}{\tilde{T}^{\frac{2\ell-1}{2\ell+1}+2}  k_N} \sum\limits_{j=N}^{(1+\delta)N}\gamma_j^{2\ell+1}  k_j^{2\ell-1}.$$
It is easy to see that $|I_2| = \frac{C_2}{k_N C_1\tilde{T}^{2+\frac{2\ell-1}{2\ell+1}} } |I_1|$, then there exists $N_2 \geq N_1$ large enough such that
\begin{equation*}
|I_2| \leq \frac{1}{2} |I_1|,\quad \forall N \geq N_2.
\end{equation*}

\item[Estimate for $I_3$:]
\begin{align*}
I_3  &= \frac{C_3}{\tilde{T}^{\frac{2\ell-1}{2\ell+1}+2}  k_N}  \left( \sum_{j=N}^{(1+\delta)N} \gamma_j  k_j^{\frac{2\ell-1}{2\ell+1}} \right)^{2\ell+1} \\
&= \frac{C_3}{\tilde{T}^{\frac{2\ell-1}{2\ell+1}+2}   k_N}  \left( \sum_{j=N}^{(1+\delta)N} \frac{1}{j^{\frac{2\ell-1}{2\ell+1} \frac{1+\varepsilon}{q}}} \right)^{2\ell+1}.
\end{align*}
Because as $N \to \infty$ the sequence inside the parenthesis diverge, in order to bound $I_3$ we make use that we have a factor of $k_N$ in the denominator. By the assumption \eqref{growth_condition_kn_2} we know that the sequence $k_N$ satisfy
\begin{equation*}
\sum_{j=N}^{(1+\delta)N} \frac{1}{j^{\frac{2\ell-1}{2\ell+1} \frac{1+\varepsilon}{q}}} < \frac{1}{N}  k_N^{\frac{1}{2\ell+1}},
\end{equation*}
which means that we can take $N_3 \geq N_2$ large enough such that
\begin{equation*}
|I_3| \leq \frac{C_3}{\tilde{T}^{2+\frac{2\ell-1}{2\ell+1}} } \frac{1}{N}< 1, ~ \forall N \geq N_3.
\end{equation*}

\item[Estimate for $I_4$:]
Substituting the definition of $\gamma_j$ we get the following identity
\begin{align*}
J_{N,k} &= \sum_{j=N}^{(1+\delta)N}  \gamma_j^{(2k+1)q}  k_j^{(2k-2+\frac{2\ell-1}{2\ell+1})q}\\ 
&= \sum_{j=N}^{(1+\delta)N}  \frac{1}{k_j^{q(2k+1)\frac{2\ell-1}{2\ell+1} - q(2k-2+\frac{2\ell-1}{2\ell+1})}} \frac{1}{j^{(2k+1)(1+\varepsilon)}},
\end{align*}
next, using that $\frac{2\ell-1}{2\ell+1}<1$ we get the following bound for the exponent of $k_j$
\begin{align*}
E &=q(2k+1)\frac{2\ell-1}{2\ell+1} - q\left(2k-2+\frac{2\ell-1}{2\ell+1}\right) \\
&= q(2k+1)\left( \frac{2\ell-1}{2\ell+1}   -  \frac{(2k-2+\frac{2\ell-1}{2\ell+1})}{2k+1}\right)\\
&= q(2k+1)\left( \frac{2\ell-1}{2\ell+1}   -  \frac{2k-1}{2k+1}\right)>0,
\end{align*}
and therefore we conclude that $J_{N,k} \to 0$ as $N\to \infty$, moreover because $\ell$ is fixed we can take $N_4 \geq N_3$ large enough such that
\begin{equation*}
I_4 = \frac{C_4}{\tilde{T}^{2}} \sum_{k=1}^{\ell-1}  J_{N,k}^{1/q} <1, ~ \forall N \geq N_4.
\end{equation*}

\item[Estimate for $I_5$:]
\begin{equation*}
 L_{N,k} = \sum_{j=N}^{(1+\delta)N}  \gamma_j  k_j^{\frac{2k-1}{2k+1}} = \sum_{j=N}^{(1+\delta)N}  \frac{1}{k_j^{\frac{2\ell-1}{2\ell+1} - \frac{2k-1}{2k+1}}} \frac{1}{j^{\frac{1+\varepsilon}{q}}} \to 0 \text{ as } N\to \infty
\end{equation*}
then we can take $N_5 \geq N_4$ large enough such that 
\begin{equation*}
I_5 = \sum_{k=1}^{\ell-1} \frac{C_5}{\tilde{T}^{2+\frac{2\ell-1}{2\ell+1}} k_N}  L_{N,k}^{2k+1} < 1, ~ \forall N \geq N_5.
\end{equation*}
\item[Estimate for $I_6$:] From hypothesis i) we have the estimate
\begin{equation*}
I_6 =  \| \varphi\|_{\dot{\mathcal{F}}^{\frac{2\ell-1}{2\ell+1},p}_q} \leq C_6 \left( \sum_{j=N}^{(1+\delta)N} \gamma_j^q k_j^{\frac{2\ell-1}{2\ell+1} q}\right)^{1/q},
\end{equation*}
substituting the definition of $\gamma_j$  we know that
\begin{equation*}
\sum_{j=N}^{(1+\delta)N} \gamma_j^q k_j^{\frac{2\ell-1}{2\ell+1} q} = \sum_{j=N}^{(1+\delta)N} \frac{1}{j^{1+\varepsilon}} \to 0 \text{ as } N\to \infty,
\end{equation*}
therefore we can take $N_6 \geq N_5$ large enough such that 
\begin{equation*}
I_6=  \| \varphi\|_{\dot{\mathcal{F}}^{\frac{2\ell-1}{2\ell+1},p}_q}  \leq C_6 \left( \sum_{j=N}^{(1+\delta)N} \gamma_j^q k_j^{\frac{2\ell-1}{2\ell+1} q}\right)^{1/q} <\frac{1}{R}, \forall N \geq N_6.
\end{equation*}
\end{description}

Finally by combining the previous estimates we obtain
\begin{equation*}
\|f(\tilde{T})\|_{\dot{\mathcal{F}}^{\frac{2\ell-1}{2\ell+1},p}_q}\geq 2R - 4 \geq R,
\end{equation*}
for all $N\geq N_6$. This completes the proof of Theorem
\ref{thm_inflation_higher_order_muskat}.
\end{proof}

\section{Verification of Hypothesis Theorem \ref{thm_inflation_higher_order_muskat}}\label{section_non_periodic}

The goal of this section is to prove Theorem \ref{thm_norm_inflation_finite_expansion_muskat} by verifying the hypothesis of Theorem \ref{thm_inflation_higher_order_muskat}. This is done in three steps: first, hypothesis (i) is verified in Lemma \ref{lemma_size_initial_data_higher_order_muskat} by estimating the size of the initial data given by \eqref{intial_condition_higher_order_muskat}. Next, using the same initial data as before, we verify hypothesis (ii) in Lemma 
\ref{lemma_estimate_lower_order_terms_higher_order_muskat} by using the decomposition \eqref{muskat_problem_truncated_second_iteration_decomposition} to obtain that the terms $f_k$ for $k< \ell$ are small and finally hypothesis (iii) is verified in Lemma \ref{lemma_estimate_main_term_higher_order_muskat} to  obtain that the term $f_\ell$ is dominant.

A definition that will be useful in the proofs is the following.
\begin{definition}
Consider $E: C([0,T],C_0^\infty(\R)) \to C([0,T],C_0^\infty(\R))$ defined by
\begin{equation*}
E(g)(x, t) := \int_0^t e^{-(t-\tau) \Lambda} \mathcal{F}^{-1}(g)(x,\tau) d\tau.
\end{equation*}
Via approximations this definition can be extended continuously to all $g \in C([0,T],S')$ to obtain $E: C([0,T],S') \to C([0,T],S')$.
\end{definition}

\begin{lemma}[Size of the Initial Data]\label{lemma_size_initial_data_higher_order_muskat} Let $m\geq 0$, $p\geq 1$ and $q\geq 1$, and consider $\varphi^{(N)}$ given by \eqref{intial_condition_higher_order_muskat}, then 
\begin{equation*}
\left\|\varphi^{(N)}\right\|_{\dot{\mathcal{F}}_q^{m,p}} \leq    C \left( \sum_{j=N}^{(1+\delta)N} \gamma_j^q k_j^{m q}\right)^{1/q},
\end{equation*}
for a constant $C = C(m,q) >0$.
\end{lemma}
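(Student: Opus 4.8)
The plan is to exploit the fact that, on the Fourier side, $\hat{\varphi}^{(N)}$ is just a finite sum of unit-height bumps of length $2$, sitting at the points $\pm k_j$ and $\pm(2\ell k_j+M)$ and weighted by $\gamma_j$, and that the growth conditions \eqref{growth_condition_kn_1} and \eqref{growth_condition_kn_3} make these bumps pairwise disjoint, located at heights comparable to $k_j$, and distributed among the dyadic annuli $C_k$ with bounded multiplicity. Granting this, the $\dot{\mathcal{F}}^{m,p}_q$ norm of $\varphi^{(N)}$ is, up to a constant, the $\ell^q$ norm in $j$ of the block quantities $\gamma_j k_j^{m}$, which is exactly the claimed right-hand side.

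First I would fix notation. Put $\psi_j:=P_{k_j}+P_{2\ell k_j+M}=\chi_{k_j}+\chi_{-k_j}+\chi_{2\ell k_j+M}+\chi_{-(2\ell k_j+M)}$, so that $\hat{\varphi}^{(N)}=\sum_{j=N}^{(1+\delta)N}\gamma_j\psi_j$. Using \eqref{growth_condition_kn_3} to dominate $M$ by $k_0\le k_j$ and \eqref{growth_condition_kn_1} to separate $2\ell k_j+M$ from $k_{j+1}$, one checks that all the length-$2$ intervals involved are pairwise disjoint; hence each $\psi_j$ is the indicator $\mathbf 1_{I_j}$ of a set $I_j$ made of four such intervals, $|I_j|=8$, and $|\hat{\varphi}^{(N)}|^p=\sum_j\gamma_j^p\mathbf 1_{I_j}$ pointwise. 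Moreover on $I_j$ one has $\tfrac12 k_j\le|\xi|\le(2\ell+1)k_j$, so $|\xi|^{mp}\le(2\ell+1)^{mp}k_j^{mp}$ there since $m\ge0$. Each of the four intervals making up $I_j$ has length $2$ and (by \eqref{growth_condition_kn_3}) lies far from the origin, hence meets at most two annuli $C_k$, so $\#\{k:I_j\cap C_k\neq\emptyset\}\le 8$. Conversely, since consecutive $k_j$, hence the blocks $I_j$, grow at least geometrically by \eqref{growth_condition_kn_1} while a single $C_k$ spans only a factor $2$ in $|\xi|$, each annulus $C_k$ meets at most a fixed number $C_0$ of the blocks, uniformly in $N$ and $k$.

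With these facts the computation is routine. Writing $J_k:=\{j:I_j\cap C_k\neq\emptyset\}$, for each $k\in\Z$,
\begin{align*}
\int_{C_k}|\xi|^{mp}|\hat{\varphi}^{(N)}|^p\,d\xi
&=\sum_j\gamma_j^p\int_{C_k\cap I_j}|\xi|^{mp}\,d\xi\\
&\le(2\ell+1)^{mp}\sum_{j\in J_k}\gamma_j^p k_j^{mp}\,|C_k\cap I_j|
\le 8(2\ell+1)^{mp}\sum_{j\in J_k}\gamma_j^p k_j^{mp}.
\end{align*}
Raising this to the power $q/p$ and using the subadditivity of $t\mapsto t^{q/p}$ when $q\le p$, or the bound $(\sum_{j\in J_k}a_j)^{q/p}\le(\#J_k)^{q/p-1}\sum_{j\in J_k}a_j^{q/p}\le C_0^{q/p-1}\sum_{j\in J_k}a_j^{q/p}$ when $q>p$, applied with $a_j=\gamma_j^p k_j^{mp}$ (so $a_j^{q/p}=\gamma_j^q k_j^{mq}$), gives
\begin{equation*}
\left(\int_{C_k}|\xi|^{mp}|\hat{\varphi}^{(N)}|^p\,d\xi\right)^{q/p}\le C\sum_{j\in J_k}\gamma_j^q k_j^{mq}
\end{equation*}
for a constant $C=C(m,q)$, the dependence on $p$ and on the fixed $\ell$ reducing to a bounded factor after the $q$-th root. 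Summing over $k\in\Z$ and using that each $j$ lies in at most eight of the sets $J_k$ yields $\|\varphi^{(N)}\|_{\dot{\mathcal{F}}^{m,p}_q}^q\le 8C\sum_{j=N}^{(1+\delta)N}\gamma_j^q k_j^{mq}$, and a $q$-th root finishes the proof.

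The step I expect to require the most care is the bounded-overlap claim $\#J_k\le C_0$. One cannot avoid it by applying the triangle inequality directly to $\hat{\varphi}^{(N)}=\sum_j\gamma_j\psi_j$: for $q>p$ that route leaves an $\ell^p$ sum in $j$, which is strictly weaker than the required $\ell^q$ sum. It is precisely the geometric growth forced by \eqref{growth_condition_kn_1} that prevents many blocks from accumulating in a single dyadic annulus, and this is where that hypothesis genuinely enters the argument.
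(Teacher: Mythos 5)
Your proof is correct and follows essentially the same route as the paper's: both exploit the disjointness of the Fourier-side bumps and their bounded overlap with the dyadic annuli $C_k$ (in both directions) to interchange the $\ell^{q/p}$ power with the finite sum over $j$, yielding the $\ell^q$ quantity $\bigl(\sum_j \gamma_j^q k_j^{mq}\bigr)^{1/q}$. If anything, your bounded-overlap count $\#J_k\le C_0$ together with the explicit subadditivity/H\"older step is slightly more careful than the paper's claim that at most one bump meets each annulus, and it is all the argument actually needs.
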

\begin{proof}[Proof of Lemma \ref{lemma_size_initial_data_higher_order_muskat}] From the definition of the $\dot{\mathcal{F}}^{m,p}_q$ norm in \eqref{definition_norm_non_periodic} we have
\begin{multline*}
\left\|\varphi^{(N)}\right\|_{\dot{\mathcal{F}}_q^{m,p}}  \leq C  \Bigg[ \sum_{j=N}^{(1+\delta)N} \gamma_j^q \Bigg( \left( \int_\R | \xi |^{m p}|P_{k_j}|^p d\xi \right)^{q/p} \\
+ \left(\int_\R | \xi |^{m p}|P_{2 \ell k_j + M}|^p d\xi\right)^{q/p}\Bigg) \Bigg]^{1/q},
\end{multline*}
here we used that because of \eqref{growth_condition_kn_1} on each annulus $C_k$ at most one among $\{P_{k_j}, P_{2\ell k_j+M}\}_{j=N}^{(1+\delta)N}$ is nonzero. Next using that the support of $P_A(\xi)$
for $A>2$ can only intersect at most two annulus $C_k$ we can bound
\begin{align*}
\left\|\varphi^{(N)}\right\|_{\mathcal{\dot{F}}_q^{m,p}}& \leq  C \left( \sum_{j=N}^{(1+\delta)N} \gamma_j^q \Big( (k_j+1)^{mq} +  (2 \ell k_j + M+1)^{m q} \Big) \right)^{1/q}\\
& \leq   C \left( \sum_{j=N}^{(1+\delta)N} \gamma_j^q k_j^{mq}\right)^{1/q},
\end{align*}
for some constant $C = C(m,\ell) > 0$.
\end{proof}
\begin{lemma}[Estimate for $1 \leq k<\ell$]\label{lemma_estimate_lower_order_terms_higher_order_muskat} Let $f_{k }$ for $1 \leq k <\ell$ as defined by \eqref{muskat_problem_truncated_second_iteration_decomposition} with $\varphi =\varphi^{(N)}$ given by \eqref{intial_condition_higher_order_muskat}, and  $t>0$ such that $t M < 1$ and $t k_N > 1$ then for $0 < m < 1$ we have the estimate
\begin{multline*}
\|f_{k}\|_{\dot{\mathcal{F}}^{m,p}_q}
\leq \frac{C}{t^{2}}  \left(\sum_{j=N}^{(1+\delta)N}  \gamma_j^{(2k+1)q}  k_j^{(2k-2+m)q} \right)^{1/q}\\+\frac{C}{t^{m+2} k_N }  \left(\sum_{j=N}^{(1+\delta)N}  \gamma_j  k_j^{\frac{2k-1}{2k+1}}\right)^{2k+1},
\end{multline*}
for $1\leq k < \ell$ and a constant $C = C(m,p,q,k,\ell)>0$.
\end{lemma}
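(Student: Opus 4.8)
The plan is to estimate $f_k(t) = E(\widehat{T_k f_0})(t)$ directly from the Duhamel representation \eqref{equation_toy_system_muskat_higher_order}, where $f_0 = e^{-t\Lambda}\varphi^{(N)}$, by first understanding the Fourier support and size of $T_k f_0$ and then controlling the time integral against the heat-type semigroup $e^{-(t-\tau)\Lambda}$. The starting point is the representation \eqref{terms_tk_expasion_taylor_muskat_real_line}, which after Fourier transform expresses $\widehat{T_k f_0}(\xi,\tau)$ as a constant times $i\xi$ paired against a $(2k{+}1)$-fold convolution of $\widehat{f_0}(\cdot,\tau)$ with itself, with a kernel factor coming from $\int (\delta_\alpha/\alpha)^{2k+1}$; concretely one gets a sum over $(2k{+}1)$-tuples of frequencies $\xi = \eta_1 + \cdots + \eta_{2k+1}$ with a symbol bounded (up to constants) by $|\xi|\,\min_i(\text{something like } |\eta_i|)^{2k}$, which is the mechanism producing the $k_j^{2k-2}$ and $k_j^{(2k-1)/(2k+1)}$ powers in the two terms of the claimed bound. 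Since $\widehat{\varphi}^{(N)}$ is a sum of bumps $P_{k_j}$ and $P_{2\ell k_j + M}$ of height one supported on unit intervals near $\pm k_j$ and $\pm(2\ell k_j + M)$, the convolution is a sum of bumps whose centers are sums of $(2k{+}1)$ of these values; the growth condition \eqref{growth_condition_kn_1} guarantees essentially no overlap between contributions from distinct indices $j$, so the $\dot{\mathcal F}^{m,p}_q$ norm decouples into an $\ell^q$-sum over $j$ and the "$p$" integration just contributes the measure of a unit-length interval, i.e.\ a harmless constant.

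The key step is to split the frequency output of the convolution into two regimes, which correspond exactly to the two terms on the right-hand side. In the first regime the output frequency is comparable to $k_j$ — this happens when the low-frequency inputs $P_{k_j}$ combine to something of order $k_j$ (for instance an odd number of $+k_j$ bumps cancelling down, but generically the surviving term has one factor near $2\ell k_j + M$ balanced against factors near $k_j$). Here $|\xi| \sim k_j$, there is no smallness from $e^{-(t-\tau)\Lambda}$ decay, and bounding the $\tau$-integral on $[0,t]$ by its length gives a factor $t$; combined with a second power of $t$ that one extracts by estimating $\int_0^t e^{-(t-\tau)c k_j}\,d\tau \lesssim (c k_j)^{-1}$ only when needed — actually the cleaner bookkeeping is: the symbol gives $|\xi|\,(k_j)^{2k}\cdot(\text{height})^{2k+1}$, and $\gamma_j^{2k+1}$ supplies the heights, producing $\gamma_j^{(2k+1)q} k_j^{(2k-2+m)q}$ after raising to the $q$-power and accounting for the $|\xi|^{m}$ weight with $|\xi|\sim k_j$; the explicit $t^{-2}$ prefactor tracks the two semigroup integrations absorbed along the way. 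In the second regime the output frequency is large, of order $k_j$ but with the high bumps $P_{2\ell k_j + M}$ surviving to give frequency $\gtrsim 2\ell k_j$; here the semigroup $e^{-(t-\tau)\Lambda}$ provides strong decay $e^{-c t k_j}$, and using $t k_N > 1$ one trades this exponential for any negative power of $t k_N$, in particular for the explicit $t^{-(m+2)} k_N^{-1}$ factor; the remaining frequency-space bound is crudely $\big(\sum_j \gamma_j k_j^{(2k-1)/(2k+1)}\big)^{2k+1}$ because in this regime one can afford to bound the convolution by the product of $\ell^1$-type norms of the bumps rather than decoupling in $j$ — this is where the $(2k-1)/(2k+1)$ exponent, tuned to the $\dot{\mathcal F}^{(2k-1)/(2k+1)}$ scaling at level $k$, appears.

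The main obstacle I expect is the careful symbol analysis of $T_k$ on the Fourier side: showing that the multiplier attached to the $(2k{+}1)$-linear convolution is genuinely bounded by $|\xi|$ times the $2k$-th power of (a suitable combination of) the input frequencies, uniformly, despite the principal-value integral $\text{p.v.}\int_\R \partial_x(\delta_\alpha f/\alpha)^{2k+1}\,d\alpha$. One must compute $\int_\R (e^{2\pi i \alpha \eta_1} - 1)\cdots$ type products divided by $\alpha^{2k+1}$, i.e.\ recognize this as a multilinear Fourier multiplier of the form $\int_\R \prod_{i}\big(\tfrac{e^{2\pi i\alpha\eta_i}-1}{\alpha}\big)\,d\alpha$, and bound it; the bound should degrade to $|\eta|^{2k}$ in the worst frequency and no worse, and one needs the right power of $t$ to come out so that the two $t$-dependent prefactors $t^{-2}$ and $t^{-(m+2)}k_N^{-1}$ are correct. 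A secondary technical nuisance is the combinatorial bookkeeping of which sums $\eta_1 + \cdots + \eta_{2k+1}$ of $\pm k_j$'s and $\pm(2\ell k_j + M)$'s can coincide across different $j$ (controlled by \eqref{growth_condition_kn_1}) and which land in the "low output" versus "high output" regime (controlled by the separation $(2\ell+1)M < k_0/2$ from \eqref{growth_condition_kn_3}); these are routine but must be done carefully to get the stated clean $\ell^q$ structure. Once the symbol bound and the regime split are in hand, raising to the $q$-th power, summing over the window $N \le j \le (1+\delta)N$, and collecting constants depending only on $m,p,q,k,\ell$ finishes the proof.
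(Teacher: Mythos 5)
Your overall architecture (Fourier-side multilinear multiplier $\int_\R \prod_i \frac{e^{-2\pi i\alpha\eta_i}-1}{\alpha}\,d\alpha$, lacunarity to decouple blocks, a two-term split matching the two terms of the bound) is the right skeleton, but there are two genuine gaps in how you propose to realize it.

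First, the split itself. The two terms of the estimate do not correspond to ``output frequency $\sim k_j$'' versus ``output frequency $\gtrsim 2\ell k_j$.'' They correspond to the \emph{diagonal} tuples, in which all $2k+1$ input frequencies come from the same block $\Lambda(k_j)$ (these are the terms $J_k$ in \eqref{definition_J1_diagonal_terms}), versus the \emph{cross-block} tuples in which at least two distinct indices $s_\eta$ appear (the terms $HF$ in \eqref{definition_HF_higher_order_muskat}). The diagonal terms with a surviving high bump $P_{2\ell k_j+M}$ — which you assign to your second regime — are in fact part of the first term. The reason the split must be made this way is structural: only the diagonal terms have Fourier supports organized lacunarily in a single index $j$ (via \eqref{growth_condition_kn_1}), which is what yields the $\ell^q$-in-$j$ form $\bigl(\sum_j \cdots\bigr)^{1/q}$ of the first term. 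The cross-block terms number on the order of $(\delta N)^{2k+1}$ and cannot be decoupled this way; one instead applies the triangle inequality tuple by tuple, and it is precisely this that produces the product structure $\bigl(\sum_j \gamma_j k_j^{(2k-1)/(2k+1)}\bigr)^{2k+1}$ of the second term, together with the extra $1/k_N$ coming from the lower bounds $|c_1+\cdots+c_{2k+1}|\gtrsim k_N$ and $\sum_i|c_i|-|\sum_i c_i|\gtrsim \max_i|c_i|$ (Lemma \ref{lemma_size_of_sums_higher_order_muskat}). With your frequency-based split, the first term would contain cross-block contributions for which the claimed $\ell^q$ structure does not follow.

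Second, and more seriously, your account of the first term is inconsistent with the exponent $k_j^{(2k-2+m)q}$ you need. You assert that in this regime ``there is no smallness from $e^{-(t-\tau)\Lambda}$ decay'' and propose bounding the $\tau$-integral by its length; that yields $|\xi|^m\,k_j^{2k}\cdot t \sim k_j^{2k+m}\,t$, not $k_j^{2k-2+m}\,t^{-2}$, and with the exponent $2k+m$ the sum $J_{N,k}$ in the proof of Theorem \ref{thm_inflation_higher_order_muskat} diverges, so the lemma would be useless downstream. The actual mechanism is the arithmetic non-resonance fact (Lemma \ref{lemma_higher_order_sums_muskat} ii): for $k<\ell$, no signed combination $c_1+\cdots+c_{2k+1}$ with $c_i\in\{\pm k_j,\pm(2\ell k_j+M)\}$ can be small — it is always at least $k_j/2$ in modulus. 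Hence \emph{every} surviving diagonal term sits at output frequency $\gtrsim k_j$, the semigroup supplies $e^{-c\,t k_j}$, and trading this exponential for $(tk_j)^{-2}$ is exactly what converts $k_j^{2k}$ into $k_j^{2k-2}\,t^{-2}$. This is also where the hypothesis $k<\ell$ enters (for $k=\ell$ the combination can equal $\pm M$, which is the resonance exploited in Lemma \ref{lemma_estimate_main_term_higher_order_muskat}); your proposal never engages with this cancellation dichotomy, which is the heart of the construction.
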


\begin{lemma}[Estimate for $k = \ell$]\label{lemma_estimate_main_term_higher_order_muskat} 
Under the same assumptions as Lemma \ref{lemma_estimate_lower_order_terms_higher_order_muskat} for $0<m<1$ we have the estimate
\begin{multline*}
\|f_{\ell}\|_{\dot{\mathcal{F}}^{m,p}_q} \geq C \sum\limits_{j=N}^{(1+\delta)N} \gamma_j^{2\ell+1} k_j^{2\ell-1} -  \frac{C}{t^{m+2}  k_N} \sum_{j=N}^{(1+\delta)N} \gamma_j^{2\ell+1}  k_j^{2\ell-1} \\
-  \frac{C}{t^{m+2}  k_N} \left(\sum_{j=N}^{(1+\delta)N} \gamma_j  k_j^{\frac{2\ell-1}{2\ell+1}}\right)^{2\ell+1},
\end{multline*}
for a constant $C = C(m,p,q,\ell)>0$.
\end{lemma}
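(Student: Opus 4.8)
The plan is to follow the scheme of \cite{Germain2008,Iwabuchi2016}: isolate from $f_{\ell}$ the one ``resonant'' frequency interaction tuned to the data \eqref{intial_condition_higher_order_muskat}, show it deposits a bump of the advertised size in one fixed low‑frequency annulus, and estimate everything else by $k_N^{-1}$ times that size. Concretely, using \eqref{equation_toy_system_muskat_higher_order} one writes $f_{\ell}(t)=E(\mathcal F(T_\ell f_0))(t)$ with $f_0(\tau)=e^{-\tau\Lambda}\varphi^{(N)}$, and expands $T_\ell$ as a $(2\ell+1)$‑linear Fourier multiplier: from \eqref{terms_tk_expasion_taylor_muskat_real_line},
\begin{equation*}
\widehat{T_\ell g}(\xi)=c_\ell\,\xi\int_{\xi_1+\dots+\xi_{2\ell+1}=\xi}m_\ell(\xi_1,\dots,\xi_{2\ell+1})\prod_{i=1}^{2\ell+1}\widehat g(\xi_i)\,d\sigma,\qquad c_\ell=\frac{(-1)^\ell 2i}{2\ell+1},
\end{equation*}
where $m_\ell(\vec\xi)=\mathrm{p.v.}\int_\R\alpha^{-(2\ell+1)}\prod_i(1-e^{-2\pi i\alpha\xi_i})\,d\alpha$ is smooth away from the origin and homogeneous of degree $2\ell$. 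Inserting $\widehat{f_0}(\xi_i,\tau)=e^{-2\pi\tau|\xi_i|}\widehat{\varphi^{(N)}}(\xi_i)$ and expanding $\widehat{\varphi^{(N)}}$ via \eqref{intial_condition_higher_order_muskat} writes $\widehat{T_\ell f_0}$ as a finite sum of interaction terms, one for each assignment to the $2\ell+1$ factors of a bump among $\chi_{\pm k_{j_i}},\chi_{\pm(2\ell k_{j_i}+M)}$.

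The first step is to single out the resonance. Using the fast growth \eqref{growth_condition_kn_1} and \eqref{growth_condition_kn_3} (which force $k_N>2(2\ell+1)M$), one checks that the only interactions whose output frequency lies below $k_N-(2\ell+1)$ are the \emph{diagonal} ones — one factor $\chi_{\pm(2\ell k_j+M)}$ and the other $2\ell$ factors $\chi_{\mp k_j}$ with the \emph{same} index $j$ — which place the output in the fixed window $W:=\pm[M-(2\ell+1),M+(2\ell+1)]$, contained in a union $C_{k_*}\cup C_{k_*+1}$ of at most two annuli with $2^{k_*}\le M<2^{k_*+1}$; every other interaction is supported in annuli disjoint from these. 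Write $f_{\ell}=g_N+h_N$, where $g_N$ is $E$ applied to the explicit leading part of the diagonal interaction. On the resonant domain $\sum_i|\xi_i|=2\xi_1-\xi=4\ell k_j+M+O(\ell)$, and by homogeneity $m_\ell(\vec\xi)=k_j^{2\ell}m_\ell^*(1+O(k_j^{-1}))$ with $m_\ell^*:=m_\ell(2\ell,-1,\dots,-1)$ (for $\ell=1$ one computes $m_1^*=-8\pi^3 i$; an elementary computation gives $m_\ell^*\ne 0$ for every $\ell$). Applying $E$ and using $tk_N>1$, $tM<1$, the inner time integral at $\xi\in W$ is $\int_0^t e^{-2\pi(t-\tau)|\xi|}e^{-2\pi\tau(4\ell k_j+O(\ell))}d\tau=\tfrac{e^{-2\pi t|\xi|}}{8\pi\ell k_j}(1+O(k_j^{-1}))$, whence
\begin{equation*}
\widehat{g_N(t)}(\xi)=\kappa_\ell\, m_\ell^*\,\Psi(\xi)\,e^{-2\pi t|\xi|}\sum_{j=N}^{(1+\delta)N}\gamma_j^{2\ell+1}k_j^{2\ell-1},
\end{equation*}
with $\kappa_\ell\ne 0$ explicit and $\Psi\ge 0$ a fixed ($j$‑independent) bump supported in $W$ with $\int\Psi^p\gtrsim 1$. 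Since $t|\xi|\le 2tM<2$ gives $e^{-2\pi t|\xi|}\ge e^{-4\pi}$ on $W$, restricting the $\dot{\mathcal F}^{m,p}_q$ norm to $C_{k_*}\cup C_{k_*+1}$ yields $\|g_N(t)\|_{\dot{\mathcal F}^{m,p}_q}\ge C\sum_{j=N}^{(1+\delta)N}\gamma_j^{2\ell+1}k_j^{2\ell-1}$ for some $C=C(m,p,\ell)>0$ (fixing, say, $M=2\ell+3$).

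It remains to bound $h_N=(E(\mathrm{diagonal})-g_N)+E(\mathrm{non\text{-}diagonal})$. The first bracket is supported in $C_{k_*}\cup C_{k_*+1}$; its size comes from the $O(k_j^{-1})$ relative corrections above and from the discarded exponentials $e^{-2\pi t(4\ell k_j+M)}$, handled by the elementary bound $u^{m+2}e^{-cu}\le C_{m,\ell}$ for $u=tk_j\ge 1$ together with $t<1$, contributing $\le\frac{C}{t^{m+2}k_N}\sum_j\gamma_j^{2\ell+1}k_j^{2\ell-1}$. For $E(\mathrm{non\text{-}diagonal})$ every interaction has output frequency $\ge k_N-(2\ell+1)$, and since for each one either $\sum_i|\xi_i|=|\xi|$ or $\sum_i|\xi_i|-|\xi|\ge k_N$, the Duhamel factor $\int_0^t e^{-2\pi(t-\tau)|\xi|}e^{-2\pi\tau\sum_i|\xi_i|}d\tau$ is $\le C\,t^{-(m+2)}k_N^{-1}|\xi|^{-(m+2)}$ (using $e^{-x}\lesssim_m x^{-(m+2)}$ and $tk_N>1$). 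The surviving amplitudes are then summed in two complementary ways: tracking the explicit amplitudes ($\sim\gamma_j^{2\ell+1}k_j^{2\ell+1}$ at frequency $\sim k_j$, etc.) of the near‑diagonal interactions and using the embedding $\ell^1\hookrightarrow\ell^q$ gives a contribution $\le\frac{C}{t^{m+2}k_N}\sum_j\gamma_j^{2\ell+1}k_j^{2\ell-1}$; while for the remaining interactions a crude submultiplicative estimate — splitting $\mathrm{p.v.}\int d\alpha$ at $|\alpha|\sim k_N^{-1}$, using $\|\delta_\alpha g/\alpha\|_{\dot{\mathcal F}^{0,1}}\lesssim\min\{\|g\|_{\dot{\mathcal F}^{1,1}},|\alpha|^{-1}\|g\|_{\dot{\mathcal F}^{0,1}}\}$ and the algebra property of $\dot{\mathcal F}^{0,1}$ — bounds them by $\sup_\tau\|f_0(\tau)\|_{\dot{\mathcal F}^{\frac{2\ell-1}{2\ell+1},1}}^{2\ell+1}\lesssim\big(\sum_j\gamma_j k_j^{\frac{2\ell-1}{2\ell+1}}\big)^{2\ell+1}$ (since $e^{-2\pi\tau|\cdot|}\le 1$), contributing $\le\frac{C}{t^{m+2}k_N}\big(\sum_j\gamma_j k_j^{\frac{2\ell-1}{2\ell+1}}\big)^{2\ell+1}$. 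Adding the three bounds and using $\|f_{\ell}(t)\|_{\dot{\mathcal F}^{m,p}_q}\ge\|g_N(t)\|_{\dot{\mathcal F}^{m,p}_q}-\|h_N(t)\|_{\dot{\mathcal F}^{m,p}_q}$ gives the lemma.

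The main obstacle is this last step: proving the frequency‑support separation rigorously, so that the lower bound from $W$ is not spoiled by non‑diagonal terms — this is exactly where \eqref{growth_condition_kn_1} and \eqref{growth_condition_kn_3} are used — and then organizing the combinatorially many non‑diagonal interactions so that, after the $E$‑smoothing, their total is $k_N^{-1}$ times the main term. Both amplitude bounds are genuinely needed, since near‑diagonal interactions only produce the scale $\sum_j\gamma_j^{2\ell+1}k_j^{2\ell-1}$ whereas the fully off‑diagonal ones admit only the submultiplicative bound, which is precisely what yields the term $\big(\sum_j\gamma_j k_j^{\frac{2\ell-1}{2\ell+1}}\big)^{2\ell+1}$. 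A smaller but genuine point is the verification that $m_\ell(2\ell,-1,\dots,-1)\ne 0$ for all $\ell$.
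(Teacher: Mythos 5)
Your overall architecture matches the paper's: the split of $f_\ell$ into the same-index interactions with output frequency $\pm M$ (the paper's $J_\ell$), the same-index interactions with large output frequency ($HF_1$), and the mixed-index interactions ($HF_2$), and the lower bound for the resonant part via the value of the multiplier at $(-k_j,\dots,-k_j,2\ell k_j+M)$ is exactly the paper's route. However, your treatment of the mixed-index terms has a genuine gap. You claim that a ``crude submultiplicative estimate'' — splitting the $\alpha$-integral at $|\alpha|\sim k_N^{-1}$ and using $\|\delta_\alpha g/\alpha\|_{\dot{\mathcal F}^{0,1}}\lesssim\min\{\|g\|_{\dot{\mathcal F}^{1,1}},|\alpha|^{-1}\|g\|_{\dot{\mathcal F}^{0,1}}\}$ with the Wiener-algebra property — bounds these terms by $\sup_\tau\|f_0(\tau)\|_{\dot{\mathcal F}^{(2\ell-1)/(2\ell+1),1}}^{2\ell+1}$. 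It cannot: that splitting distributes at best a total of $2\ell$ derivatives among the $2\ell+1$ factors (and asymmetrically — every factor but the highest-frequency one absorbs a full derivative), whereas the target requires only $2\ell-1$ total derivatives, i.e.\ $\tfrac{2\ell-1}{2\ell+1}$ per factor. Concretely, for a tuple with all indices equal to $j$ your bound scales like $\gamma_j^{2\ell+1}k_j^{2\ell}/k_N$ versus the needed $\gamma_j^{2\ell+1}k_j^{2\ell-1}/k_N$, an overcount by $k_j$, and since $\gamma_jk_j=j^{-(1+\varepsilon)/q}k_j^{2/(2\ell+1)}\to\infty$ while $\gamma_jk_j^{(2\ell-1)/(2\ell+1)}$ is bounded, this loss destroys the summation over tuples. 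The missing mechanism is the one the paper isolates in Lemma \ref{lemma_estimate_R_higher_frequency}: (a) the symmetric pointwise bound $|\Gamma_{2\ell+1}(A_1,\dots,A_{2\ell+1})|\lesssim\prod_i|A_i|^{2\ell/(2\ell+1)}$ (the geometric mean of the ``omit one variable'' bounds, Lemma \ref{properties_gamma_higher_dimension} vi)); and (b) the fact that the \emph{product} of the output frequency $|c_1+\cdots+c_{2\ell+1}|$ and the Duhamel gap $\sum_i|c_i|-|c_1+\cdots+c_{2\ell+1}|$ is bounded below by $\tfrac18 k_N\max_i|c_i|$ (Lemma \ref{lemma_size_of_sums_higher_order_muskat}). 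The extra factor $\max_i|c_i|\gtrsim(\prod_i|c_i|)^{1/(2\ell+1)}$ is precisely what lowers the exponent from $\tfrac{2\ell}{2\ell+1}$ to $\tfrac{2\ell-1}{2\ell+1}$ per factor; your sketch spends the two gaps separately (one for $k_N^{-1}$, one for $t^{-(m+2)}|\xi|^{-(m+2)}$) and never extracts it.

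A second, smaller issue you yourself flag: the nonvanishing of $m_\ell^*=m_\ell(2\ell,-1,\dots,-1)$ for every $\ell$, which you verify only for $\ell=1$, is the heart of the lower bound and must be proved. The paper does this by reducing $\Gamma_{2\ell+1}(-k_j,\dots,-k_j,2\ell k_j+M)$ to $i(2i)^{2\ell+1}\pi^{2\ell}k_j^{2\ell}\int_\R\sin^{2\ell}(\beta)\sin(2\ell\beta)\beta^{-(2\ell+1)}d\beta=(-1)^{\ell+1}(2\pi)^{2\ell+1}k_j^{2\ell}$, plus an $O(Mk_j^{2\ell-1})$ correction controlled by the scaling and size properties of $\Gamma_{2\ell+1}$; some such explicit evaluation is needed, together with the check (your formula does have it) that the sign is independent of $j$ so the contributions in the window near $\pm M$ add rather than cancel.
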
	
\begin{remark}
This Lemma is one of the more delicate steps in the proof since it requires a lower bound for a highly oscillatory singular integral.
\end{remark}
For the proof of Lemma \ref{lemma_estimate_lower_order_terms_higher_order_muskat} an important technical tool is to be able to estimate the fo\-llo\-wing integral which will appear several times in our computations.
\begin{definition}\label{definition_gamma_multidimensional}
The function $\Gamma_{2k+1}: \R^{2k+1} \to \C$ is defined by the following integral
\begin{equation*}
\Gamma_{2k+1}(A_1,\cdots , A_{2k+1}) := i\int_{\R} m_\alpha(A_1)m_\alpha(A_2)\cdots m_\alpha(A_{2k})m_\alpha(A_{2k+1})d\alpha, 
\end{equation*}
where $m_\alpha(A) = \frac{1-e^{-2\pi i\alpha A}}{\alpha}$. 
\end{definition}
Some important properties of $\Gamma_{2k+1}$ are given by the following Lemma.

\begin{lemma}[Properties of $\Gamma_{2k+1}$]\label{properties_gamma_higher_dimension}
Let $k\geq 1$, then the function $\Gamma_{2k+1}$ given by Definition \ref{definition_gamma_multidimensional} satisfy the following
\begin{itemize}
\item[i)] $\Gamma_{2k+1}(A_1, \cdots, A_{2k+1})$ is given explicitly by
\begin{multline*}
\Gamma_{2k+1}(A_1, \cdots, A_{2k+1}) =  \frac{(2\pi)^{2k} \pi}{(2k)!}\Big( A_1^{2k-1} |A_1| + \cdots + A_{2k+1}^{2k-1}|A_{2k+1}|\\
- (A_1+A_2)^{2k-1} |A_1+A_2| - (A_1+A_3)^{2k-1} |A_1+A_3|\\
+ \cdots + (A_1 + \cdots + A_{2k+1})^{2k-1}|A_1 + \cdots + A_{2k+1}| \Big),
\end{multline*}
\item[ii)] $\Gamma_{2k+1}( c A_1, \cdots , c A_{2k+1}) = c^{2k} \sgn(c) \Gamma_{2k+1}( A_1, \cdots, A_{2k+1})$ for $c\in \R$,
\item[iii)] $\Gamma_{2k+1}(A_1, \cdots ,A_{2k+1})=0$ if $A_i >0$ for all $i = 1, \cdots, 2k+1$,
\item[iv)] $\Gamma_{2k+1}(A_{\sigma(1)},\cdots, A_{\sigma(2k+1)}) = \Gamma_{2k+1}(A_1,\cdots, A_{2k+1})$ where $\sigma$ is any permutation the set $\{1,2 \cdots, 2k+1\}$,
\item[v)] $|\Gamma_{2k+1}( A_1, \cdots , A_{2k+1}) |\leq (2 \pi)^{2k+1} |A_1| |A_2| \cdots |A_{2k}|$ . Notice that there are only $2k$ terms in the right hand side and not $2k+1$.
\item[vi)] $|\Gamma_{2k+1}(A_1, \cdots , A_{2k+1}) | \leq (2\pi)^{2k+1}|A_1 \cdots A_{2k+1}|^{\frac{2k}{2k+1}}$,\\
$|\Gamma_{2k+1}(A_1, \cdots , A_{2k+1}) | \leq (2 \pi)^{2k+1} \min\limits_{1\leq j\leq 2k+1} |A_1  \cdots A_{j-1}  A_{j+1} \cdots A_{2k+1}|$.
\item[vii)] Let $x_i \in [A_i -1,A_i +1]$, $|A_i| > 2$, $i=1 , \cdots , 2k+1$ then
\begin{multline*}
\left|\Gamma_{2k+1}(x_1,\cdots, x_{2k+1}) - \Gamma_{2k+1}(A_1, \cdots ,A_{2k+1})\right| \\
\leq  C(|A_1|^{2k-1}+ \cdots + |A_{2k+1}|^{2k-1}),
\end{multline*}
for a constant $C= C(k) >0$.
\end{itemize}
\end{lemma}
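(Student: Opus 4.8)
The plan is to extract all seven items from two explicit representations of $\Gamma_{2k+1}$. For the first, I expand the integrand: with $A_S:=\sum_{j\in S}A_j$ for $S\subseteq\{1,\dots,2k+1\}$ one has $\prod_{j=1}^{2k+1}(1-e^{-2\pi i\alpha A_j})=\sum_S(-1)^{|S|}e^{-2\pi i\alpha A_S}$, so, since $m_\alpha(A)=\alpha^{-1}(1-e^{-2\pi i\alpha A})$,
\[
\Gamma_{2k+1}(A_1,\dots,A_{2k+1})=i\sum_S(-1)^{|S|}\int_\R \alpha^{-(2k+1)}e^{-2\pi i\alpha A_S}\,d\alpha .
\]
Because $|m_\alpha(A_j)|\le\min\{2\pi|A_j|,\,2/|\alpha|\}$, the integral defining $\Gamma_{2k+1}$ is absolutely convergent for $k\ge 1$, and the display is legitimate once each term is read as the Fourier transform of the tempered distribution $\mathrm{f.p.}\,\alpha^{-(2k+1)}$; the would-be divergent parts at $\alpha=0$ cancel after summation because $\sum_S(-1)^{|S|}A_S^r=0$ for $0\le r\le 2k$ (every monomial in such a sum has support of size $\le r<2k+1$, so inclusion--exclusion kills it). Now $\mathrm{f.p.}\,\alpha^{-(2k+1)}=\tfrac1{(2k)!}\,\partial_\alpha^{2k}\,\mathrm{p.v.}\,\alpha^{-1}$, whose Fourier transform is an odd homogeneous distribution of degree $2k$, hence a constant times $\xi^{2k-1}|\xi|$; the constant is pinned down by the elementary identity $\int_\R\frac{\sin(2\pi\alpha c)}{\alpha}\,d\alpha=\pi\,\sgn(c)$. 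Substituting back and tracking constants yields the closed form (i), from which I will read off (ii), (iii) and (vii).

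For the second representation I use $m_\alpha(A)=2\pi i\int_0^A e^{-2\pi i\alpha s}\,ds$ to write $\prod_j m_\alpha(A_j)=(2\pi i)^{2k+1}\bigl(\prod_j\sgn A_j\bigr)\int_{J_1\times\cdots\times J_{2k+1}}e^{-2\pi i\alpha(s_1+\cdots+s_{2k+1})}\,ds$, where $J_j$ is the interval with endpoints $0$ and $A_j$ (length $|A_j|$). Integrating in $\alpha$ first---legitimate by inserting a Gaussian factor $e^{-\eta\alpha^2}$, applying Fubini, and letting $\eta\to0$, using the absolute convergence above---the $\alpha$-integration concentrates the $s$-integral on the hyperplane $\{s_1+\cdots+s_{2k+1}=0\}$, and integrating out $s_{2k+1}$ gives
\[
\Gamma_{2k+1}(A_1,\dots,A_{2k+1})=i(2\pi i)^{2k+1}\Bigl(\textstyle\prod_{j=1}^{2k+1}\sgn A_j\Bigr)V(A),\quad V(A):=\bigl|\{(s_1,\dots,s_{2k})\in J_1\times\cdots\times J_{2k}:\ -\textstyle\sum_{j\le 2k}s_j\in J_{2k+1}\}\bigr| ,
\]
with $V(A)\ge0$ and $|i(2\pi i)^{2k+1}|=(2\pi)^{2k+1}$; items (iv), (v) and (vi) follow from this.

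Turning to the items: (iv) is immediate since $\prod_j m_\alpha(A_j)$ is symmetric in its arguments. (ii) follows from (i) via $(cA_S)^{2k-1}|cA_S|=c^{2k}\sgn(c)A_S^{2k-1}|A_S|$ (or directly, by $\alpha\mapsto|c|\alpha$ and, when $c<0$, $\alpha\mapsto-\alpha$ in the integral). For (iii): if all $A_i>0$ then every $A_S>0$, so (i) collapses to a multiple of $\sum_S(-1)^{|S|}A_S^{2k}=0$ (same inclusion--exclusion as above); equivalently, $J_1\times\cdots\times J_{2k+1}\subseteq[0,\infty)^{2k+1}$ touches $\{\sum_j s_j=0\}$ only at the origin, so $V(A)=0$. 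For (v): $V(A)$ is the measure of a slice of the box $J_1\times\cdots\times J_{2k+1}$, hence at most the measure $\prod_{j=1}^{2k}|A_j|$ of its projection onto the first $2k$ coordinates, giving $|\Gamma_{2k+1}(A)|\le(2\pi)^{2k+1}\prod_{j=1}^{2k}|A_j|$. By (iv) the same bound holds with $\prod_{j\ne\ell}|A_j|$ for any fixed $\ell$; minimizing over $\ell$ gives the second inequality in (vi), and taking the geometric mean over the $2k+1$ choices of $\ell$ gives $|\Gamma_{2k+1}(A)|\le(2\pi)^{2k+1}|A_1\cdots A_{2k+1}|^{2k/(2k+1)}$, the first inequality.

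Finally, (vii) follows from (i). Since $|A_i|>2$ and $|x_i-A_i|\le1$, we have $\sgn x_i=\sgn A_i$ and $|x_S-A_S|\le|S|\le 2k+1$; as $\frac{d}{du}(u^{2k-1}|u|)=2k|u|^{2k-1}$, the mean value theorem gives $\bigl|x_S^{2k-1}|x_S|-A_S^{2k-1}|A_S|\bigr|\le 2k(2k+1)(|A_S|+2k+1)^{2k-1}$. Using $|A_S|\le\sum_j|A_j|$ and $2k+1\le\tfrac12\sum_j|A_j|$ (both valid because each $|A_j|>2$), then the power-mean bound $(\sum_j|A_j|)^{2k-1}\le(2k+1)^{2k-2}\sum_j|A_j|^{2k-1}$, and summing over the $2^{2k+1}-1$ non-empty $S$, one obtains $|\Gamma_{2k+1}(x)-\Gamma_{2k+1}(A)|\le C(k)\sum_{j=1}^{2k+1}|A_j|^{2k-1}$. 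I expect the only genuinely delicate step to be part (i): getting the constant and sign in the Fourier transform of $\mathrm{f.p.}\,\alpha^{-(2k+1)}$ exactly right, and justifying the term-by-term interchange of sum and integral there; once the two displayed representations are established, (ii)--(vii) are essentially bookkeeping.
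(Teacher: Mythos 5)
Your proposal is correct in substance and covers all seven items; it is close in spirit to the paper's proof but more systematic. The paper dispatches (i) with the single phrase ``integration by parts,'' uses the convolution\--of\--indicators picture only for (iii), and proves (v) by writing $2k$ of the factors as $m_\alpha(A)=2\pi iA\int_0^1 e^{-2\pi iA\alpha(1-t)}\,dt$ and invoking $\int_\R\frac{1-e^{-2\pi i\alpha x}}{\alpha}\,d\alpha=i\pi\,\sgn(x)$; parts (iv), (vi), (vii) are handled exactly as you do (symmetry of the integrand, geometric mean over the omitted index, mean value theorem applied to the closed form). Your box\--slice representation $\Gamma_{2k+1}(A)=i(2\pi i)^{2k+1}\bigl(\prod_j\sgn A_j\bigr)V(A)$ is the paper's convolution of indicators evaluated at $-\tfrac12\sum_jA_j$, written geometrically; bounding $V(A)$ by the measure of the projection of the box onto the first $2k$ coordinates is a cleaner route to (v) than the paper's $\sgn$ computation and yields the same constant $(2\pi)^{2k+1}$. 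Your inclusion--exclusion identity $\sum_S(-1)^{|S|}A_S^r=0$ for $r\le 2k$ is correct and does double duty (cancellation of the finite parts, and item (iii)).

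The one step you owe is the one you flag yourself: the constant and sign in (i). If you carry it out, $\mathcal{F}\bigl[\mathrm{f.p.}\,\alpha^{-(2k+1)}\bigr](\xi)=\frac{(2\pi i\xi)^{2k}}{(2k)!}(-i\pi\sgn\xi)$ produces an overall prefactor $(-1)^{k+1}\frac{(2\pi)^{2k}\pi}{(2k)!}$ in front of the alternating sum, i.e.\ a parity\--dependent sign that is absent from the formula as displayed in the lemma. Your own volume representation confirms this, since $i(2\pi i)^{2k+1}=(-1)^{k+1}(2\pi)^{2k+1}$ and $V\ge0$, as does the paper's later direct evaluation $\Gamma_{2\ell+1}(-k,\dots,-k,2\ell k)=(-1)^{\ell+1}(2\pi)^{2\ell+1}k^{2\ell}$ in the proof of Lemma \ref{lower_bound_main_term_muskat_higher_order}: for $k=\ell=2$ the displayed formula in (i) evaluates to $+(2\pi)^5k^4$ while the integral equals $-(2\pi)^5k^4$. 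So do pin the sign down rather than asserting that ``tracking constants yields the closed form (i)'' as stated. None of (ii)--(vii) is affected, since they use only absolute values or the combinatorial structure of the closed form, but as written your derivation of the displayed identity is not yet complete at precisely the point where the stated formula needs a correction.
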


\begin{proof}[Proof of Lemma \ref{properties_gamma_higher_dimension}]
$i)$ is obtained by integration by parts. $ii)$ is direct consequence of the explicit formula in part $i)$. $iii)$ is obtained from Definition \ref{definition_gamma_multidimensional} using that 
\begin{equation*}
m_\alpha(A) = \frac{1-e^{-2\pi i\alpha A}}{\alpha} = \frac{2 i e^{-\pi i\alpha A}}{\alpha} \sin(\pi \alpha A),
\end{equation*}
substituting this on Definition \ref{definition_gamma_multidimensional} we get
\begin{multline*}
\Gamma_{2k+1}(A_1,\cdots , A_{2k+1}) =i (2i)^{2k+1}\int_\R e^{-\pi i \alpha(A_1 + \cdots + A_{2k+1})} \\
\times \frac{\sin(\pi \alpha A_1)}{\alpha} \cdots \frac{\sin(\pi \alpha A_{2k+1})}{\alpha}d \alpha.
\end{multline*}
To see that this integral is zero, note that for $A_i > 0$
\begin{equation*}
\mathcal{F}^{-1}\left(\chi_{[-A_i/2,A_i/2]}\right) = \frac{\sin(\pi \alpha A_i)}{\pi\alpha},
\end{equation*}
this means that the integral can be seen as a Fourier transform of a product at the point  $\xi = -\frac{A_1+\cdots+A_{2k+1}}{2}$
\begin{align*}
\Gamma_{2k+1}(A_1,\cdots , A_{2k+1}) &= (2\pi)^{2k+1} (-1)^{k+1} \chi_{[-A_1/2,A_1/2]} * \chi_{[-A_2/2,A_2/2]} * \cdots \\
& \hspace{1cm}* \chi_{[-A_{2k+1}/2,A_{2k+1}/2]}\left(-\frac{A_1+\cdots+A_{2k+1}}{2}\right),
\end{align*}
next, the inclusion $\text{supp} (f* g)\subset \text{supp}(f) + \text{supp}(g)$ imply
\begin{multline*}
\supp\left( \chi_{[-A_1/2,A_1/2]} * \chi_{[-A_2/2,A_2/2]} * \cdots 
* \chi_{[-A_{2k+1}/2,A_{2k+1}/2]} \right)\\
\subset \left[-(|A_1| + \cdots +  |A_{2k+1}|)/2,(|A_1| + \cdots  + |A_{2k+1}|)/2\right].
\end{multline*}
Finally because the convolution of characteristic functions is continuous, we conclude that the convolution is exactly zero $\xi  = -\frac{A_1+\cdots+A_{2k+1}}{2}$ and therefore
\begin{equation*}
\Gamma_{2k+1}(A_1,\cdots , A_{2k+1}) = 0.
\end{equation*}
Part $iv)$ follows directly from the definition. To prove part $v)$ we need to use the integral formula in Definition \ref{definition_gamma_multidimensional} and observe that
\begin{equation*}
\frac{1-e^{-2\pi i\alpha x}}{\alpha} = 2\pi i x \int_0^1 e^{-2\pi i x \alpha(1-t)} dt,
\end{equation*}
applying this to definition \ref{definition_gamma_multidimensional} we get
\begin{align*}
B &= \Gamma_{2k+1}(A_1,A_2, \cdots ,A_{2k+1}) \\
&= i(2\pi i)^{2k} A_1 A_2 \cdots A_{2k} \\
&  \hspace{5mm}\times \int_\R \int_0^1 \cdots \int_0^1\frac{1}{\alpha}  \left(1-e^{-2\pi i\alpha A_{2k+1}}\right)e^{-2\pi i \alpha A_1 (1-t_1)}\\
&  \hspace{5mm}\times e^{-2\pi i \alpha A_2 (1-t_2)}\cdots e^{-2\pi i \alpha A_{2k} (1-t_{2k})} d t_1 \cdots dt_{2k} d\alpha\\
&= i(2\pi i)^{2k} A_1 A_2 \cdots A_{2k} \\
& \times \Big( \int_0^1 \cdots \int_0^1\int_\R \frac{1}{\alpha}\left( e^{-2\pi i \alpha (A_1 (1-t_1) + \cdots + A_{2k}(1-t_{2k}))} -1\right) d\alpha dt_1 \cdots dt_{2k}  \\
&- \int_0^1 \cdots \int_0^1\int_\R \frac{1}{\alpha}\left(1- e^{-2\pi i \alpha (A_1 (1-t_1) + \cdots + A_{2k}(1-t_{2k}) + A_{2k+1}) }\right) d\alpha dt_1 \cdots dt_{2k}\Big),
\end{align*}
next using that $\int_\R \frac{1-e^{-2\pi i \alpha x}}{\alpha} d\alpha = i \pi \sgn(x)$ we get that 
\begin{align*}
|B| &\leq \pi  (2\pi)^{2k} |A_1 \cdots A_{2k}| \int_0^1 \cdots \int_0^1 \Big|-\sgn \left(A_1 (1-t_1) +  \cdots + A_{2k}(1-t_{2k})\right) \\
&\hspace{1cm}+\sgn\left(A_1 (1-t_1)+ \cdots + A_{2k}(1-t_{2k}) + A_{2k+1}\right)\Big| dt_1 \cdots dt_{2k}\\
&\leq  (2\pi)^{2k+1} |A_1 \cdots A_{2k}| .
\end{align*}
Part $vi)$ is obtained from $v)$ and the observation that because of $iv)$ the variable that we omit in the estimate $v)$ can be any variable, and therefore taking the geometric average of the inequalities give us the result. To prove $vii)$ we use that because $\Gamma_{2k+1}(x_1,\cdots,x_{2k+1})$ is differentiable, we can use the mean value theorem to get the estimate. From $i)$ we can bound the partial derivative using
\begin{align*}
\frac{\partial}{\partial x_i}\Gamma_{2k+1}(x_1,\cdots,x_{2k+1}) &\leq 2k (2^{2k+2}-1)(|x_1| + \cdots +|x_{2k+1}|)^{2k-1}\\
&\leq C (|x_1|^{2k-1} + \cdots +|x_{2k+1}|^{2k-1}),
\end{align*}
for a constant $C = C(k) >0$. Next, by the mean value theorem
\begin{align*}
J &= \left|\Gamma_{2k+1}(x_1,\cdots, x_{2k+1}) - \Gamma_{2k+1}(A_1, \cdots ,A_{2k+1})\right|\\
&\leq \sum_{i=1}^{2k+1} \left|\frac{\partial}{\partial x_i}\Gamma_{2k+1}(y_1,\cdots, y_{2k+1})\right|\\
&\leq (2k+1) C (|A_1 + 1|^{2k-1} + \cdots + |A_{2k+1} + 1|^{2k-1})\\
&\leq C_2 (|A_1|^{2k-1} + \cdots + |A_{2k+1}|^{2k-1}),
\end{align*}
for a constant $C_2 = C_2(k) >0$. This concludes the proof of Lemma  \ref{properties_gamma_higher_dimension}
\end{proof}
\noindent

Now we proceed to the proof of Lemma \ref{lemma_estimate_lower_order_terms_higher_order_muskat}.

\begin{proof}[Proof of Lemma \ref{lemma_estimate_lower_order_terms_higher_order_muskat}]
First, taking Fourier transform of  equation \eqref{terms_tk_expasion_taylor_muskat_real_line} and subs\-ti\-tu\-ting that in \eqref{equation_toy_system_muskat_higher_order} we get for $1\leq k < \ell$ 
\begin{equation*}
\hat{f}_{k}(\xi,t) = \frac{(-1)^{k}}{\pi(2k+1)}E\left((2\pi i \xi)\int_\R (m_\alpha \hat{f}_0)^{*(2k+1)} d\alpha\right),
\end{equation*}
next substituting $\varphi = \varphi^{(N)}$ given by \eqref{intial_condition_higher_order_muskat} in \eqref{muskat_problem_truncated_second_iteration_decomposition} we obtain the following formula for $f_0$
\begin{equation*}
\hat{f}_0 = \sum_{j=N}^{(1+\delta)N} \gamma_j e^{-2\pi t |\xi|} (P_{k_j}+P_{2\ell k_j + M}),
\end{equation*}
substituting in $f_k$ we get
\begin{align}
\hat{f}_{k}(\xi,t) &= \frac{(-1)^{k}}{\pi(2k+1)} E\left(\sum\limits_{\substack{N\leq s_\eta \leq (1+\delta)N\\ 1\leq \eta \leq 2k+1}} \sum\limits_{c_\eta^{s_\eta}\in \Lambda(k_{s_\eta})}\gamma_{s_1} \cdots \gamma_{s_{2k+1}} R_{c_1^{s_1}\cdots c_{2k+1}^{s_{2k+1}}}(\xi,t)\right),\label{full_expansion_f2k1}\\
&= \frac{(-1)^{k}}{\pi(2k+1)}\left(E(J_{k}) + E(HF)\right) \label{eqn_Lemma2_5_I2k1},
\end{align}
where
\begin{equation}\label{definition_set_lks}
\Lambda(k_s) = \{\pm k_s, \pm (2\ell k_s + M)\}.
\end{equation}
and
\begin{equation}\label{definition_J1_diagonal_terms}
J_{k}(\xi,t) = \sum\limits_{j =N}^{(1+\delta)N} \sum\limits_{\substack{c_i^j\in \Lambda(k_j)\\ i = 1,\cdots , 2k+1}} \gamma_j^{2k+1} R_{c_1^j\cdots c_{2k+1}^j}(\xi,t) ,
\end{equation}
\begin{equation}\label{definition_HF_higher_order_muskat}
HF(\xi,t)=  \sum\limits_{\substack{N\leq s_\eta \leq (1+\delta)N\\ \text{not all equal }\\ 1\leq \eta \leq 2k+1}} \sum\limits_{c_\eta^{s_\eta}\in \Lambda(k_{s_\eta})}\gamma_{s_1} \cdots \gamma_{s_{2k+1}} R_{c_1^{s_1}\cdots c_{2k+1}^{s_{2k+1}}}(\xi,t),
\end{equation}
\begin{equation}\label{definition_R}
R_{c_1^{s_1}\cdots c_{2k+1}^{s_{2k+1}}}(\xi,t) = (2\pi i \xi)\int_\R (e^{-2\pi t|\cdot|}m_\alpha \chi_{c_1^{s_1}})* \cdots *(e^{-2\pi t|\cdot|}m_\alpha \chi_{c_{2k+1}^{s_{2k+1}}})d\alpha.
\end{equation}
Here $HF$ represent the off-diagonal terms in the sum in  \eqref{full_expansion_f2k1}, which we expect to have high frequency and therefore its norm should decay faster than the lower frequency terms, this should make this term easier to bound. A formula for $R_{c_1^{s_1}\cdots c_{2k+1}^{s_{2k+1}}}(\xi,t)$ that will be useful later is obtained by expanding the convolution and writing it in terms of the function $\Gamma_{2k+1}$ given by definition \ref{definition_gamma_multidimensional}
\begin{align}
R_{c_1^j\cdots c_{2k+1}^j}(\xi,t) &=(2\pi \xi)\int d\xi_1 \cdots \int d \xi_{2k} \Gamma_{2k+1}(\xi-\xi_1, \xi_1-\xi_2, \cdots,\xi_{2k})\nonumber \\
&\hspace{1cm}\times e^{-2\pi t|\xi-\xi_1|}\chi_{c_1^j}(\xi - \xi_1) e^{-2\pi t|\xi_1-\xi_2|}\chi_{c_2^j}(\xi_1 - \xi_2) \nonumber \\
&\hspace{1cm}\times \cdots e^{-2\pi t|\xi_{2k}|}\chi_{c_{2k+1}^j}(\xi_{2k}). \label{expansion_inner_term_higher_order_muskat}
\end{align}
Our main estimate is based in equation \eqref{eqn_Lemma2_5_I2k1} by obtaining estimates in the $\dot{\mathcal{F}}^{m,p}_{q}$ norm for $J_k$ and $HF$ given by the next two Lemmas.

\begin{lemma}[Estimate diagonal terms in Lemma \ref{lemma_estimate_lower_order_terms_higher_order_muskat}]\label{estimate_higher_order_first_part} Let $1\leq k < \ell$, $0< t \leq 1$ and consider $J_{k}$ as defined by \eqref{definition_J1_diagonal_terms}, then
\begin{equation*}
\left\|  E(J_{k})\right\|_{\dot{\mathcal{F}}^{m,p}_{q}} 
\leq \frac{C}{t^{2}} \left(\sum_{j=N}^{(1+\delta)N}  \gamma_j^{(2k+1)q}  k_j^{(2k-2+m)q}  \right)^{1/q},
\end{equation*}
for some constant $C= C(k,\ell,M,p,q)>0$.
\end{lemma}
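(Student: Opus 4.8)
The plan is to bound $E(J_k)$ by first exploiting the diagonal structure of $J_k$: since all the frequencies $c_i^j$ in the sum \eqref{definition_J1_diagonal_terms} come from the \emph{same} block $\Lambda(k_j)$, the frequency support of each $R_{c_1^j\cdots c_{2k+1}^j}(\xi,t)$ localizes near multiples of $k_j$, and by the growth condition \eqref{growth_condition_kn_1} these supports are disjoint across distinct $j$. Hence the $\dot{\mathcal{F}}^{m,p}_q$ norm of $E(J_k)$ decouples into an $\ell^q$ sum over $j$ of the norms of the individual pieces $\gamma_j^{2k+1}\, E\!\left(\sum_{c_i^j\in\Lambda(k_j)} R_{c_1^j\cdots c_{2k+1}^j}\right)$. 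So I would reduce everything to estimating a single block and then summing.

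\textbf{Estimate of a single block.} Fix $j$ and the tuple $(c_1^j,\dots,c_{2k+1}^j)\in\Lambda(k_j)^{2k+1}$. Using the representation \eqref{expansion_inner_term_higher_order_muskat} and property (v) (or the sharper (vi)) of Lemma \ref{properties_gamma_higher_dimension}, I can bound $|\Gamma_{2k+1}(\xi-\xi_1,\dots,\xi_{2k})|$ pointwise on the relevant support by a constant times a product of $2k$ of the frequency magnitudes, each comparable to $k_j$ (the characteristic functions force $\xi-\xi_1,\dots,\xi_{2k}$ to lie within distance $1$ of elements of $\Lambda(k_j)$, which for $k_j$ large are $\sim k_j$ up to the factor $2\ell$ and the additive $M$). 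This yields a pointwise bound $|R_{c_1^j\cdots c_{2k+1}^j}(\xi,t)| \lesssim |\xi|\, k_j^{2k}\, e^{-2\pi t c\,|\xi|}$-type expression after carrying out the $2k$ trivial (bounded-domain) $\xi_i$-integrations; the output frequency $\xi$ is itself confined to an $O(1)$-neighborhood of a point of size $\lesssim \ell k_j + M \lesssim k_j$. Then the operator $E$, being a time integral of the heat-type semigroup $e^{-(t-\tau)\Lambda}$ against a function with frequency $\sim k_j$, contributes a factor $\lesssim \int_0^t e^{-(t-\tau)2\pi k_j}d\tau \lesssim 1/k_j$; combined with the size of the support in $\xi$ and the weight $|\xi|^{mp}\sim k_j^{mp}$, taking $L^p$ then $\ell^q$ gives for the block a bound of order $\gamma_j^{2k+1}\, k_j^{2k-1+m}/k_j = \gamma_j^{2k+1} k_j^{2k-2+m}$, matching the claimed summand. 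The extra $1/t^2$ (rather than a clean $1/k_j$) comes from being careful: some terms in the $\Gamma_{2k+1}$ expansion where partial sums of the $A_i$'s cancel need the time decay $e^{-2\pi t |\cdot|}$ rather than the semigroup gain to produce integrable factors in $\alpha$; estimating $\int_\R |m_\alpha(A)|\cdots$ via property (v) and the identity $\int_\R \frac{1-e^{-2\pi i\alpha x}}{\alpha}\,e^{-2\pi t|\alpha|\cdot(\text{stuff})}d\alpha$ loses powers of $1/t$, accounting for $t^{-2}$ after two such losses. One then assembles $\bigl\|E(J_k)\bigr\|_{\dot{\mathcal{F}}^{m,p}_q}^q \lesssim t^{-2q}\sum_{j=N}^{(1+\delta)N}\gamma_j^{(2k+1)q} k_j^{(2k-2+m)q}$, which is the desired inequality after taking $q$-th roots, with the constant absorbing the finitely many ($4^{2k+1}$) choices of the tuple $(c_1^j,\dots,c_{2k+1}^j)$ and the factors of $2\ell$, $M$.

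\textbf{Main obstacle.} The delicate point is not the bookkeeping but getting the \emph{right power} of $k_j$: the naive bound $|\Gamma_{2k+1}|\lesssim \prod|A_i|$ over all $2k+1$ arguments would give $k_j^{2k+1}$ and lose too much, so one must use property (v)/(vi) which saves one full power of $k_j$ (only $2k$ factors), and one must also use the $1/k_j$ gain from the semigroup integral in $E$ to land at exponent $2k-2+m$ rather than $2k-1+m$ or worse. Keeping track of which of these gains applies to which term of the explicit $\Gamma_{2k+1}$ formula — in particular distinguishing the generic terms (where $E$ gives $1/k_j$) from the resonant/cancelling terms (where one instead spends the $t$-decay and picks up negative powers of $t$) — is the real content. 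Everything else (disjointness of supports via \eqref{growth_condition_kn_1}, the $\ell^q$ decoupling, the finitely many tuples) is routine.
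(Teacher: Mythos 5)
Your overall strategy coincides with the paper's: decouple the $j$-blocks using the disjointness of supports from \eqref{growth_condition_kn_1}, bound $\Gamma_{2k+1}$ by $Ck_j^{2k}$ via properties (v) and (vii) of Lemma \ref{properties_gamma_higher_dimension}, and trade the semigroup decay for negative powers of $k_j$ at the cost of $t^{-2}$. But there is a genuine gap at the crux of the argument: you never establish a \emph{lower} bound on the output frequency. You only assert that $\xi$ lies near a point "of size $\lesssim \ell k_j+M$" and then freely use "frequency $\sim k_j$" to extract both the factor $1/k_j$ from $\int_0^t e^{-2\pi(t-\tau)|\xi|}\,d\tau$ and the decay $e^{-2\pi t c|\xi|}\lesssim (tk_j)^{-2}$. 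Both of these require $|\xi|\approx |c_1^j+\cdots+c_{2k+1}^j|\gtrsim k_j$, and this is not automatic: the $c_i^j$ range over $\{\pm k_j,\pm(2\ell k_j+M)\}$ and their signed sum could a priori be very small. The paper proves $|c_1^j+\cdots+c_{2k+1}^j|\geq k_j/2$ for every tuple precisely when $1\leq k<\ell$ (Lemma \ref{lemma_higher_order_sums_muskat}, part ii), via a combinatorial argument about signed sums of $\{\pm1,\pm2\ell\}$ together with condition \eqref{growth_condition_kn_3}). This is the only place the hypothesis $k<\ell$ enters, and your argument never uses that hypothesis — a decisive warning sign, since for $k=\ell$ the sum can equal $\pm M$ (the resonance) and the stated upper bound is \emph{false}: $E(J_\ell)$ is bounded below by $C\sum_j\gamma_j^{2\ell+1}k_j^{2\ell-1}$, which diverges while the would-be right-hand side tends to zero. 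Without the lower bound on $|c_1^j+\cdots+c_{2k+1}^j|$ your count stalls at $k_j^{2k+m}$ instead of $t^{-2}k_j^{2k-2+m}$.

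A secondary inaccuracy: your explanation of the $t^{-2}$ as arising from $\alpha$-integrals against $e^{-2\pi t|\alpha|\cdot(\cdots)}$ in "resonant" terms of the $\Gamma_{2k+1}$ expansion does not correspond to any actual mechanism here — the $\alpha$-integration is entirely absorbed into $\Gamma_{2k+1}$ and carries no $t$-dependence. In the paper the $t^{-2}$ comes from the single elementary step $e^{-\pi tk_j/2}\leq C(tk_j)^{-2}$, where the exponential is produced by combining $e^{-2\pi(t-\tau)|\xi|}$ with the factors $e^{-2\pi\tau|c_i^j|}$ carried by the free evolution, and again only works because $|\xi|\geq k_j/2$ on the support. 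Once you add the combinatorial lemma and this bookkeeping, your block estimate and the final $\ell^q$ summation go through as in the paper.
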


\begin{lemma}[Estimate off diagonal terms in Lemma  \ref{lemma_estimate_lower_order_terms_higher_order_muskat}]\label{lemma_higher_frequency_muskat_higher_order}
Let $HF$ as defined by \eqref{definition_HF_higher_order_muskat} and $0<t\leq 1$, then
\begin{equation*}
\|E(HF)\|_{\dot{\mathcal{F}}^{m,p}_q}\leq\frac{C}{t^{m+2 }k_N}\left(\sum_{j=N}^{(1+\delta)N} \gamma_j k_j^{\frac{2k-1}{2k+1}}\right)^{2k+1},
\end{equation*}
for some constant $C= C(k,\ell,M,p,q)>0$.
\end{lemma}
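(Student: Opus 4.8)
The plan is to decompose $E(HF)$ into elementary building blocks, localise each one in frequency, reduce to a pointwise estimate for a single block via the triangle inequality in $\dot{\mathcal{F}}^{m,p}_q$, and then bound that block by combining the algebraic bounds for $\Gamma_{2k+1}$ from Lemma~\ref{properties_gamma_higher_dimension} with the smoothing of the Duhamel operator $E$. Concretely, inserting \eqref{expansion_inner_term_higher_order_muskat} into \eqref{definition_HF_higher_order_muskat} writes $HF$ as a finite sum, over non-constant multi-indices $\vec{s}=(s_1,\dots,s_{2k+1})\in[N,(1+\delta)N]^{2k+1}$ and over choices $\vec{c}=(c_1,\dots,c_{2k+1})$ with $c_\eta\in\Lambda(k_{s_\eta})$, of the terms $\gamma_{s_1}\cdots\gamma_{s_{2k+1}}\,R_{\vec{c}}$. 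Because each $\chi_{c_\eta}$ in \eqref{expansion_inner_term_higher_order_muskat} restricts a difference variable to $[c_\eta-1,c_\eta+1]$ and those $2k+1$ differences add up to $\xi$, the block $R_{\vec{c}}(\cdot,\tau)$, hence $E(R_{\vec{c}})(\cdot,t)$, is supported in $\{|\xi-\Xi_{\vec{c}}|\le 2k+1\}$ with $\Xi_{\vec{c}}:=\sum_\eta c_\eta$. Since (as explained below) $|\Xi_{\vec{c}}|$ is large, this set meets boundedly many dyadic annuli and has bounded length, so $\|E(R_{\vec{c}})\|_{\dot{\mathcal{F}}^{m,p}_q}\lesssim_{k,\ell,p,q}|\Xi_{\vec{c}}|^{m}\|\widehat{E(R_{\vec{c}})}(\cdot,t)\|_{L^\infty}$. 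By the triangle inequality, and since each $\vec{s}$ admits only $4^{2k+1}$ choices of $\vec{c}$, it then suffices to prove the single-block bound $|\Xi_{\vec{c}}|^{m}\|\widehat{E(R_{\vec{c}})}(\cdot,t)\|_{L^\infty}\le C\,t^{-m-2}k_N^{-1}\prod_{\eta}k_{s_\eta}^{(2k-1)/(2k+1)}$: multiplying by $\prod_\eta\gamma_{s_\eta}$ and summing over $\vec{c}$ and over all $\vec{s}$ (the diagonal ones only enlarge the sum) reconstructs $\bigl(\sum_{j=N}^{(1+\delta)N}\gamma_j k_j^{(2k-1)/(2k+1)}\bigr)^{2k+1}$ on the right.

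There are two analytic inputs. The first is a frequency-separation bound, $|\Xi_{\vec{c}}|\gtrsim k_N$. Writing $\Xi_{\vec{c}}=\sum_j\nu_j$ with $\nu_j=(a_j+2\ell b_j)k_j+b_j M$ the net contribution of scale $k_j$ (signed counts $a_j,b_j$, $\sum_j(|a_j|+|b_j|)=2k+1$), let $j_\star$ be the top scale with $\nu_{j_\star}\neq0$. The possibility $a_{j_\star}+2\ell b_{j_\star}=0$, $b_{j_\star}\neq0$ is excluded, since it would force $|a_{j_\star}|=2\ell|b_{j_\star}|$, i.e. at least $2\ell+1$ frequencies at scale $k_{j_\star}$, which with the (at least one) frequency at another scale exceeds the total $2k+1\le 2\ell-1$ — and this is exactly where the hypothesis $k<\ell$ is used. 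Hence $|a_{j_\star}+2\ell b_{j_\star}|\ge1$, so $|\nu_{j_\star}|\ge k_{j_\star}-(2k+1)M\ge\tfrac12 k_{j_\star}$ by \eqref{growth_condition_kn_3}, while \eqref{growth_condition_kn_1} renders the lower scales negligible, giving $|\Xi_{\vec{c}}|\gtrsim k_{j_\star}\ge k_N$. The second input is a Duhamel gain: with $A_1,\dots,A_{2k+1}$ the arguments of $\Gamma_{2k+1}$ in \eqref{expansion_inner_term_higher_order_muskat}, one has $\Sigma:=\sum_\eta|A_\eta|\ge|\xi|$ on the support, and the elementary estimate $\int_0^t e^{-2\pi(t-\tau)|\xi|}e^{-2\pi\tau\Sigma}\,d\tau\lesssim \Sigma^{-1}e^{-\pi t|\xi|}$ holds; moreover on the support $\Sigma\ge\tfrac12\sum_\eta|c_\eta|\ge\tfrac12 k_{j^{*}}$, where $j^{*}=\max_\eta s_\eta$ and, by \eqref{growth_condition_kn_1}, the index $\eta_0$ of the largest $|c_\eta|$ sits at scale $k_{j^{*}}$.

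With these in hand the single-block bound follows. From \eqref{expansion_inner_term_higher_order_muskat}, estimating $|\Gamma_{2k+1}(A_1,\dots,A_{2k+1})|$ by Lemma~\ref{properties_gamma_higher_dimension}(vi) (drop the largest argument, so $|\Gamma_{2k+1}|\lesssim\prod_{\eta\neq\eta_0}|c_\eta|$ on the support), bounding the $\xi_1,\dots,\xi_{2k}$-integration volume by $2^{2k}$, and using the Duhamel gain, one gets $|\widehat{E(R_{\vec{c}})}(\xi,t)|\lesssim |\xi|\,e^{-\pi t|\xi|}\,k_{j^{*}}^{-1}\prod_{\eta\neq\eta_0}|c_\eta|$. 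Since $x^{m+1}e^{-\pi t x}\le C_m\,t^{-m-2}x^{-1}$, on the support $|\xi|\sim|\Xi_{\vec{c}}|\ge\tfrac12 k_N$, and $|c_\eta|\le(2\ell+1)k_{s_\eta}$ with $\prod_{\eta\neq\eta_0}k_{s_\eta}=\bigl(\prod_\eta k_{s_\eta}\bigr)/k_{j^{*}}$, this yields $|\Xi_{\vec{c}}|^{m}\|\widehat{E(R_{\vec{c}})}\|_{L^\infty}\lesssim t^{-m-2}k_N^{-1}k_{j^{*}}^{-2}\prod_\eta k_{s_\eta}$, and one closes using $\prod_\eta k_{s_\eta}/k_{j^{*}}^2\le\prod_\eta k_{s_\eta}^{(2k-1)/(2k+1)}$, which is precisely $\prod_\eta k_{s_\eta}^{2/(2k+1)}\le k_{j^{*}}^2$ (each of the $2k+1$ factors being $\le k_{j^{*}}$). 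Summing over $\vec{c}$ and $\vec{s}$ as above completes the argument.

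The main obstacle is the frequency-separation bound $|\Xi_{\vec{c}}|\gtrsim k_N$: it is here that $k<\ell$ is indispensable (preventing $2\ell+1$ frequencies from piling up at a single scale and producing an uncontrolled low-frequency output), and that the strong growth \eqref{growth_condition_kn_1} together with \eqref{growth_condition_kn_3} is needed, through a careful bookkeeping of how the at most $2k$ lower-scale frequencies can partially cancel the dominant one. Once that separation and the Duhamel gain are secured, the remaining steps are routine manipulations of the explicit formula for $\Gamma_{2k+1}$.
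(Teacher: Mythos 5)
Your proof is correct and follows essentially the paper's route: after the triangle inequality you reduce to the same per-block estimate as the paper's Lemma \ref{lemma_estimate_R_higher_frequency}, and you prove it with the same ingredients --- the bounds on $\Gamma_{2k+1}$ from Lemma \ref{properties_gamma_higher_dimension}, the Duhamel smoothing, and the frequency separation $|c_1^{s_1}+\cdots+c_{2k+1}^{s_{2k+1}}|\gtrsim k_N$ of Lemma \ref{lemma_size_of_sums_higher_order_muskat}(i) --- differing only in bookkeeping (you extract one factor $k_{j^*}^{-1}$ from the $\bigl(\sum_\eta |c_\eta^{s_\eta}|\bigr)^{-1}$ Duhamel gain and a second from dropping the largest argument of $\Gamma_{2k+1}$, whereas the paper uses the geometric-mean bound $|\Gamma_{2k+1}|\lesssim|c_1^{s_1}\cdots c_{2k+1}^{s_{2k+1}}|^{2k/(2k+1)}$ together with the product lower bound of Lemma \ref{lemma_size_of_sums_higher_order_muskat}(i)--(iii)). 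The only step you should make explicit is the existence of your top scale $j_\star$, i.e.\ that the net signed counts $a_j,b_j$ cannot all vanish; this follows from the parity observation that their total sum is congruent to $2k+1$ modulo $2$, which the paper records explicitly in the proof of Lemma \ref{lemma_size_of_sums_higher_order_muskat}.
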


\begin{proof}[Proof of Lemma \ref{estimate_higher_order_first_part}]
From equation  \eqref{expansion_inner_term_higher_order_muskat} we get that in order to estimate $R_{c_1^j \cdots c_{2k+1}^j}$ we need to estimate $\Gamma_{2k+1}(\xi-\xi_1, \xi_1-\xi_2, \cdots,\xi_{2k})$ in the region where \begin{equation*}
\chi_{c_1^j}(\xi - \xi_1) \chi_{c_2^j}(\xi_1 - \xi_2) \cdots \chi_{c_{2k+1}^j}(\xi_{2k})\neq 0.
\end{equation*}
Here we have two cases: first, when all $c_i^j$,  $i=1, \cdots, 2k+1$ have the same sign, then in the region we are interested we have that all $\xi - \xi_1$, $\xi_1 -\xi_2$, $\cdots$, $\xi_{2k}$ have the same sign and therefore we can apply Lemma \ref{properties_gamma_higher_dimension} part iii) to obtain that  $R_{c_1^j \cdots c_{2k+1}^j} = 0$.

On the other hand, when not all $c_i^j$, $i=1, \cdots, 2k+1$ have the same sign the estimate is more delicate. Because $|c_i^j|\leq 2\ell k_j +M$ for $c_i^j \in \Lambda(k_j)$, we can use Lemma \ref{properties_gamma_higher_dimension} parts v) and vii) to get the estimate 
\begin{align}
|\Gamma_{2k+1}(\xi-\xi_1, \xi_1-\xi_2, \cdots,\xi_{2k})| &\leq  |\Gamma_{2k+1}(c_1^j,\cdots,c_{2k+1}^j)| \nonumber\\
& \hspace{1cm} + O(|c_1^j|^{2k-1}+\cdots + |c_{2k+1}^j|^{2k-1}) \nonumber\\
&\leq  C (2\ell k_j + M)^{2k} + O(|k_j|^{2k-1}) \nonumber \\	
&\leq  C k_j^{2k} , \label{easy_upper_bound_Gamma}
\end{align}
for a constant $C = C(k,M,\ell) >0$. By applying \eqref{easy_upper_bound_Gamma} to \eqref{expansion_inner_term_higher_order_muskat} we get
\begin{align}
|R_{c_1^j\cdots c_{2k+1}^j}| &\leq C (2\pi |\xi|)k_j^{2k} \int_\R \cdots \int_\R (e^{-2\pi t|\xi - \xi_1|}\chi_{c_1^j}(\xi-\xi_1))(e^{-2\pi t|\xi_1 - \xi_2|}\nonumber \\
 & \hspace{1cm}\times\chi_{c_2^j}(\xi_1-\xi_2)) \cdots (e^{-2\pi t|\xi_{2k}|}\chi_{c_{2k+1}^j}(\xi_{2k})) d\xi_1 \cdots d\xi_{2k} \nonumber \\
&\leq C |\xi|
k_j^{2k} e^{-2\pi t (|c_1^j|+\cdots+|c_{2k+1}^j|-(2k+1) )}  h_{c_1^j \cdots c_{2k+1}^j}(\xi), \label{estimate_R_lemma_lower_order_terms_non_periodic}
\end{align}
where
\begin{equation}\label{definition_h_higher_order_muskat}
h_{c_1^j \cdots c_{2k+1}^j}(\xi)=\left(\chi_{c_1^j}*\chi_{c_2^j}* \cdots * \chi_{c_{2k+1}^j}\right)(\xi).
\end{equation}

To continue with the estimate we need two Lemmas. The first one provides a precise notion on how a convolution of characteristic functions can be compared with a single characteristic function and will be used to estimate the term  $h_{c_1^j \cdots c_{2k+1}^j}(\xi)$. The second one give us an estimate for the size of the sum $c_1+ \cdots +c_{2k+1}$.

\begin{lemma}[Convolutions of characteristic functions]\label{estimate_iterated_convolution_characteristic_functions}
Let $c_1, \cdots, c_k \in \R$ and $\chi_A = 1_{[A-1,A+1]}$,  then
\begin{itemize}
\item[(i)]
$
\chi\left(\xi-(c_1 + \cdots +c_{k})\right)\leq \chi_{c_1}* \chi_{c_2}* \cdots * \chi_{c_{k}}(\xi) \leq 2^k \chi\left(\frac{\xi-(c_1 + \cdots +c_{k})}{k}\right),
$
\item[(ii)] If $|c_1+ \cdots +c_k| > k+1$, $p\geq 1$, $q \geq 1$, $m\geq 0$ then
\begin{equation*}
\|\chi_{c_1}* \chi_{c_2}* \cdots * \chi_{c_{k}}\|_{\dot{\mathcal{F}}^{m,p}_q}\leq C |c_1 + \cdots + c_{k} + k|^{m},
\end{equation*}
for a constant $C= C(k,p) > 0$.
\end{itemize}
\end{lemma}
\begin{proof}[Proof of Lemma \ref{estimate_iterated_convolution_characteristic_functions}]
For the lower bound in part (i) the key fact is the following inequality
\begin{equation*}
(\chi_A * \chi_B)(\xi) = max\{2-(\xi-A-B),0\} \geq \chi_{A+B}(\xi),
\end{equation*}
and by iterating this inequality we obtain the lower bound. For the upper bound we need two observations: first, for $A, B \subset \R$ we have the inclusion
\begin{equation*}
\supp \left(1_A * 1_B\right) \subset A+B = \{a+b : a\in A, b\in B\},
\end{equation*}
second, the convolution of characteristic functions can be bounded by
\begin{align*}
\chi\left(\frac{\cdot-A}{a}\right) * \chi\left(\frac{\cdot-B}{b}\right) &= \int_\R \chi\left(\frac{\xi - y -A}{a}\right) \chi\left(\frac{y-B}{b}\right) dy\\ &\leq \int_\R \chi\left(\frac{y-B}{b}\right) dy\\
 &= 2b,
\end{align*}
and by symmetry $\chi(\frac{\cdot-A}{a}) * \chi(\frac{\cdot-B}{b}) \leq 2\min\{a,b\}$, iterating this result we obtain the upper bound
\begin{equation*}
\chi_{c_1}* \chi_{c_2}* \cdots * \chi_{c_{k}} \leq 2^k \chi\left(\frac{\xi-(c_1 + \cdots +c_{k})}{k}\right),
\end{equation*}
this completes the proof of part (i). For part (ii) we use the upper bound obtained in part (i)
\begin{align*}
H &= \|\chi_{c_1}* \chi_{c_2}* \cdots * \chi_{c_{k}}\|_{\dot{\mathcal{F}}^{m,p}_{q}} \\
&= \left(\sum_{n\in \Z} \left(\int_{C_n} |\xi|^{mp} |\chi_{c_1}* \chi_{c_2}* \cdots * \chi_{c_{k}}|^p d\xi\right)^{q/p}\right)^{1/q}\\
&\leq \Bigg(\sum_{n\in \Z} \Big(\int_{C_n} |c_1 + \cdots + c_{k} + k|^{mp}\\
& \hspace{1cm}\times \left|2^{k} \chi\left(\frac{\xi-(c_1+ \cdots +c_{k})}{k}\Big)\right|^p d\xi\right)^{q/p}\Bigg)^{1/q}\\
&\leq  2^{k} |c_1 + \cdots + c_{k} + k|^{m} \\
& \hspace{1cm}\times \left( R \left(\int_{\R}  \left| \chi\left(\frac{\xi-(c_1+ \cdots +c_{k})}{k}\right) \right|^p d\xi\right)^{q/p}\right)^{1/q}\\
&\leq  2^{k} R^{1/q} (2k)^{1/p} |c_1 + \cdots + c_{k} + k|^{m} .
\end{align*}
where $R$ is the number of dyadic intervals that intersect the interval
\[
[c_1 + \cdots + c_k - k , c_1+ \cdots + c_k + k],
\]
here we use our assumption that $|c_1 + \cdots + c_k |\geq k +1$ to conclude that $R \leq \log_2 k + 1$, which give us part (ii) and conclude the proof of Lemma \ref{estimate_iterated_convolution_characteristic_functions}.
\end{proof}

\begin{lemma}\label{lemma_higher_order_sums_muskat}
Fix $j\in\N$ and let $c_i \in \Lambda(k_j) = \{\pm k_j, \pm(2\ell k_j + M)\}$, $i=1, \cdots, 2k+1$, with $\{k_n\}_{n=0}^{\infty}$ satisfying \eqref{growth_condition_kn_1} and \eqref{growth_condition_kn_3}, and suppose that not all $c_i$ have the same sign, then
\begin{itemize}
\item[i)] $c_1 + \cdots + c_{2k+1} = \pm M$ if and only if $k=\ell$, exactly one of them is equal to $\pm (2\ell k_j+M)$ and all the others equal to $\mp k_j$.
\item[ii)] $|c_1 + \cdots +c_{2k+1}| \geq k_j/2$,
\item[iii)] $|c_1|+\cdots+|c_{2k+1}|-|c_1 + \cdots +c_{2k+1}| \geq 2k_j$,

\end{itemize}
\end{lemma}

\begin{proof}[Proof of Lemma \ref{lemma_higher_order_sums_muskat}]
For part i) because of condition  \eqref{growth_condition_kn_3} the assumption $c_1+ \cdots + c_{2k+1} = \pm M$ tell us that we need an odd number of terms of the form $\pm (2\ell k_j+M)$, call this number $d$. When we add up all the terms of the form $\pm (2\ell k_j+M)$ the sum must be of the form  $2 n \ell k_j \pm M$, $n\in \Z$ and $n\neq 0$ because we have an odd number of such terms. The remaining terms are of the form $\pm k_j$ and its sum can be written as $m k_j$ where $|m|\leq  2k + 1 -d$ and equal to $2k+1 -d$ if and only if all the terms of the form $\pm k_j$ have the same sign. Then we have the following bound in the size of  $c_1+ \cdots + c_{2k+1}$ 
\begin{multline*} 
|c_1+ \cdots + c_{2k+1}| = |2 n \ell k_j \pm M +  m k_j| \geq (2 |n| \ell - m)k_j -M\\
\geq (2 \ell - (2k + 1 - d) )k_j -M = ((2\ell - 2k) + d-1)k_j -M.
\end{multline*}
We get that the sum of all the terms can only be equal to $M$ if both $(2\ell - 2k)+ (d-1)=0$ and $(2 |n| \ell - m)=0$. From the first condition, because both terms are non negative we get that $k = \ell$ and $d=1$, which also imply that $|n| =1$. From the second condition we get $m = 2\ell$, but that is only possible if all the terms of the form $\pm k_j$ have the same sign. It remains to check  only 4 cases for the terms  $c_1, \cdots, c_{2\ell+1}$
\begin{itemize}
\item[(a)] one equal to $(2\ell k_j + M)$ and all other equal to $k_j$,
\item[(b)] one equal to $(2\ell k_j + M)$ and all other equal to $-k_j$,
\item[(c)] one equal to $-(2\ell k_j + M)$ and all other equal to $k_j$,
\item[(d)] one equal to $-(2\ell k_j + M)$ and all other equal to $-k_j$.
\end{itemize}
Only cases (b) and (c) satisfy that the sum is equal to $\pm M$, which concludes the proof of part i). To prove part ii) we first need that it is impossible to write zero as the sum of $2k+1$ terms using only $\{\pm 1, \pm 2\ell\}$ for $k <\ell$, this can be done in a similar way to part i). Next, we write $c_i = a_i k_j + \epsilon_i$, where $a_i\in \{\pm 1, \pm 2\ell \}$ and $\epsilon_i \in \{-M,0,M\}$. Then
\[
c_1 + \cdots + c_{2k+1} = 
(a_1 + \dots +a_{2k+1}) k_j + (\epsilon_1 + \cdots + \epsilon_{2k+1}).
\]
By the previous observation we see that $|a_1 + \cdots + a_{2k+1}| \geq 1$ and therefore 
\begin{align*}
|c_1 + \cdots + c_{2k+1}| &\geq k_j - |\epsilon_1 + \cdots + \epsilon_{2k+1}|\\
&\geq k_j - (2k+1)M \\
&\geq k_j/2,
\end{align*}
here we used assumption \eqref{growth_condition_kn_3} on the sequence $k_j$. This completes the proof of part ii). Part iii) is consequence of the following proposition.
\begin{proposition}\label{lemma_triangle_inequality_special}
Let $n \geq 2$, $x_1, \cdots,x_n \in \R\setminus \{0\}$ and suppose that not all of them have the same sign, then
\[
|x_1 + \cdots + x_n| \leq |x_1|+ \cdots +|x_n| - 2|x_j|,
\]
for some $j\in \{1, \cdots, n\}$.
\end{proposition}
\begin{proof}[Proof of Proposition \ref{lemma_triangle_inequality_special}]
Without loss of generality we can assume than $\{x_i\}_{i=1}^n$ are in ascending order and let $m\in \{1, \cdots, n\}$ such that $x_m <0$ and $x_{m+1}>0$. Then we can write
\[
|x_1 + \cdots + x_n| = \Big| |x_1+ \cdots + x_m| - |x_{m+1}+ \cdots + x_n|\Big|.
\]
Next, assume that $|x_1+ \cdots + x_m| - |x_{m+1}+ \cdots + x_n| \geq 0$ (the case $<0$ is analogous) then
\[
|x_1 + \cdots + x_n| = |x_1+ \cdots + x_m| - |x_{m+1}+ \cdots + x_n|,
\]
using triangle inequality we get
\begin{align*}
|x_1 + \cdots + x_n| &\leq |x_1+ \cdots + x_m| + \left|x_{m+1}+ \cdots + x_{n-1}\right| - |x_n|\\
&\leq  |x_1|+ \cdots + |x_m| + |x_{m+1}|+ \cdots + |x_{n-1}| - |x_n|\\
& =   |x_1|+ \cdots + |x_{n}| - 2|x_n|.
\end{align*}
This concludes the proof of Proposition \ref{lemma_triangle_inequality_special}.
\end{proof}

Continuation of the proof of Lemma  \ref{lemma_higher_order_sums_muskat}. Because by assumption not all $c_1, \cdots, c_{2k+1}$ have the the same sign, then Proposition \ref{lemma_triangle_inequality_special} tell us
\[
|c_1 + \cdots + c_{2k+1}| \leq |c_1| + \cdots +|c_{2k+1}| - 2 |c_j|,
\]
for some $1\leq j\leq 2k+1$, then we get 
\[
|c_1| + \cdots +|c_{2k+1}| -|c_1 + \cdots + c_{2k+1}| \geq 2 |c_j| \geq 2k_j.
\]
This concludes the proof of Lemma \ref{lemma_higher_order_sums_muskat}.
\end{proof}
\noindent
Continuation of proof Lemma \ref{estimate_higher_order_first_part}. First, apply Lemma \ref{estimate_iterated_convolution_characteristic_functions} part (i) to obtain that in the support of $h_{c_1^j \cdots c_{2k+1}^j}$ given by \eqref{definition_h_higher_order_muskat} we can bound
\begin{equation*}
|c_1^j + \cdots + c_{2k+1}^j - 2k-1|\leq |\xi| \leq |c_1^j + \cdots + c_{2k+1}^j + 2k+1|,
\end{equation*}
next, substituting \eqref{estimate_R_lemma_lower_order_terms_non_periodic} in \eqref{definition_J1_diagonal_terms} and using that $R_{c_1^j\cdots c_{2k+1}^j} = 0$ when all $c_i^j, i=1,\cdots, 2k+1$ have the same sign we obtain the bound
\begin{align*}
\left|\widehat{E(J_{k})}\right| &\leq C\sum\limits_{j = N}^{(1+\delta)N} \sum_{\substack{c_\eta^j \in  \Lambda(k_j) \\ 1\leq \eta \leq 2k+1\\ \text{not all } c_\eta^j\\ \text{ with the same sign}}}\gamma_j^{2k+1}  |\xi| \int_0^t e^{-2\pi (t-\tau)|\xi|} k_j^{2k}\\
& \hspace{1cm} \times e^{-2\pi \tau (|c_1^j|+\cdots+|c_{2k+1}^j|-(2k+1) )} d\tau h_{c_1^j \cdots c_{2k+1}^j}(\xi) \\
&\leq C\sum\limits_{j = N}^{(1+\delta)N} \sum_{\substack{c_\eta^j \in  \Lambda(k_j) \\ 1\leq \eta \leq 2k+1\\ \text{not all } c_\eta^j\\ \text{ with the same sign}}}\gamma_j^{2k+1} |c_1^j + \dots +c_{2k+1}^j+2k+1| k_j^{2k} \\
& \hspace{2cm}\times e^{-2\pi t (|c_1^j + \cdots+c_{2k+1}^j| - 2k-1)} \\
& \hspace{2cm}\times\int_0^t e^{-2\pi \tau (|c_1^j|+ \cdots +|c_{2k+1}^j| - |c_1^j + \cdots +c_{2k+1}^j|)} d\tau \\
& \hspace{2cm}\times  h_{c_1^j \cdots c_{2k+1}^j}(\xi), 
\end{align*}
now by definition of the set $\Lambda(k_j)$ we can bound 
\begin{equation*}
|c_1^j + \cdots +c_{2k+1}^j|\leq (2k+1) (2\ell k_j +M) \leq (2\ell+1)^2 k_j
\end{equation*}
therefore by applying Lemma \ref{lemma_higher_order_sums_muskat} part ii) and iii) we get 
\begin{align}
\left|\widehat{E(J_{k})}\right| &\leq C\sum\limits_{j=N}^{(1+\delta)N} \sum\limits_{\substack{c_\eta^j \in  \Lambda(k_j) \\ 1\leq \eta \leq 2k+1\\ \text{not all } c_\eta^j\\ \text{ with the same sign}}} \gamma_j^{2k+1} (2\ell+1)^2 k_j k_j^{2k}\nonumber \\
&  \hspace{1cm} \times e^{-2\pi t (k_j/2 - 2k-1)} \int_0^t e^{-2\pi \tau (2k_j)} d\tau h_{c_1^j \cdots c_{2k+1}^j}(\xi)\nonumber \\
&\leq C\sum\limits_{j=N}^{(1+\delta)N} \sum\limits_{\substack{c_\eta^j \in  \Lambda(k_j) \\ 1\leq \eta \leq 2k+1\\ \text{not all } c_\eta^j\\ \text{ with the same sign}}} \gamma_j^{2k+1} k_j^{2k} \nonumber \\
&  \hspace{1cm} \times e^{-2\pi t (k_j/2 - 2k-1)} \left(1- e^{-2\pi t (2k_j)}\right) h_{c_1^j \cdots c_{2k+1}^j}(\xi) \nonumber \\
&= C \sum_{j=N}^{(1+\delta)N} \gamma_j^{2k+1} H_j(t) B_j(\xi), \label{estimate_fourier_J2k1}
\end{align}
where 
\begin{equation}\label{bound_Hj}
H_j(t) = k_j^{2k} e^{-2\pi t (k_j/2 - 2k-1)}\left(1-e^{-2\pi t (2k_j)}\right) \leq C  k_j^{2k } e^{-\pi t k_j/2},
\end{equation}
and 
\begin{equation*}
B_j (\xi)= \sum\limits_{\substack{c_\eta^j \in  \Lambda(k_j) \\ 1\leq \eta \leq 2k+1\\ \text{not all } c_\eta^j\\ \text{ with the same sign}}} h_{c_1^j \cdots c_{2k+1}^j}.
\end{equation*}
Using \eqref{estimate_fourier_J2k1} we get the following bound for the $\dot{\mathcal{F}}^{m,p}_q$ norm of $E(J_{k})$
\begin{equation*}
\|E(J_{k})\|_{\dot{\mathcal{F}}^{m,p}_q}^q
\leq C \sum_{r\in\Z} \left(
\int_{C_r} |\xi|^{mp}\left|\sum_{j=N}^{(1+\delta)N} \gamma_j^{2k+1} H_j(t) B_j(\xi)\right|^p\right)^{q/p},
\end{equation*}
now notice that because of condition \eqref{growth_condition_kn_1} for different values of $j$, the terms $B_j(\xi)$ have disjoint support, moreover we can guarantee that on each dyadic annulus at most one among $\{B_j\}_{j=N}^{(1+\delta)N}$ is not identically equal to zero. Let $R_j\subset \Z$ the set of all $k \in \Z$ such that the restriction $\left.B_j\right|_{C_k}$ to the dyadic annulus $C_k=\{\xi\in\R: 2^k \leq |\xi|< 2^{k+1}\}$ is not identically equal to zero, then we can write 
\begin{equation*}
\|E(J_{k})\|_{\dot{\mathcal{F}}^{m,p}_q}^q \leq C \sum_{j=N}^{(1+\delta)N} \sum_{r\in R_j} \left(  (\gamma_j^{2k+1} H_j(t))^p \int_{C_r} |\xi|^{mp} |B_j|^p d\xi \right)^{q/p}. 
\end{equation*}
Next, using Lemma \ref{lemma_higher_order_sums_muskat} we can bound the range of the values on the sum $c_1^j+\cdots + c_{2k+1}^{j}$
\begin{multline*}
2^{\lfloor log_2 k_j \rfloor -1}\leq k_j/2 \leq |c_1^j + \cdots +c_{2k+1}^j|\leq (2k+1) (2\ell k_j +M)\\ \leq (2\ell+1)^2 k_j < 2^{\lfloor log_2 k_j \rfloor+1 + R},
\end{multline*}
where $R\in \N$ is the smallest integer such that $2^R > (2\ell+1)^{2}$. We conclude that the term $B_j(\xi)$ is supported in at most $R+2$ dyadic annulus $C_k$, and we can bound
\begin{equation}\label{bound_I_lower_frequency_nonperiodic}
\|E(J_{k})\|_{\dot{\mathcal{F}}^{m,p}_q}^q \leq   C \sum_{j=N}^{(1+\delta)N} (\gamma_j^{2k+1} H_j(t))^q (R+2) \left(\int_{\R} |\xi|^{mp} |B_j|^p \right)^{q/p}.
\end{equation}
To bound the integral of $|B_j|$ we get from Lemma \ref{estimate_iterated_convolution_characteristic_functions} 
\[
\|B_j\|_{\infty}\leq 2^{2k+1} (\text{number of terms in the sum}) \leq 8^{2k+1},
\]
and 
\begin{align*}
\|B_j\|_1 &\leq \sum\limits_{\substack{c_\eta^j \in  \Lambda(k_j) \\ 1\leq \eta \leq 2k+1\\ \text{not all } c_\eta^j\\ \text{ with the same sign}}} \int_\R |h_{c_1^j \cdots c_{2k+1}^j}(\xi)| d\xi\\
&= \sum\limits_{\substack{c_\eta^j \in  \Lambda(k_j) \\ 1\leq \eta \leq 2k+1\\ \text{not all } c_\eta^j\\ \text{ with the same sign}}} 2^{2k+1}(4k+2)\\
&\leq  8^{2k+1} (4k+2).
\end{align*}
Combining this results we can bound the integral of $|B_j|$ in the following way
\begin{align}
\left(\int_\R |\xi|^{mp} |B_j|^p d\xi\right)^{1/p} &\leq ((2\ell+1)^2 k_j + 2k+1)^{m} \left(\int_\R |B_j|^p d\xi \right)^{1/p} \nonumber\\
&\leq ((2\ell+1)^2 k_j + 2k+1)^{m} \|B_j\|_\infty^{\frac{p-1}{p}} \|B_j\|_1^{1/p} \nonumber\\
&\leq ((2\ell+1)^2 k_j + 2k+1)^{m} 8^{2k+1}(4k+2)^{1/p} \nonumber\\
&= C k_j^{m} , \label{bound_integral_Bj_non_periodic}
\end{align}
for a constant $C=C(m,\ell,k,p)>0$. Finally applying \eqref{bound_Hj} and \eqref{bound_integral_Bj_non_periodic} to \eqref{bound_I_lower_frequency_nonperiodic} we get
\begin{align*}
\|E(J_{k})\|_{\dot{\mathcal{F}}^{m,p}_q} &\leq C \left(\sum_{j=N}^{(1+\delta)N}  \gamma_j^{(2k+1)q}  k_j^{2 k q} e^{-\pi t q k_j/2} k_j^{mq}\right)^{1/q} \\
&\leq \frac{C}{t^{2}} \left(\sum_{j=N}^{(1+\delta)N}  \gamma_j^{(2k+1)q} k_j^{(2k-2+m)q} \right)^{1/q},
\end{align*}
for a constant $C =C(k,\ell,M,p,q) > 0$. This complete the proof of Lemma \ref{estimate_higher_order_first_part}.
\end{proof}
Before proceeding to the proof of Lemma \ref{lemma_higher_frequency_muskat_higher_order} we need an estimate for the term  $R_{c_1^{s_1}\cdots c_{2k+1}^{s_{2k+1}}}$ that will be used several time during the proof.

\begin{lemma}[Estimate $R_{c_1^{s_1}\cdots c_{2k+1}^{s_{2k+1}}}$]\label{lemma_estimate_R_higher_frequency} 
Let $R_{c_1^{s_1}\cdots c_{2k+1}^{s_{2k+1}}}$ as defined by \eqref{definition_R} and 
suppose the following
\begin{itemize}
\item[i)]   $N\leq s_i \leq (1+\delta)N$, for each $i=1, \cdots, 2k+1$,
\item[ii)] $c_i^{s_i}\in \Lambda(k_{s_i})$, $i=1,\cdots, 2k+1$ and 
\item[iii)] not all $c_1^{s_1}$, $\cdots$, $c_{2k+1}^{s_{2k+1}}$ are equal,
\end{itemize}
then given $p\geq 1$, $q\geq 1$ and $0\leq m \leq 1$ we have the estimate '
\begin{equation*}
\left\|E\left(R_{c_1^{s_1}\cdots c_{2k+1}^{s_{2k+1}}}\right)\right\|_{\dot{\mathcal{F}}^{m,p}_{q}}
\leq \frac{C }{t^{m+2} k_N } |k_{s_1} \cdots k_{s_{2k+1}} |^{\frac{2k-1}{2k+1}},
\end{equation*}
where $C= C(m,k,\ell)>0$.
\end{lemma}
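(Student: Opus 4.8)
The plan is to follow closely the argument used for Lemma~\ref{estimate_higher_order_first_part}, replacing the diagonal expression $J_k$ by the single product $R_{c_1^{s_1}\cdots c_{2k+1}^{s_{2k+1}}}$ and tracking the extra smallness produced by the mismatch of the frequencies. First I would note that if all of $c_1^{s_1},\dots,c_{2k+1}^{s_{2k+1}}$ have the same sign then, on the support of $\chi_{c_1^{s_1}}(\xi-\xi_1)\cdots\chi_{c_{2k+1}^{s_{2k+1}}}(\xi_{2k})$, the arguments $\xi-\xi_1,\xi_1-\xi_2,\dots,\xi_{2k}$ all share that sign (since $|c_i^{s_i}|>2$), so Lemma~\ref{properties_gamma_higher_dimension}~iii) gives $R_{c_1^{s_1}\cdots c_{2k+1}^{s_{2k+1}}}\equiv 0$ and there is nothing to prove; hence we may assume mixed signs. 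Using Lemma~\ref{properties_gamma_higher_dimension}~vi) together with vii) to absorb the $O(1)$ perturbation of the arguments, I would bound $|\Gamma_{2k+1}(\xi-\xi_1,\dots,\xi_{2k})|\le C|k_{s_1}\cdots k_{s_{2k+1}}|^{\frac{2k}{2k+1}}$ on the relevant region; plugging this into \eqref{expansion_inner_term_higher_order_muskat} and integrating out $\xi_1,\dots,\xi_{2k}$ against the characteristic functions yields, exactly as in \eqref{estimate_R_lemma_lower_order_terms_non_periodic},
\begin{equation*}
\left|R_{c_1^{s_1}\cdots c_{2k+1}^{s_{2k+1}}}(\xi,\tau)\right|\le C|\xi|\,|k_{s_1}\cdots k_{s_{2k+1}}|^{\frac{2k}{2k+1}}\,e^{-2\pi\tau A}\,h_{c_1^{s_1}\cdots c_{2k+1}^{s_{2k+1}}}(\xi),
\end{equation*}
with $A=\sum_i|c_i^{s_i}|-(2k+1)$ and $h_{c_1^{s_1}\cdots c_{2k+1}^{s_{2k+1}}}=\chi_{c_1^{s_1}}*\cdots*\chi_{c_{2k+1}^{s_{2k+1}}}$.

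The second step is to apply $E$ and take the $\dot{\mathcal{F}}^{m,p}_q$ norm. Since $|c_i^{s_i}|\ge k_{s_i}\ge k_N$ we get $A\ge\tfrac12\max_i k_{s_i}$ and $A\ge\tfrac12(2k+1)k_N$, while on the support of $h_{c_1^{s_1}\cdots c_{2k+1}^{s_{2k+1}}}$ one has $|\xi|\le\sum_i|c_i^{s_i}|+(2k+1)\le C\max_i k_{s_i}\le CA$. Splitting $\int_0^t e^{-2\pi(t-\tau)|\xi|}e^{-2\pi\tau A}\,d\tau$ at $\tau=t/2$ and using $e^{-2\pi(t-\tau)|\xi|}\le 1$ on $[0,t/2]$, $e^{-2\pi\tau A}\le e^{-\pi tA}$ on $[t/2,t]$ gives a bound of the form $\min(t,A^{-1})+e^{-\pi tA}\min(t,|\xi|^{-1})$. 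For the norm itself I would argue as in Lemma~\ref{estimate_higher_order_first_part}: by \eqref{growth_condition_kn_1} the convolution $h_{c_1^{s_1}\cdots c_{2k+1}^{s_{2k+1}}}$ is supported in $O(1)$ dyadic annuli near $\sum_i c_i^{s_i}$, Lemma~\ref{estimate_iterated_convolution_characteristic_functions} bounds its $L^p$ norm by an absolute constant, and the weight $|\xi|^m$ costs a factor $\le C(\max_i k_{s_i})^m$.

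To turn the powers of the $k_{s_i}$ into the claimed $t^{-(m+2)}$ and $k_N^{-1}$, I would use the elementary bound $x^{j}e^{-x}\le C_j$ (with $j=m+1$ and $j=1$) applied to $e^{-\pi tA}$ through the two lower bounds on $A$, and the inequalities $|k_{s_1}\cdots k_{s_{2k+1}}|^{\frac{1}{2k+1}}\le\max_i k_{s_i}\le 2A$, which convert $|k_{s_1}\cdots k_{s_{2k+1}}|^{\frac{2k}{2k+1}}$ into $|k_{s_1}\cdots k_{s_{2k+1}}|^{\frac{2k-1}{2k+1}}\max_i k_{s_i}$ and hence, via $\max_i k_{s_i}\le 2A$ and $A^{-1}\le Ck_N^{-1}$, into the desired $\frac1{k_N}|k_{s_1}\cdots k_{s_{2k+1}}|^{\frac{2k-1}{2k+1}}$.

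The step I expect to be the main obstacle is the term coming from $\min(t,A^{-1})$, which carries no residual exponential decay: when $\big|\sum_i c_i^{s_i}\big|$ is small there is strong cancellation among the frequencies and the factor $e^{-2\pi(t-\tau)|\xi|}$ gives nothing, so the gain must be squeezed out of the $\tau$ integral alone (in the regime $tA\le1$ the right-hand side is already large because $t$ is then forced to be small) and out of the smallness of the prefactor $|\xi|\approx\big|\sum_i c_i^{s_i}\big|$. Quantifying this requires a lower bound for $\sum_i|c_i^{s_i}|$ — and, in the delicate sub-case, for $\big|\sum_i c_i^{s_i}\big|$ — in terms of $\max_i k_{s_i}$ and $k_N$, analogous to Lemma~\ref{lemma_higher_order_sums_muskat} but now for frequencies lying in distinct sets $\Lambda(k_{s_i})$; this is exactly where the growth assumptions \eqref{growth_condition_kn_1}, \eqref{growth_condition_kn_3} and the restriction $k<\ell$ are used, and the careful bookkeeping of the resulting exponents is the technical heart of the proof.
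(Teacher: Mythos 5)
Your outline follows the paper's proof up to the treatment of the time integral, and you correctly identify the decisive arithmetic input: lower bounds on $\big|\sum_i c_i^{s_i}\big|$ and on $\sum_i|c_i^{s_i}|-\big|\sum_i c_i^{s_i}\big|$ for frequencies drawn from distinct sets $\Lambda(k_{s_i})$. This is exactly Lemma~\ref{lemma_size_of_sums_higher_order_muskat} in the paper, which gives $\big|\sum_i c_i^{s_i}\big|\geq k_N/4$ and $\max\bigl\{\big|\sum_i c_i^{s_i}\big|,\ \sum_i|c_i^{s_i}|-\big|\sum_i c_i^{s_i}\big|\bigr\}\geq\frac{1}{2}\max_i|c_i^{s_i}|$; you do not prove it, only note that it is needed. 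More seriously, the analytic reduction you do carry out fails. Bounding $e^{-2\pi(t-\tau)|\xi|}\leq 1$ on $[0,t/2]$ leaves the term $\min(t,A^{-1})$ with no residual exponential, and in the regime $A^{-1}<t$ this term is genuinely too large: combined with the prefactor $|\xi|^{1+m}|k_{s_1}\cdots k_{s_{2k+1}}|^{\frac{2k}{2k+1}}\lesssim(\max_i k_{s_i})^{1+m}(\max_i k_{s_i})\,|k_{s_1}\cdots k_{s_{2k+1}}|^{\frac{2k-1}{2k+1}}$ and $A^{-1}\lesssim(\max_i k_{s_i})^{-1}$, it gives a contribution of order $(\max_i k_{s_i})^{m+1}|k_{s_1}\cdots k_{s_{2k+1}}|^{\frac{2k-1}{2k+1}}$, exceeding the target $t^{-(m+2)}k_N^{-1}|k_{s_1}\cdots k_{s_{2k+1}}|^{\frac{2k-1}{2k+1}}$ by a factor of about $(t\max_i k_{s_i})^{m+1}(tk_N)$, which is unbounded since the $k_j$ grow arbitrarily fast relative to $1/t$ (e.g.\ $t\sim 1/k_N$ and $s_i=(1+\delta)N$). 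The rescue you gesture at (getting the gain from the $\tau$-integral alone plus ``smallness'' of $|\xi|$) cannot work: $|\xi|\geq k_N/4$ on the support of $h$, and nothing in that term produces $t^{-(m+2)}$.

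The paper avoids the splitting entirely. On the support of $h_{c_1^{s_1}\cdots c_{2k+1}^{s_{2k+1}}}$ one has $|\xi|\geq\big|\sum_i c_i^{s_i}\big|-2k-1$, hence for every $\tau\in[0,t]$
\begin{equation*}
e^{-2\pi(t-\tau)|\xi|}\,e^{-2\pi\tau(\sum_i|c_i^{s_i}|-2k-1)}\leq e^{-2\pi t(|\sum_i c_i^{s_i}|-2k-1)}\,e^{-2\pi\tau(\sum_i|c_i^{s_i}|-|\sum_i c_i^{s_i}|)},
\end{equation*}
so the $\tau$-integral yields $\bigl(\sum_i|c_i^{s_i}|-\big|\sum_i c_i^{s_i}\big|\bigr)^{-1}$ while the retained full-time exponential is converted, via $x^{m+2}e^{-x}\leq C$, into $t^{-(m+2)}\bigl(\big|\sum_i c_i^{s_i}\big|-2k-1\bigr)^{-(m+2)}$. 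The product of the two reciprocal gains is then bounded using Lemma~\ref{lemma_size_of_sums_higher_order_muskat}, which gives $\big|\sum_i c_i^{s_i}\big|\cdot\bigl(\sum_i|c_i^{s_i}|-\big|\sum_i c_i^{s_i}\big|\bigr)\geq\frac{1}{8}k_N\max_i|c_i^{s_i}|$; this produces simultaneously the factor $k_N^{-1}$ and the extra $\max_i|c_i^{s_i}|$ that lowers the exponent from $\frac{2k}{2k+1}$ to $\frac{2k-1}{2k+1}$, with the remaining powers of $\big|\sum_i c_i^{s_i}\big|$ cancelling the $|\xi|^{1+m}$ weight via Lemma~\ref{estimate_iterated_convolution_characteristic_functions}~(ii). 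Without both the simultaneous factorization and the proved arithmetic lemma, your argument does not close.
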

\begin{proof}[Proof of Lemma \ref{lemma_estimate_R_higher_frequency}]
First, we note that if all $c_i^{s_i}$, $i=1, \cdots, 2k+1$ have the same sign, then from Lemma  \ref{properties_gamma_higher_dimension} part iii) we have that  
\[
\Gamma_{2k+1}(\xi - \xi_1,\xi_1-\xi_2, \cdots, \xi_{2k}) = 0,
\]
in the region where
\[
\chi_{c_1^{s_1}}(\xi-\xi_1)\chi_{c_2^{s_2}}(\xi_1-\xi_2)\cdots \chi_{c_{2k+1}^{s_{2k+1}}}(\xi_{2k}) \neq 0,
\]
and therefore $\left\|E\left(R_{c_1^{s_1}\cdots c_{2k+1}^{s_{2k+1}}}\right)\right\|_{\dot{\mathcal{F}}^{m,p}_{q}} = 0$. From now on we can assume that not all all $c_i^{s_i}$, $i=1, \cdots, 2k+1$ have the same sign. From Lemma \ref{properties_gamma_higher_dimension} part vi) we know that
\begin{equation*}
\left|\Gamma_{2k+1}(c_1^{s_1},\cdots, c_{2k+1}^{s_{2k+1}})\right| \leq C|c_1^{s_1}\cdots c_{2k+1}^{s_{2k+1}}|^{\frac{2k}{2k+1}}, 
\end{equation*}
applying this estimate to \eqref{expansion_inner_term_higher_order_muskat} we get
\begin{multline*}
\left|R_{c_1^{s_1}\cdots c_{2k+1}^{s_{2k+1}}} \right| \leq
C (|c_1^{s_1}+\cdots +c_{2k+1}^{s_{2k+1}}| + 2k+1)\\
\times|c_1^{s_1}\cdots c_{2k+1}^{s_{2k+1}}|^{\frac{2k}{2k+1}} e^{-2\pi t(|c_1^{s_1}| + \cdots + |c_{2k+1}^{s_{2k+1}}|  -2k-1)} h_{c_1^{s_1}\cdots c_{2k + 1}^{s_{2k+1}}}(\xi),
\end{multline*}
where $h_{c_1^{s_1}\cdots c_{2k + 1}^{s_{2k+1}}}$ is given by \eqref{definition_h_higher_order_muskat}. Next we look at the term $E\left(R_{c_1^{s_1}\cdots c_{2k+1}^{s_{2k+1}}} \right)$
\begin{align}
\left|\mathcal{F}\left(E\left(R_{c_1^{s_1}\cdots c_{2k+1}^{s_{2k+1}}} \right) \right)\right| 
&\leq C \int_0^t e^{-2\pi (t-\tau)|\xi|} (|c_1^{s_1}+\cdots +c_{2k+1}^{s_{2k+1}}| + 2k+1) \nonumber \\
& \hspace{1cm}\times|c_1^{s_1}\cdots c_{2k+1}^{s_{2k+1}}|^{\frac{2k}{2k+1}}\nonumber \\
&\hspace{1cm}	\times e^{-2\pi t(|c_1^{s_1}| + \cdots + |c_{2k+1}^{s_{2k+1}}|  -2k-1)} d\tau\nonumber \\
&\hspace{1cm}	\times h_{c_1^{s_1}\cdots c_{2k + 1}^{s_{2k+1}}}(\xi)\nonumber \\
&\leq C \int_0^t e^{-2\pi (t-\tau)(|c_1^{s_1} +\cdots + c_{2k+1}^{s_{2k+1}}| -2k-1)} \nonumber \\
&\hspace{1cm}\times (|c_1^{s_1}+\cdots +c_{2k+1}^{s_{2k+1}}| + 2k+1)|c_1^{s_1}\cdots c_{2k+1}^{s_{2k+1}}|^{\frac{2k}{2k+1}}\nonumber \\
&\hspace{1cm}\times  e^{-2\pi t( |c_1^{s_1}| + \cdots + |c_{2k+1}^{s_{2k+1}}|-2k-1)} d\tau\nonumber \\
&\hspace{1cm}	\times h_{c_1^{s_1}\cdots c_{2k + 1}^{s_{2k+1}}}(\xi)\nonumber \\
&\leq  C e^{-2\pi t(|c_1^{s_1} +\cdots + c_{2k+1}^{s_{2k+1}}| -2k-1) }\nonumber \\
&\hspace{1cm}\times (|c_1^{s_1}+\cdots +c_{2k+1}^{s_{2k+1}}| + 2k+1)|c_1^{s_1}\cdots c_{2k+1}^{s_{2k+1}}|^{\frac{2k}{2k+1}}\nonumber \\
& \hspace{1cm}\times \int_0^t e^{-2\pi \tau \left(|c_1^{s_1}| + \cdots + |c_{2k+1}^{s_{2k+1}}| - |c_1^{s_1} +\cdots + c_{2k+1}^{s_{2k+1}}| \right)} d\tau\nonumber \\
&\hspace{1cm}	\times h_{c_1^{s_1}\cdots c_{2k + 1}^{s_{2k+1}}}(\xi). \label{bound_R_lemma214}
\end{align}
As in the proof of Lemma \ref{estimate_higher_order_first_part}, a key ingredient for the estimate is to have some control on the size of the term $c_1^{s_1} + \cdots + c_{2k+1}^{s_{2k+1}}$, which is provided by the following generalization of Lemma \ref{lemma_higher_order_sums_muskat}.
\begin{lemma}\label{lemma_size_of_sums_higher_order_muskat}
Let $k \leq \ell$, $c_i \in \cup_{j =N}^{(1+\delta)N}\{\pm k_j, \pm (2k_j + M)\}$ for $i=1, \cdots, 2k+1$ and suppose that $\{k_j\}_{i=1}^{\infty}$ satisfy \eqref{growth_condition_kn_1} and \eqref{growth_condition_kn_3}. Then we have the following

\begin{itemize}
\item[i)] If $c_1+ \cdots +c_{2k+1} \neq \pm M$ then 
\[
|c_1 +\cdots + c_{2k+1}| \geq |k_{i_1}|/4,
\] 
for some $N\leq i_1\leq (1+\delta)N$,
\item[ii)] If not all $c_i, i=1, \cdots, 2k+1$ have the same sign then
\[
|c_1| + \cdots + |c_{2k+1}| - |c_1 +\cdots + c_{2k+1}| \geq 2|k_{i_2}|,
\]
for some $N\leq i_2\leq (1+\delta)N$,
\item[iii)]  $\max\left\{|c_1+ \cdots + c_{2k+1}|,|c_1| + \cdots + |c_{2k+1}| - |c_1+\cdots +c_{2k+1}|\right    \}$\\ $\geq \max_j |c_{j}|/2$.
\end{itemize}
\end{lemma}
\begin{proof}[Proof of Lemma \ref{lemma_size_of_sums_higher_order_muskat}]
For part $i)$ we write $c_{j} = a_{j} k_{i_{j}} + \varepsilon_{j} M$, where $a_j \in \{\pm 1, \pm 2\ell\}$, $\varepsilon\in \{-1,0,1\}$. Then we get
\begin{equation}\label{sum_ci_lemma}
c_1 + \cdots + c_{2k+1}= \sum_{j=N}^{(1+\delta)N}b_j k_j 
 + (\varepsilon_1+ \cdots + \varepsilon_{2k+1}) M ,
\end{equation}
where $b_j = \sum\{a_n: N\leq n \leq (1+\delta)N ,k_{i_n} = k_j\}$. From condition \eqref{growth_condition_kn_1}, we get that
$\sum_{j=N}^i b_j k_j \leq \frac{1}{2} k_{i+1}$ and therefore in order for  $\sum_{j=N}^{(1+\delta)N}b_j k_j $ to vanish we need that each $b_j$ is identically equal to zero. 

To see that not all $b_j$, $N\leq j \leq (1+\delta)N$  can vanish simultaneously, we first notice that if for some $n\in\{1,\cdots, 2k+1\}$ we have $a_n = \pm 2\ell$, then by Lemma \ref{lemma_higher_order_sums_muskat} part i) if the corresponding $b_{i_n}$ is equal to zero, that would imply that $k_{i_j} = k_{i_n}$ for all $j\in\{1,\cdots,2k+1\}$ and that $|c_1 + \cdots + c_{2k+1}| = M$, which is a contradiction with our assumption. On the other, if for all $n\in \{1, \cdots, 2k+1\}$ we have $a_n = \pm 1$, then because  $\sum_{j=N}^{(1+\delta)N} b_j =  \sum_{n=1}^{2k+1} a_n$  is an odd number, at least one $b_j$, $N\leq j \leq (1+\delta)N$ is not identically equal to zero. 

Next because we know that at least one $b_j$, $N\leq j \leq (1+\delta)N$ is non zero, we can take 
$\hat{i}$ to be the largest of such $j$, then we can bound
\begin{equation*}
|a_1 k_{i_1} + \cdots + a_{2k+1}k_{i_{2k+1}}| \geq |b_{\hat{i}} k_{\hat{i}}| - \sum_{j=N}^{\hat{i}-1}|b_j| k_j\geq \frac{1}{2}k_{\hat{i}}.
\end{equation*}
Next, because in \eqref{sum_ci_lemma} we can bound $|\varepsilon_1+ \cdots + \varepsilon_{2k+1}| \leq {2k+1}$, we obtain using assumption \eqref{growth_condition_kn_1} that 
\begin{equation*}
|c_1 + \cdots + c_{2k+1}| \geq \frac{1}{2} k_{\hat{i}} - \frac{1}{2}k_0 \geq \frac{1}{4} k_{\hat{i}}.
\end{equation*}
This concludes the proof of part i). For part ii) the key argument is that because not all the $c_i$ have the same sign, then by Lemma \ref{lemma_triangle_inequality_special} we can bound
\begin{equation*}
|c_1 + \cdots + c_{2k+1}| \leq |c_1| + \cdots +|c_{2k+1}| - 2|c_{m}|,
\end{equation*}
for some $m\in \{1, \cdots, 2k+1\}$, therefore we conclude that
\begin{equation*}
|c_1| + \cdots +|c_{2k+1}| - |c_1 + \cdots + c_{2k+1}| \geq 2|c_{m}| \geq 2 k_{i_m}.
\end{equation*}
This completes the proof of part ii). Part $iii)$ comes from the observation that 
\begin{multline*}
\left(|c_1+ \cdots + c_{2k+1}|\right) + \left(|c_1| + \cdots + |c_{2k+1}| - |c_1+\cdots+ c_{2k+1}|\right)\\
 =|c_1| + \cdots + |c_{2k+1}| \geq \max_j |c_{j}|,
\end{multline*}
and because both terms are non-negative we get the result. This concludes the proof of Lemma \ref{lemma_size_of_sums_higher_order_muskat}.
\end{proof}
Continuation of proof of Lemma \ref{lemma_estimate_R_higher_frequency}. Using that  $x^{m+2} e^{-x}\leq C$
\begin{align*}
\left|\mathcal{F}\left(E(R_{c_1^{s_1}\cdots c_{2k+1}^{s_{2k+1}}} ) \right)\right| &\leq  \frac{C}{t^{m+2}} |c_1^{s_1}\cdots c_{2k+1}^{s_{2k+1}}|^{\frac{2k}{2k+1}} (|c_1^{s_1}+\cdots +c_{2k+1}^{s_{2k+1}}| + 2k+1)\\
&\hspace{1cm}\times \frac{1}{(|c_1^{s_1} +\cdots + c_{2k+1}^{s_{2k+1}}| -2k-1)^{m + 2}}\\
&\hspace{1cm}\times e^{-\pi t(|c_1^{s_1} +\cdots + c_{2k+1}^{s_{2k+1}}| -2k-1) } \\ 
& \hspace{1cm}\times \frac{1}{\left(|c_1^{s_1}| + \cdots + |c_{2k+1}^{s_{2k+1}}| - |c_1^{s_1} +\cdots + c_{2k+1}^{s_{2k+1}}| \right)} \\
& \hspace{1cm}\times \left(1- e^{-2\pi \tau \left(|c_1^{s_1}| + \cdots + |c_{2k+1}^{s_{2k+1}}| - |c_1^{s_1} +\cdots + c_{2k+1}^{s_{2k+1}}|  \right)} \right)\\
&\hspace{1cm}	\times h_{c_1^{s_1}\cdots c_{2k + 1}^{s_{2k+1}}}(\xi),
\end{align*}
to bound the product in the denominator we use the following inequality
\begin{multline*}
|c_1^{s_1} +\cdots + c_{2k+1}^{s_{2k+1}}| \times \left( |c_1^{s_1}| + \cdots + |c_{2k+1}^{s_{2k+1}}| - |c_1^{s_1} +\cdots + c_{2k+1}^{s_{2k+1}}|\right)\\ \geq \frac{1}{4}k_N \frac{1}{2} \max_i |c_{i}^{s_{i}}|.
\end{multline*}
To obtain this bound, because of Lemma \ref{lemma_size_of_sums_higher_order_muskat} part iii) we know that at least one factors can be bounded below by $\frac{1}{2} \max_i |c_{i}^{s_{i}}|$ and for the other one we apply Lemma \ref{lemma_size_of_sums_higher_order_muskat} part i) to get the lower bound of $k_N/4$. Applying this bound to our previous computation we obtain
\begin{multline*}
\left|\mathcal{F}\left(E(R_{c_1^{s_1}\cdots c_{2k+1}^{s_{2k+1}}} ) \right)\right| 
\leq \frac{C}{t^{m+2} k_N} \frac{|c_1^{s_1}\cdots c_{2k+1}^{s_{2k+1}}|^{\frac{2k}{2k+1}}}{\max_i |c_{i}^{s_{i}}|} \\
\times \frac{e^{-\pi t|c_1^{s_1} +\cdots + c_{2k+1}^{s_{2k+1}}|/2}}{(|c_1^{s_1} +\cdots + c_{2k+1}^{s_{2k+1}}| -2k-1)^{m}}	 h_{c_1^{s_1}\cdots c_{2k + 1}^{s_{2k+1}}}(\xi).
\end{multline*}
Now notice that $\max_i |c_{i}^{s_{i}}| \geq \frac{c_{j}^{s_{j}}}{(2\ell +1)}$ for all $j$, therefore we can bound
\begin{equation*}
\frac{ |c_1^{s_1}\cdots c_{2k+1}^{s_{2k+1}}|^{\frac{2k}{2k+1}}}{\max_i |c_{i}^{s_{i}}|}
\leq (2\ell+1) |c_1^{s_1}\cdots c_{2k+1}^{s_{2k+1}}|^{\frac{2k-1}{2k+1}},
\end{equation*}
combining this with our previous estimate
\begin{multline}\label{first_estimate_general_term_higher_freq_higher_order_muskat}
\left\|E\left(R_{c_1^{s_1}\cdots c_{2k+1}^{s_{2k+1}}}\right)\right\|_{\dot{\mathcal{F}}^{m,p}_{q}}
\leq \frac{C }{t^{m+2} k_N } |c_1^{s_1}\cdots c_{2k+1}^{s_{2k+1}}|^{\frac{2k-1}{2k+1}} \|\mathcal{F}^{-1}(h_{c_1^{s_1}\cdots c_{2k + 1}^{s_{2k+1}}})\|_{\dot{\mathcal{F}}^{m,p}_{q}} \\
\times \frac{e^{-\pi t|c_1^{s_1} +\cdots + c_{2k+1}^{s_{2k+1}}|/2}}{(|c_1^{s_1} +\cdots + c_{2k+1}^{s_{2k+1}}| -2k-1)^{m}},
\end{multline}
for a constant $C = C(k,p,q,\ell) >0$. Next by Lemma \ref{estimate_iterated_convolution_characteristic_functions} part (ii) we can bound the norm of $h_{c_1^{s_1}\cdots c_{2k + 1}^{s_{2k+1}}}(\xi)$ by
\begin{equation*}
\|\mathcal{F}^{-1}(h_{c_1^{s_1}\cdots c_{2k + 1}^{s_{2k+1}}}(\xi))\|_{\dot{\mathcal{F}}^{m,p}_{q}} 
\leq  C |c_1 + \cdots + c_{2k+1} + 2k+1|^{m},
\end{equation*}
for a constant $C = C(k,p) > 0$. Applying this estimate to \eqref{first_estimate_general_term_higher_freq_higher_order_muskat} and using that by Lemma \ref{lemma_size_of_sums_higher_order_muskat} part i),  $|c_1^{s_1}+\cdots + c_{2k+1}^{s_{2k+1}}| \geq k_N/4$ we obtain
\begin{align*}
\left\|E\left(R_{c_1^{s_1}\cdots c_{2k+1}^{s_{2k+1}}}\right)\right\|_{\dot{\mathcal{F}}^{m,p}_{q}}
&\leq C\frac{|c_1^{s_1}\cdots c_{2k+1}^{s_{2k+1}}|^{\frac{2k-1}{2k+1}}  }{t^{m+2} k_N } \frac{(|c_1^{s_1} +\cdots + c_{2k+1}^{s_{2k+1}}| +2k+1)^{m}}{(|c_1^{s_1} +\cdots + c_{2k+1}^{s_{2k+1}}| -2k-1)^{m}}\\
&\hspace{1cm}	\times e^{-\pi t|c_1^{s_1} +\cdots + c_{2k+1}^{s_{2k+1}}|/2}\\
&\leq  \frac{C }{t^{m+2} k_N } |c_1^{s_1}\cdots c_{2k+1}^{s_{2k+1}}|^{\frac{2k-1}{2k+1}}\\
&\leq  \frac{C }{t^{m+2} k_N } (2\ell+1)^{\frac{2k-1}{2k+1}}|k_{s_1} \cdots k_{s_{2k+1}} |^{\frac{2k-1}{2k+1}},
\end{align*}
this concludes the proof of Lemma \ref{lemma_estimate_R_higher_frequency}. 
\end{proof}

\begin{proof}[Proof of Lemma \ref{lemma_higher_frequency_muskat_higher_order}]
By applying Lemma \ref{lemma_estimate_R_higher_frequency} to equation \eqref{definition_HF_higher_order_muskat} we get

\begin{align*}
\|E(HF)\|_{\dot{\mathcal{F}}^{m,p}_{q}} &= \left\|\sum\limits_{\substack{N\leq s_i \leq (1+\delta)N\\ 1\leq i \leq 2k+1\\ \text{not all equal }}} \sum\limits_{\substack{c_i^{s_i}\in \Lambda(k_{s_i}) \\ 1\leq i \leq 2k+1}} \gamma_{s_1} \cdots \gamma_{s_{2k+1}} E(R_{c_1^{s_1}\cdots c_{2k+1}^{s_{2k+1}}})\right\|_{\dot{\mathcal{F}}^{m,p}_{q}}\\
&\leq \sum\limits_{\substack{N\leq s_i \leq (1+\delta)N\\ 1\leq i \leq 2k+1\\ \text{not all equal }}} \sum\limits_{\substack{c_i^{s_i}\in \Lambda(k_{s_i}) \\ 1\leq i \leq 2k+1}} \gamma_{s_1} \cdots \gamma_{s_{2k+1}} \left\|E(R_{c_1^{s_1}\cdots c_{2k+1}^{s_{2k+1}}})\right\|_{\dot{\mathcal{F}}^{m,p}_{q}}\\
&\leq \frac{C }{t^{m+2}  k_N} \sum\limits_{\substack{N\leq s_i \leq (1+\delta)N\\ 1\leq i \leq 2k+1\\ \text{not all equal }}} \sum\limits_{\substack{c_i^{s_i}\in \Lambda(k_{s_i}) \\ 1\leq i \leq 2k+1}} \gamma_{s_1} \cdots \gamma_{s_{2k+1}} |k_{s_1} \cdots k_{s_{2k+1}} |^{\frac{2k-1}{2k+1}}\\
&\leq \frac{C}{t^{m+2}  k_N} \left(\sum_{j=N}^{(1+\delta)N} \gamma_j  k_j^{\frac{2k-1}{2k+1}}\right)^{2k+1},
\end{align*}
for a constant $C= C(k,p,q,\ell) > 0$. This completes the proof of Lemma \ref{lemma_higher_frequency_muskat_higher_order}.
\end{proof}
\noindent
Continuation of proof of Lemma \ref{lemma_estimate_lower_order_terms_higher_order_muskat}. Using the estimates given by Lemmas \ref{estimate_higher_order_first_part} and \ref{lemma_higher_frequency_muskat_higher_order} we get that
\begin{align*}
\|f_{k} \|_{\dot{\mathcal{F}}^{m,p}_q} &\leq \left\|E(J_{k})\right\|_{\dot{\mathcal{F}}^{m,p}_q} +\left\| E(HF) \right\|_{\dot{\mathcal{F}}^{m,p}_q}\\
&\leq  \frac{C}{t^{2}} \left(\sum_{j=N}^{(1+\delta)N}  \gamma_j^{(2k+1)q}  k_j^{(2k-2+m)q} \right)^{1/q}\\
&\hspace{1cm}+ \frac{C}{t^{m+2}  k_N}  \left(\sum_{j=N}^{(1+\delta)N} \gamma_j  k_j^{\frac{2k-1}{2k+1}}\right)^{2k+1},
\end{align*}
for a constant $C=C(m,q,\ell,k)>0$. This concludes the proof of Lemma \ref{lemma_estimate_lower_order_terms_higher_order_muskat}.
\end{proof}

\begin{proof}[Proof of Lemma \ref{lemma_estimate_main_term_higher_order_muskat} ]
As in the proof of Lemma \ref{lemma_estimate_lower_order_terms_higher_order_muskat}, from equation \eqref{full_expansion_f2k1} we can write the following decomposition of $f_{\ell}$
\begin{equation}\label{decomposition_main_term_higher_order_muskat}
f_{\ell} = \frac{(-1)^{\ell}}{2\ell+1} \Big(E(J_{\ell}) + E(HF_1) + E(HF_2) \Big),
\end{equation}
where
\begin{equation}\label{definition_J1_lemma_main_term_non_periodic}
J_{\ell}(\xi,t) = \sum_{j=N}^{(1+\delta)N} \sum_{\substack{c_\eta^{j}\in \Lambda(k_j)\\ 1\leq \eta \leq 2\ell+1 \\ c^j_1+\cdots + c^j_{2\ell+1} = \pm M }} \gamma_j^{2\ell+1} R_{c^j_1\cdots c^j_{2\ell+1}}(\xi,t),
\end{equation}
\begin{equation}\label{definition_HF1_lemma_main_term_non_periodic}
HF_1(\xi,t) = \sum_{j=N}^{(1+\delta)N} \sum_{\substack{c_\eta^{j}\in \Lambda(k_j)\\ 1\leq \eta \leq 2\ell+1 \\ c^j_1+\cdots + c^j_{2\ell+1} \neq \pm M }} \gamma_j^{2\ell+1} R_{c_1^j\cdots c_{2\ell+1}^j}(\xi,t),
\end{equation}
and 
\begin{equation}\label{definition_HF2_lemma_main_term_non_periodic}
HF_2(\xi,t) = \sum\limits_{\substack{N\leq s_i \leq (1+\delta)N\\ 1\leq i \leq 2\ell+1\\ \text{not all equal }}} \sum_{\substack{c_\eta^{s_\eta} \in \Lambda(k_{s_\eta})\\ 1\leq \eta \leq 2\ell+1}} \gamma_{s_1} \cdots \gamma_{s_{2\ell + 1}}  R_{c_1^{s_1}\cdots c_{2\ell+1}^{s_{2\ell+1}}}(\xi,t) ,
\end{equation}
where $R_{c_1^{s_1}\cdots c_{2\ell+1}^{s_{2\ell+1}}}$ is defined as \eqref{definition_R}. To estimate these terms we use the following Lemmas.

\begin{lemma}\label{lower_bound_main_term_muskat_higher_order}
Let $M> 2\ell+2$, $tM \leq 1$, $t k_N > 1$, and $J_{\ell}$ as given by \eqref{definition_J1_lemma_main_term_non_periodic} then
\begin{equation*}
\|E(J_{\ell})\|_{\dot{\mathcal{F}}^{m,p}_q}\geq  C \sum_{j=N}^{(1+\delta)N}  \gamma^{2\ell+1} k_j^{2\ell-1},
\end{equation*}
where $C = C(p, q, m, M,  \ell) >0$.
\end{lemma}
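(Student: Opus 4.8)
The plan is to exploit the characterization of the diagonal resonant terms from Lemma \ref{lemma_higher_order_sums_muskat} part i): the only tuples $(c_1^j,\dots,c_{2\ell+1}^j)\in\Lambda(k_j)^{2\ell+1}$ (not all of the same sign) with $c_1^j+\dots+c_{2\ell+1}^j=\pm M$ are exactly cases (b) and (c), i.e.\ one entry equal to $\pm(2\ell k_j+M)$ and the remaining $2\ell$ entries equal to $\mp k_j$. This pins down $J_\ell$ up to combinatorial multiplicity; there are $2\cdot(2\ell+1)$ such tuples for each $j$ (the sign $\pm$ and the position of the ``large'' entry), and by Lemma \ref{properties_gamma_higher_dimension} part iv) the associated $\Gamma_{2\ell+1}$-value is the same for all of them, say with argument $(2\ell k_j+M,-k_j,\dots,-k_j)$ up to sign and permutation. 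So $\widehat{J_\ell}(\xi,t)$ is, up to the harmless prefactor and a sign, $2(2\ell+1)$ times a sum over $j$ of $R_{c_1^j\cdots c_{2\ell+1}^j}(\xi,t)$ with these explicit frequencies, each supported on frequencies $\xi$ with $|\xi|\le M+2\ell+1$ (since $c_1^j+\dots+c_{2\ell+1}^j=\pm M$ and the convolution $h_{c_1^j\cdots c_{2\ell+1}^j}$ is supported near that sum by Lemma \ref{estimate_iterated_convolution_characteristic_functions} part (i)).

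Next I would compute the leading behavior of $R$ for such a resonant tuple. Using the representation \eqref{expansion_inner_term_higher_order_muskat}, the relevant $\Gamma_{2\ell+1}$ argument is $(\xi-\xi_1,\xi_1-\xi_2,\dots,\xi_{2\ell})$ evaluated near $(2\ell k_j+M,-k_j,\dots,-k_j)$ (in some order), which by Lemma \ref{properties_gamma_higher_dimension} part i) has modulus comparable to $k_j^{2\ell-1}\cdot k_j=k_j^{2\ell}$ — actually of size $\sim k_j^{2\ell}$ from the explicit polynomial formula — and the correction term from part vii) is only $O(k_j^{2\ell-1})$. The time-dependent factor is $\int_0^t e^{-2\pi(t-\tau)|\xi|}e^{-2\pi\tau(|c_1^j|+\dots+|c_{2\ell+1}^j|-(2\ell+1))}\,d\tau$; since $|\xi|\le M+2\ell+1$ is bounded (using $tM\le 1$) while $|c_1^j|+\dots+|c_{2\ell+1}^j|\ge 4\ell k_j$, and using $tk_N>1$, this integral is, after a computation, bounded below by a constant multiple of $1/k_j$ (the $e^{-2\pi(t-\tau)|\xi|}$ factor is $\gtrsim 1$ on the relevant range and $\int_0^t e^{-2\pi\tau(4\ell k_j)}d\tau\gtrsim 1/k_j$). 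Combined with the extra factor $|\xi|\sim 1$ and $|\xi|^{mp}$ being bounded above and below by constants on the support, the $\dot{\mathcal{F}}^{m,p}_q$-norm contribution of the $j$-th term is $\gtrsim \gamma_j^{2\ell+1} k_j^{2\ell}\cdot k_j^{-1}=\gamma_j^{2\ell+1}k_j^{2\ell-1}$, provided there is no cancellation between the $2(2\ell+1)$ tuples for the same $j$ and between distinct $j$'s.

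The absence of inter-$j$ cancellation follows from condition \eqref{growth_condition_kn_1}: for distinct $j$ the functions $B_j$ (hence the $R$'s) have disjoint frequency supports, exactly as in the proof of Lemma \ref{estimate_higher_order_first_part}, so the $\dot{\mathcal{F}}^{m,p}_q$-norm $q$-th power is the sum of the $q$-th powers of the $j$-th pieces. The main obstacle — and the reason for the remark following Lemma \ref{lemma_estimate_main_term_higher_order_muskat} — is controlling the intra-$j$ sum: one must check that the $2(2\ell+1)$ resonant tuples with the same $|c_1^j|+\dots+|c_{2\ell+1}^j|$ and the same $\Gamma_{2\ell+1}$-modulus do not conspire to cancel. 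Here I would argue that the tuples with $c_1^j+\dots+c_{2\ell+1}^j=+M$ are supported near $\xi\approx M$ and those summing to $-M$ near $\xi\approx -M$, so the two sign classes do not overlap; within each sign class, the $2\ell+1$ permutations of the position of the large entry all give \emph{the same} function $R$ by Lemma \ref{properties_gamma_higher_dimension} part iv) applied inside \eqref{expansion_inner_term_higher_order_muskat} (relabeling the convolution variables), so they add constructively with multiplicity $2\ell+1$ rather than cancelling. One then picks the sign, say $+M$, restricts the $\dot{\mathcal{F}}^{m,p}_q$-norm to the single dyadic annulus containing $\xi\approx M$, and reads off the lower bound $\bigl(\sum_j (\gamma_j^{2\ell+1}k_j^{2\ell-1})^q\bigr)^{1/q}\ge C\sum_j\gamma_j^{2\ell+1}k_j^{2\ell-1}$ only if $q=1$ — but in fact for $q>1$ the $\ell^q$ sum is smaller, so I would instead not split into dyadic pieces but keep the full sum; since all the resonant frequencies across all $j$ lie in the \emph{bounded} set $\{|\xi|\le M+2\ell+1\}$, they occupy only $O(\log M)$ dyadic annuli, and on each such annulus the contributions from different $j$ (which occupy disjoint subintervals near $\pm M$) add in $L^p$, giving $\|E(J_\ell)\|_{\dot{\mathcal{F}}^{m,p}_q}\ge C\bigl(\sum_j\gamma_j^{(2\ell+1)p}k_j^{(2\ell-1)p}\bigr)^{1/p}\ge C\sum_j\gamma_j^{2\ell+1}k_j^{2\ell-1}$ when $p=1$, and in general one uses that the desired right-hand side is an $\ell^1$-type sum over a \emph{single} scale. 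I expect the cleanest route is: the frequencies for the ``$+M$'' tuples are the points $\{k_j-k_j+\dots+M\}$ all clustered in $[M-2\ell-1,M+2\ell+1]$, a fixed bounded interval, so $E(J_\ell)$ restricted there is a sum of bumps of height $\gtrsim\gamma_j^{2\ell+1}k_j^{2\ell-1}$ on essentially overlapping supports, whence the $L^1$-norm in $\xi$ (hence the $\dot{\mathcal{F}}^{m,p}_q$-norm, $m,p,q$ only affecting constants since the frequency range is bounded away from $0$ and $\infty$) is $\gtrsim\sum_j\gamma_j^{2\ell+1}k_j^{2\ell-1}$. Verifying that these overlapping bumps do not cancel — i.e.\ that $E(J_\ell)$ does not change sign on a set of full measure in this interval — is the delicate point, and I would handle it by observing that each bump is, to leading order, a fixed positive multiple (depending only on $\ell$, via the sign of the explicit $\Gamma_{2\ell+1}$-formula and of $(-1)^\ell$) of a nonnegative convolution kernel $h_{c_1^j\cdots c_{2\ell+1}^j}\ge 0$, so all bumps have the same sign and their sum's $L^1$-norm is the sum of the individual $L^1$-norms minus nothing.
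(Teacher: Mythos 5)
Your proposal follows essentially the same route as the paper: isolate the resonant tuples via Lemma \ref{lemma_higher_order_sums_muskat} part i), observe that for each $j$ they all concentrate on the fixed bounded frequency window near $\xi=\pm M$ with a common sign (a fixed multiple of the nonnegative kernel $h_{c_1^j\cdots c_{2\ell+1}^j}$), so the contributions add in an $\ell^1$ fashion across $j$ rather than cancel, and extract the factor $k_j^{-1}$ from the time integral using $tk_N>1$ and $tM\leq 1$, giving $\gamma_j^{2\ell+1}k_j^{2\ell}\cdot k_j^{-1}$ per $j$. The one step you assert rather than verify is the crux of the lemma, namely that $\Gamma_{2\ell+1}(-k_j,\dots,-k_j,2\ell k_j+M)=(-1)^{\ell+1}(2\pi)^{2\ell+1}k_j^{2\ell}+O(k_j^{2\ell-1})$ with a nonvanishing, sign-definite leading coefficient --- the paper proves this by reducing to the explicit integral $\int_{\R}\sin^{2\ell}(\beta)\sin(2\ell\beta)\beta^{-(2\ell+1)}\,d\beta$, and this does require proof since the polynomial formula of Lemma \ref{properties_gamma_higher_dimension} part i) can in principle cancel --- while your passing claim that distinct $j$ give disjoint frequency supports is false for the resonant terms, as you yourself correct later in the argument.
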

\begin{lemma}\label{upper_bound_high_freq_main_term_higher_order_muskat}
Let $HF_1$ and $HF_2$ as defined by \eqref{definition_HF1_lemma_main_term_non_periodic} and \eqref{definition_HF2_lemma_main_term_non_periodic}, then
\begin{equation*}
\|E(HF_1)\|_{\dot{\mathcal{F}}^{m,p}_q}  \leq  \frac{C}{t^{m+2}  k_N} \sum_{j=N}^{(1+\delta)N} \gamma_j^{2\ell+1}  k_j^{2\ell-1},
\end{equation*}
and
\begin{equation*}
\|E(HF_2)\|_{\dot{\mathcal{F}}^{m,p}_q} \leq  \frac{C}{t^{m+2}  k_N} \left(\sum_{j=N}^{(1+\delta)N} \gamma_j  k_j^{\frac{2\ell-1}{2\ell+1}}\right)^{2\ell+1},
\end{equation*}
where $C = C(p, q, m, M,  \ell) >0$.
\end{lemma}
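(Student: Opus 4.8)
The plan is to estimate $E(HF_1)$ and $E(HF_2)$ by reducing both of them to the single-term bound of Lemma~\ref{lemma_estimate_R_higher_frequency}, used with $k=\ell$, once the summands that vanish identically have been discarded.

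First I would treat $E(HF_1)$. Every summand in \eqref{definition_HF1_lemma_main_term_non_periodic} has the form $\gamma_j^{2\ell+1}R_{c_1^j\cdots c_{2\ell+1}^j}$ with $c_\eta^j\in\Lambda(k_j)$ and $c_1^j+\cdots+c_{2\ell+1}^j\neq\pm M$. Exactly as in the proof of Lemma~\ref{estimate_higher_order_first_part}, if all the $c_\eta^j$ share a common sign then part iii) of Lemma~\ref{properties_gamma_higher_dimension} forces $R_{c_1^j\cdots c_{2\ell+1}^j}\equiv0$, so only tuples that are \emph{not} of constant sign contribute, and such tuples are in particular not all equal. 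For them the conditions needed in the proof of Lemma~\ref{lemma_estimate_R_higher_frequency} are satisfied, namely that the $c_\eta^j$ are not all equal and that $c_1^j+\cdots+c_{2\ell+1}^j\neq\pm M$; the latter is precisely the restriction built into $HF_1$, which is why this piece was split off from the main term $J_\ell$. Applying that lemma with $s_1=\cdots=s_{2\ell+1}=j$ gives $\|E(R_{c_1^j\cdots c_{2\ell+1}^j})\|_{\dot{\mathcal{F}}^{m,p}_q}\leq C\,t^{-(m+2)}k_N^{-1}k_j^{2\ell-1}$, since $|k_j\cdots k_j|^{(2\ell-1)/(2\ell+1)}=k_j^{2\ell-1}$. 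Summing over $j$ and over the at most $4^{2\ell+1}$ admissible tuples $(c_\eta^j)_\eta$ for each $j$ and using the triangle inequality yields the first bound.

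For $E(HF_2)$ I would repeat, almost verbatim, the argument of Lemma~\ref{lemma_higher_frequency_muskat_higher_order} with $k$ replaced by $\ell$. In \eqref{definition_HF2_lemma_main_term_non_periodic} the indices $s_1,\cdots,s_{2\ell+1}$ are not all equal, so by \eqref{growth_condition_kn_1} the sets $\Lambda(k_{s_\eta})$ are pairwise disjoint whenever the corresponding $s_\eta$ differ; hence the $c_\eta^{s_\eta}$ are not all equal, and, arguing as in the ``not all $b_j$ vanish'' step in the proof of Lemma~\ref{lemma_size_of_sums_higher_order_muskat}(i), one also sees that $c_1^{s_1}+\cdots+c_{2\ell+1}^{s_{2\ell+1}}\neq\pm M$. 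Thus Lemma~\ref{lemma_estimate_R_higher_frequency} applies to each summand and gives $\|E(R_{c_1^{s_1}\cdots c_{2\ell+1}^{s_{2\ell+1}}})\|_{\dot{\mathcal{F}}^{m,p}_q}\leq C\,t^{-(m+2)}k_N^{-1}|k_{s_1}\cdots k_{s_{2\ell+1}}|^{(2\ell-1)/(2\ell+1)}$. Substituting this into \eqref{definition_HF2_lemma_main_term_non_periodic}, using the triangle inequality, carrying out the finite sums over $c_\eta^{s_\eta}\in\Lambda(k_{s_\eta})$, and then dropping the ``not all equal'' restriction so as to factor the remaining multi-index sum, one obtains $C\,t^{-(m+2)}k_N^{-1}\big(\sum_{j=N}^{(1+\delta)N}\gamma_j k_j^{(2\ell-1)/(2\ell+1)}\big)^{2\ell+1}$, which is the second bound.

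The main obstacle is, as usual in this paper, hidden in the hypotheses of Lemma~\ref{lemma_estimate_R_higher_frequency}: one needs the relevant tuples to be not of constant sign (so that the $\Gamma_{2\ell+1}$ factor does not vanish) and, crucially, to satisfy sum $\neq\pm M$. For $HF_1$ the latter holds by construction, but I would still verify that the constant produced by Lemma~\ref{lemma_estimate_R_higher_frequency} depends only on $m,\ell,p,q$ and $M$, so that the decay in $k_N$ in the final estimate is exactly the single factor $k_N^{-1}$ coming from $|c_1^j+\cdots+c_{2\ell+1}^j|\geq k_j/4\geq k_N/4$. For $HF_2$ the delicate point is verifying sum $\neq\pm M$ for genuinely mixed tuples; this is where the fast growth \eqref{growth_condition_kn_1} together with \eqref{growth_condition_kn_3} is used, exactly as in Lemmas~\ref{lemma_higher_order_sums_muskat} and~\ref{lemma_size_of_sums_higher_order_muskat}, to preclude cancellations among the $k_{s_\eta}$. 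The remaining work, namely counting the $O(1)$ sign and shift choices inside each $\Lambda(k_j)$ and collapsing the product sums, is routine and has the same shape as in the proofs of Lemmas~\ref{estimate_higher_order_first_part} and~\ref{lemma_higher_frequency_muskat_higher_order}.
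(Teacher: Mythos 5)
Your proof is correct and follows essentially the same route as the paper, whose own argument for this lemma is simply to invoke Lemma~\ref{lemma_estimate_R_higher_frequency} termwise on \eqref{definition_HF1_lemma_main_term_non_periodic} and \eqref{definition_HF2_lemma_main_term_non_periodic} and then sum exactly as in Lemma~\ref{lemma_higher_frequency_muskat_higher_order}. Your explicit verification that the summands satisfy the hypotheses needed in the proof of Lemma~\ref{lemma_estimate_R_higher_frequency} (mixed signs, and in particular $c_1+\cdots+c_{2\ell+1}\neq\pm M$, the condition under which Lemma~\ref{lemma_size_of_sums_higher_order_muskat}(i) supplies the factor $k_N/4$) is a point the paper leaves implicit, so this is a welcome addition rather than a deviation.
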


\begin{proof}[Proof of Lemma \ref{lower_bound_main_term_muskat_higher_order}]

The key element for this proof is a lower bound of $R_{c_1^j\cdots c_{2k+1}^j}$, for this purpose we need to estimate the value of 
\begin{equation*}
\Gamma_{2\ell + 1}(-k_j,\cdots ,-k_j, 2\ell k_j + M),
\end{equation*}
this can be done using the integral formula for $\Gamma_{2\ell+1}$ given by definition \ref{definition_gamma_multidimensional}
\begin{align*}
L &= \Gamma_{2\ell + 1}(-k_j,\cdots ,-k_j, 2\ell k_j + M) \\
&= i\int_\R \frac{(1-e^{2\pi i k_j \alpha})^{2\ell} (1-e^{-2\pi i (2k_j\ell + M)\alpha})}{\alpha^{2\ell+1}}d\alpha\\
&= i\int_\R \frac{(1-e^{2\pi i k_j \alpha})^{2\ell} (1-e^{-2\pi i (2\ell k_j )\alpha})}{\alpha^{2\ell+1}}d\alpha \\
&\hspace{1cm}+ i\int_\R \frac{(1-e^{2\pi i k_j \alpha})^{2\ell} (e^{-2\pi i (2\ell k_j)\alpha}-e^{-2\pi i (2\ell k_j + M)\alpha})}{\alpha^{2\ell+1}}d\alpha\\
&= I_1 + I_2.
\end{align*}
For $I_1$ we have
\begin{align*}
I_1 &=i\int_\R \frac{(1-e^{2\pi i k_j \alpha})^{2\ell} (1-e^{-2\pi i (2 k_j\ell)\alpha})}{\alpha^{2\ell+1}}d\alpha \\
&=i\int_\R \frac{(e^{-i \pi k_j \alpha}-e^{\pi i k_j \alpha})^{2\ell} (e^{\pi i (2\ell k_j) \alpha}-e^{-\pi i (2 \ell k_j) \alpha})}{\alpha^{2\ell+1}}d\alpha\\
&= i(2i)^{2\ell+1} k_j^{2\ell} \pi^{2\ell}\int_\R \frac{\sin(\beta)^{2\ell} \sin(2\ell\beta)}{\beta^{2\ell+1}}d\beta\\
&=  (-1)^{\ell+1} (2\pi)^{2\ell+1} k_j^{2\ell} .
\end{align*}
For the second term we use the following
\begin{align*}
I_2 &= i\int_\R \frac{(1-e^{2\pi i k_j \alpha})^{2\ell} (e^{-2\pi i (2\ell k_j)\alpha}-e^{-2\pi i (2\ell k_j + M)\alpha})}{\alpha^{2\ell+1}}d\alpha\\
&= i\int_\R \frac{(e^{-2\pi i k_j \alpha }-1)^{2\ell} (1-e^{-2\pi i M\alpha})}{\alpha^{2\ell+1}}d\alpha\\
&= \Gamma_{2\ell + 1}(k_j , \cdots , k_j, M)\\
&= k_j^{2\ell-1} |k_j| \Gamma_{2\ell+1}(1,\cdots ,1,M/k_j),
\end{align*}
then by applying Lemma \ref{properties_gamma_higher_dimension} part v) we conclude
\begin{equation*}
|I_2| \leq |k_j|^{2\ell}  (2\pi)^{2\ell+1} \frac{M}{|k_j|}
= |k_j|^{2\ell-1}  (2\pi)^{2\ell+1} M.
\end{equation*}
By putting the estimates for $I_1$ and $I_2$ together we get
\begin{equation}\label{computation_gamma_especial_non_periodic}
\Gamma_{2\ell + 1}(-k_j,\cdots ,-k_j, 2\ell k_j + M) = (-1)^{\ell+1} (2\pi)^{2\ell+1} k_j^{2\ell} + O(k_j^{2\ell-1}),
\end{equation}
next, applying \eqref{computation_gamma_especial_non_periodic} to \eqref{expansion_inner_term_higher_order_muskat} we get the estimate
\begin{equation*}
(-1)^{\ell+1}R_{c_1^j\cdots c_{2\ell+1}^j } \geq C |\xi| k_j^{2\ell} e^{-2\pi t(4\ell k_j + M + 2\ell+1)} h_{c_1^j \cdots c_{2\ell+1}^j}(\xi),
\end{equation*}
where $h_{c_1^j \cdots c_{2\ell+1}^j}(\xi)$ is given by \eqref{definition_h_higher_order_muskat} and a constant $C= C(M,\ell)>0$. Next, we use that by Lemma \ref{lemma_higher_order_sums_muskat} part i) the only way of getting $c_1^j+\cdots+ c_{2k+1}^j = \pm M$ with $c_i^j\in\Lambda(k_j)$, $i=1,\cdots, 2\ell+1$ are the tuples such that exactly one of the elements is equal to $\pm (2\ell k_j +M)$ and all the others equal to $\mp k_j$, and therefore this estimate apply to all terms in the sum \eqref{definition_J1_lemma_main_term_non_periodic}. Finally, using the lower bound from Lemma \ref{estimate_iterated_convolution_characteristic_functions} we get for $E(J_{\ell})$
\begin{align*}
\|E(J_{\ell})\|_{\dot{\mathcal{F}}^{m,p}_q}  &\geq C(M-2\ell-1)^{1+m}( 4\ell+2) \\
& \hspace{1cm}\times \sum_{j=N}^{(1+\delta)N}  \gamma_{j}^{2\ell+1} k_j^{2\ell}  e^{-2\pi t(M+2\ell+1)}\frac{(1-e^{-2\pi t (4 \ell k_j)})}{2\pi (4 \ell k_j)}\\
&\geq C \sum_{j=N}^{(1+\delta)N} \gamma_{j}^{2\ell+1} k_j^{2\ell-1} ,
\end{align*}
for a constant $C= C(M,\ell,m)>0$. And because $M > 2\ell+2$, $t M \leq 1$, $t k_j > 1$, this concludes the proof of Lemma \ref{lower_bound_main_term_muskat_higher_order}.

\end{proof}

\begin{proof}[Proof of Lemma \ref{upper_bound_high_freq_main_term_higher_order_muskat}]
As in the proof on Lemma \ref{lemma_higher_frequency_muskat_higher_order}, by applying Lemma  \ref{lemma_estimate_R_higher_frequency} to \eqref{definition_HF1_lemma_main_term_non_periodic} and \eqref{definition_HF2_lemma_main_term_non_periodic} we get
\begin{equation*}
\|E(HF_1)\|_{\dot{\mathcal{F}}^{m,p}_q} \leq   \frac{C}{t^{m+2}  k_N} \sum_{j=N}^{(1+\delta)N} \gamma_j^{2k+1}  k_j^{2k-1},
\end{equation*}
and
\begin{equation*}
\|E(HF_2)\|_{\dot{\mathcal{F}}^{m,p}_q}  \leq  \frac{C}{t^{m+2}  k_N} \left(\sum_{j=N}^{(1+\delta)N} \gamma_j  k_j^{\frac{2k-1}{2k+1}}\right)^{2k+1},
\end{equation*}
for a constant $C = C(m, p, q, \ell) > 0$, this complete the proof of Lemma \ref{upper_bound_high_freq_main_term_higher_order_muskat}
\end{proof}

Continuation of proof Lemma \ref{lemma_estimate_main_term_higher_order_muskat}. From the decomposition given by equation \eqref{decomposition_main_term_higher_order_muskat} we can bound the $\dot{\mathcal{F}}^{m,p}_q$ norm of $f_{\ell}$ by
\begin{equation*}
\|f_{\ell}\|_{\dot{\mathcal{F}}^{m,p}_q} \geq \|E(J_{\ell})\|_{\dot{\mathcal{F}}^{m,p}_q} - \|E(HF_1)\|_{\dot{\mathcal{F}}^{m,p}_q} -  \|E(HF_2)\|_{\dot{\mathcal{F}}^{m,p}_q}, 
\end{equation*}
then by Lemmas \ref{lower_bound_main_term_muskat_higher_order}  and \ref{upper_bound_high_freq_main_term_higher_order_muskat} we get the estimate
\begin{multline*}
\|f_{\ell}\|_{\dot{\mathcal{F}}^{m,p}_q} \geq C \sum_{j=N}^{(1+\delta)N} \gamma_j^{2\ell +1} k_j^{2\ell-1}\\ 
-  \frac{C}{t^{m+2} k_N} \sum_{j=N}^{(1+\delta)N} \gamma_j^{2\ell+1}  k_j^{2\ell-1}
-  \frac{C}{t^{m+2}  k_N} \left(\sum_{j=N}^{(1+\delta)N} \gamma_j  k_j^{\frac{2\ell-1}{2\ell+1}}\right)^{2\ell+1},
\end{multline*}
which concludes the proof of Lemma \ref{lemma_estimate_main_term_higher_order_muskat}
\end{proof}


\begin{thebibliography}{10}
\expandafter\ifx\csname url\endcsname\relax
  \def\url#1{\texttt{#1}}\fi
\expandafter\ifx\csname doi\endcsname\relax
  \def\doi#1{\burlalt{doi:#1}{http://dx.doi.org/#1}}\fi
\expandafter\ifx\csname urlprefix\endcsname\relax\def\urlprefix{URL }\fi
\expandafter\ifx\csname href\endcsname\relax
  \def\href#1#2{#2}\fi
\expandafter\ifx\csname burlalt\endcsname\relax
  \def\burlalt#1#2{\href{#2}{#1}}\fi

\bibitem{ABELS2022}
H.~Abels and B.-V. Matioc.
\newblock {Well-posedness of the Muskat problem in subcritical $L^p$ -Sobolev
  spaces}.
\newblock {\em European Journal of Applied Mathematics}, 33(2):224--266, 2022,
  \burlalt{arXiv:2003.07656}{http://arxiv.org/abs/arXiv:2003.07656}.
\newblock \doi{10.1017/S0956792520000480}.

\bibitem{Alazard2020}
T.~Alazard and O.~Lazar.
\newblock {Paralinearization of the Muskat Equation and Application to the
  Cauchy Problem}.
\newblock {\em Arch. Ration. Mech. Anal. 2020 2372}, 237(2):545--583, 2020,
  \burlalt{arXiv:1907.02138}{http://arxiv.org/abs/arXiv:1907.02138}.
\newblock \doi{10.1007/S00205-020-01514-6}.

\bibitem{Alazard2021}
T.~Alazard and Q.~H. Nguyen.
\newblock {On the Cauchy Problem for the Muskat Equation. II: Critical Initial
  Data}.
\newblock {\em Annals of PDE 2021 7:1}, 7(1):1--25, 2021,
  \burlalt{arXiv:2009.08442}{http://arxiv.org/abs/arXiv:2009.08442}.
\newblock \doi{10.1007/S40818-021-00099-X}.

\bibitem{Alazard2021b}
T.~Alazard and Q.-H. Nguyen.
\newblock {On the Cauchy problem for the Muskat equation with non-Lipschitz
  initial data}.
\newblock {\em Communications in Partial Differential Equations},
  46(11):2171--2212, 2021,
  \burlalt{arXiv:2009.04343}{http://arxiv.org/abs/arXiv:2009.04343}.
\newblock \doi{10.1080/03605302.2021.1928700}.

\bibitem{alonso-oran2021}
D.~Alonso-Or{\'{a}}n and R.~Granero-Belinch{\'{o}}n.
\newblock Global existence and decay of the inhomogeneous muskat problem with
  lipschitz initial data.
\newblock {\em Nonlinearity}, 35(9):4749--4778, 2022,
  \burlalt{arXiv:2108.11111}{http://arxiv.org/abs/arXiv:2108.11111}.
\newblock \doi{10.1088/1361-6544/ac803e}.

\bibitem{Bourgain2015}
J.~Bourgain and D.~Li.
\newblock {Strong illposedness of the incompressible Euler equation in integer
  $C^m$ spaces}.
\newblock {\em Geometric and Functional Analysis}, 25(1):1--86, 2015,
  \burlalt{arXiv:1405.2847}{http://arxiv.org/abs/arXiv:1405.2847}.
\newblock \doi{10.1007/s00039-015-0311-1}.

\bibitem{Bourgain2008}
J.~Bourgain and N.~Pavlovi{\'{c}}.
\newblock {Ill-posedness of the Navier–Stokes equations in a critical space
  in 3D}.
\newblock {\em Journal of Functional Analysis}, 255(9):2233--2247, 2008,
  \burlalt{arXiv:0807.0882}{http://arxiv.org/abs/arXiv:0807.0882}.
\newblock \doi{10.1016/j.jfa.2008.07.008}.

\bibitem{Byers2006}
P.~J. Byers.
\newblock {Existence time for the Camassa-Holm equation and the critical
  Sobolev index}.
\newblock {\em Indiana Univ. Math. J.}, 55(3):941--954, 2006.
\newblock \doi{10.1512/iumj.2006.55.2710}.

\bibitem{Cameron2019}
S.~Cameron.
\newblock {Global well-posedness for the two-dimensional Muskat problem with
  slope less than 1}.
\newblock {\em Analysis {\&} PDE}, 12(4):997--1022, 2019,
  \burlalt{arXiv:1704.08401}{http://arxiv.org/abs/arXiv:1704.08401}.
\newblock \doi{10.2140/apde.2019.12.997}.

\bibitem{Cameron2020}
S.~{Cameron}.
\newblock {Global wellposedness for the 3D Muskat problem with medium size
  slope}.
\newblock {\em arXiv e-prints}, 2020,
  \burlalt{2002.00508}{http://arxiv.org/abs/2002.00508}.

\bibitem{Castro2013}
A.~Castro, D.~C{\'{o}}rdoba, C.~Fefferman, and F.~Gancedo.
\newblock {Breakdown of Smoothness for the Muskat Problem}.
\newblock {\em Archive for Rational Mechanics and Analysis}, 208(3):805--909,
  2013, \burlalt{arXiv:1201.2525}{http://arxiv.org/abs/arXiv:1201.2525}.
\newblock \doi{10.1007/s00205-013-0616-x}.

\bibitem{Castro2012}
{\'{A}}.~Castro, D.~C{\'{o}}rdoba, C.~Fefferman, F.~Gancedo, and
  M.~L{\'{o}}pez-Fern{\'{a}}ndez.
\newblock {Rayleigh-Taylor breakdown for the Muskat problem with applications
  to water waves}.
\newblock {\em Annals of Mathematics}, 175(2):909--948, 2012,
  \burlalt{arXiv:1102.1902}{http://arxiv.org/abs/arXiv:1102.1902}.
\newblock \doi{10.4007/annals.2012.175.2.9}.

\bibitem{Castro2011}
A.~Castro, D.~C{\'{o}}rdoba, C.~L. Fefferman, F.~Gancedo, and
  M.~L{\'{o}}pez-Fern{\'{a}}ndez.
\newblock {Turning waves and breakdown for incompressible flows}.
\newblock {\em Proc. Natl. Acad. Sci. U. S. A.}, 108(12):4754--4759, 2011,
  \burlalt{arXiv:1011.5996}{http://arxiv.org/abs/arXiv:1011.5996}.
\newblock \doi{10.1073/PNAS.1101518108}.

\bibitem{chen2022}
K.~{Chen}, Q.-H. {Nguyen}, and Y.~{Xu}.
\newblock {The Muskat problem with $C^1$ data}.
\newblock {\em Trans. Amer. Math. Soc.}, 365:3039--3060, 2022,
  \burlalt{arXiv:2103.09732}{http://arxiv.org/abs/arXiv:2103.09732}.
\newblock \doi{10.1090/tran/8559}.

\bibitem{Cheng2016}
C.~A. Cheng, R.~Granero-Belinch{\'{o}}n, and S.~Shkoller.
\newblock {Well-posedness of the Muskat problem with $H^2$ initial data}.
\newblock {\em Advances in Mathematics}, 286:32--104, 2016,
  \burlalt{arXiv:1412.7737}{http://arxiv.org/abs/arXiv:1412.7737}.
\newblock \doi{10.1016/j.aim.2015.08.026}.

\bibitem{Cheskidov2010}
A.~Cheskidov and R.~Shvydkoy.
\newblock {Ill-posedness of the basic equations of fluid dynamics in Besov
  spaces}.
\newblock {\em Proceedings of the American Mathematical Society},
  138(03):1059--1059, 2010,
  \burlalt{arXiv:0904.2196}{http://arxiv.org/abs/arXiv:0904.2196}.
\newblock \doi{10.1090/S0002-9939-09-10141-7}.

\bibitem{Cheskidov2012}
A.~Cheskidov and R.~Shvydkoy.
\newblock {Ill-posedness for subcritical hyperdissipative Navier-Stokes
  equations in the largest critical spaces}.
\newblock {\em Journal of Mathematical Physics}, 53(11):115620, 2012,
  \burlalt{arXiv:1212.4207}{http://arxiv.org/abs/arXiv:1212.4207}.
\newblock \doi{10.1063/1.4765332}.

\bibitem{Constantin2016}
P.~Constantin, D.~C{\'{o}}rdoba, F.~Gancedo, L.~Rodr{\'{i}}guez-Piazza, and
  R.~M. Strain.
\newblock {On the Muskat problem: Global in time results in 2D and 3D}.
\newblock {\em American Journal of Mathematics}, 138(6):1455--1494, 2016,
  \burlalt{arXiv:1310.0953}{http://arxiv.org/abs/arXiv:1310.0953}.
\newblock \doi{10.1353/ajm.2016.0044}.

\bibitem{Constantin2013}
P.~Constantin, D.~C{\'{o}}rdoba, F.~Gancedo, and R.~Strain.
\newblock {On the global existence for the Muskat problem}.
\newblock {\em Journal of the European Mathematical Society}, 15(1):201--227,
  2013, \burlalt{arXiv:1007.3744}{http://arxiv.org/abs/arXiv:1007.3744}.
\newblock \doi{10.4171/JEMS/360}.

\bibitem{Constantin2015}
P.~Constantin, F.~Gancedo, R.~Shvydkoy, and V.~Vicol.
\newblock {Global regularity for 2D Muskat equations with finite slope}.
\newblock {\em Annales de l'Institut Henri Poincare (C) Non Linear Analysis},
  34(4):1041--1074, 2017,
  \burlalt{arXiv:1507.01386}{http://arxiv.org/abs/arXiv:1507.01386}.
\newblock \doi{10.1016/j.anihpc.2016.09.001}.

\bibitem{Cordoba2013}
A.~C{\'{o}}rdoba, D.~C{\'{o}}rdoba, and F.~Gancedo.
\newblock {Porous media: The Muskat problem in three dimensions}.
\newblock {\em Analysis {\&} PDE}, 6(2):447--497, 2013,
  \burlalt{arXiv:1005.3536}{http://arxiv.org/abs/arXiv:1005.3536}.
\newblock \doi{10.2140/apde.2013.6.447}.

\bibitem{Cordoba2007}
D.~C{\'{o}}rdoba and F.~Gancedo.
\newblock {Contour Dynamics of Incompressible 3-D Fluids in a Porous Medium
  with Different Densities}.
\newblock {\em Communications in Mathematical Physics}, 273(2):445--471, 2007.
\newblock \doi{10.1007/s00220-007-0246-y}.

\bibitem{Cordoba2008}
D.~C{\'{o}}rdoba, F.~Gancedo, and R.~Orive.
\newblock {A note on interface dynamics for convection in porous media}.
\newblock {\em Physica D: Nonlinear Phenomena}, 237(10-12):1488--1497, 2008.
\newblock \doi{10.1016/j.physd.2008.03.042}.

\bibitem{Cordoba2018}
D.~C{\'{o}}rdoba and O.~Lazar.
\newblock {Global well-posedness for the 2D stable Muskat problem in
  $H^{3/2}$}.
\newblock {\em Annales scientifiques de l'{\'{E}}cole Normale
  Sup{\'{e}}rieure}, 54(5):1315--1351, 2021,
  \burlalt{arXiv:1803.07528}{http://arxiv.org/abs/arXiv:1803.07528}.
\newblock \doi{10.24033/asens.2483}.

\bibitem{Deng2017}
F.~Deng, Z.~Lei, and F.~Lin.
\newblock {On the Two-Dimensional Muskat Problem with Monotone Large Initial
  Data}.
\newblock {\em Communications on Pure and Applied Mathematics},
  70(6):1115--1145, 2017,
  \burlalt{arXiv:1603.03949}{http://arxiv.org/abs/arXiv:1603.03949}.
\newblock \doi{10.1002/cpa.21669}.

\bibitem{dong2021}
H.~{Dong}, F.~{Gancedo}, and H.~Q. {Nguyen}.
\newblock {Global well-posedness for the one-phase Muskat problem}.
\newblock {\em arXiv e-prints}, 2021,
  \burlalt{arXiv:2103.02656}{http://arxiv.org/abs/arXiv:2103.02656}.

\bibitem{Gancedo2019}
F.~{Gancedo}, E.~{Garcia-Juarez}, N.~{Patel}, and R.~{Strain}.
\newblock {Global Regularity for Gravity Unstable Muskat Bubbles}.
\newblock {\em arXiv e-prints}, 2019,
  \burlalt{1902.02318}{http://arxiv.org/abs/1902.02318}.

\bibitem{Gancedo2017}
F.~Gancedo, E.~Garc{\'{i}}a-Ju{\'{a}}rez, N.~Patel, and R.~Strain.
\newblock {On the Muskat problem with viscosity jump: Global in time results}.
\newblock {\em Advances in Mathematics}, 345(1):552--597, 2019,
  \burlalt{arXiv:1710.11604}{http://arxiv.org/abs/arXiv:1710.11604}.
\newblock \doi{10.1016/j.aim.2019.01.017}.

\bibitem{gancedo_global_2022}
F.~Gancedo and O.~Lazar.
\newblock Global {Well}-posedness for the {Three} {Dimensional} {Muskat}
  {Problem} in the {Critical} {Sobolev} {Space}.
\newblock {\em Archive for Rational Mechanics and Analysis}, 2022,
  \burlalt{arXiv:2006.01787}{http://arxiv.org/abs/arXiv:2006.01787}.
\newblock \doi{10.1007/s00205-022-01808-x}.

\bibitem{Germain2008}
P.~Germain.
\newblock {The second iterate for the Navier–Stokes equation}.
\newblock {\em Journal of Functional Analysis}, 255(9):2248--2264, 2008,
  \burlalt{arXiv:0806.4525}{http://arxiv.org/abs/arXiv:0806.4525}.
\newblock \doi{10.1016/j.jfa.2008.07.014}.

\bibitem{Iwabuchi2016}
T.~Iwabuchi and T.~Ogawa.
\newblock {Ill-posedness issue for the drift diffusion system in the
  homogeneous Besov spaces}.
\newblock {\em Osaka Journal of Mathematics}, 53(4):919--939, 2016.

\bibitem{Iwabuchi2022}
T.~Iwabuchi and T.~Ogawa.
\newblock {Ill-posedness for the compressible Navier–Stokes equations under
  barotropic condition in limiting Besov spaces}.
\newblock {\em Journal of the Mathematical Society of Japan}, 74(2):353--394,
  2022.
\newblock \doi{10.2969/JMSJ/81598159}.

\bibitem{Li2020}
J.~Li, P.~Hong, and W.~Zhu.
\newblock {Ill-posedness for the 2D viscous shallow water equations in the
  critical Besov spaces}.
\newblock {\em J. Evol. Equations}, 20:1287--1299, 2020,
  \burlalt{arXiv:1712.09546}{http://arxiv.org/abs/arXiv:1712.09546}.
\newblock \doi{10.1007/s00028-019-00556-y}.

\bibitem{Li2020b}
Z.~Li and W.~Wang.
\newblock {Norm inflation for the Boussinesq system}.
\newblock {\em Discrete \& Continuous Dynamical Systems - B}, 26(10):5449,
  2021, \burlalt{arXiv:1912.06114}{http://arxiv.org/abs/arXiv:1912.06114}.
\newblock \doi{10.3934/DCDSB.2020353}.

\bibitem{Matioc2017}
A.-V. Matioc and B.-V. Matioc.
\newblock {Well-posedness and stability results for a quasilinear periodic
  Muskat problem}.
\newblock {\em Journal of Differential Equations}, 266(9):5500--5531, 2019,
  \burlalt{arXiv:1706.09260}{http://arxiv.org/abs/arXiv:1706.09260}.
\newblock \doi{10.1016/j.jde.2018.10.038}.

\bibitem{Matioc2019}
B.-V. Matioc.
\newblock {The Muskat problem in two dimensions: equivalence of formulations,
  well-posedness, and regularity results}.
\newblock {\em Analysis {\&} PDE}, 12(2):281--332, 2019,
  \burlalt{arXiv:1610.05546}{http://arxiv.org/abs/arXiv:1610.05546}.
\newblock \doi{10.2140/apde.2019.12.281}.

\bibitem{Misioek2016}
G.~Misio{\l}ek and T.~Yoneda.
\newblock {Local ill-posedness of the incompressible Euler equations in $C^1$
  and $B^1_{\infty,1}$}.
\newblock {\em Mathematische Annalen}, 364(1-2):243--268, 2016,
  \burlalt{arXiv:1405.1943}{http://arxiv.org/abs/arXiv:1405.1943}.
\newblock \doi{10.1007/s00208-015-1213-0}.

\bibitem{muskat1934}
M.~Muskat.
\newblock Two fluid systems in porous media. the encroachment of water into an
  oil sand.
\newblock {\em Physics}, 5(9):250--264, 1934.
\newblock \doi{10.1063/1.1745259}.

\bibitem{Nguyen2022}
H.~Q. Nguyen.
\newblock Global solutions for the muskat problem in the scaling invariant
  besov space $\dot{B}^1_{\infty,1}$.
\newblock {\em Adv. Math. (N. Y).}, 394:108122, 2022,
  \burlalt{arXiv:2103.14535}{http://arxiv.org/abs/arXiv:2103.14535}.
\newblock \doi{10.1016/J.AIM.2021.108122}.

\bibitem{Nguyen2020}
H.~Q. Nguyen and B.~Pausader.
\newblock {A Paradifferential Approach for Well-Posedness of the Muskat
  Problem}.
\newblock {\em Arch. Ration. Mech. Anal. 2020 2371}, 237(1):35--100, 2020,
  \burlalt{arXiv:1907.03304}{http://arxiv.org/abs/arXiv:1907.03304}.
\newblock \doi{10.1007/S00205-020-01494-7}.

\bibitem{Patel2017}
N.~Patel and R.~M. Strain.
\newblock {Large time decay estimates for the Muskat equation}.
\newblock {\em Communications in Partial Differential Equations},
  42(6):977--999, 2017,
  \burlalt{arXiv:1610.05271}{http://arxiv.org/abs/arXiv:1610.05271}.
\newblock \doi{10.1080/03605302.2017.1321661}.

\bibitem{Tsurumi2019}
H.~Tsurumi.
\newblock {Ill-posedness of the stationary Navier-Stokes equations in Besov
  spaces}.
\newblock {\em J. Math. Anal. Appl.}, 475(2):1732--1743, 2019.
\newblock \doi{10.1016/J.JMAA.2019.03.046}.

\bibitem{Wang2015}
B.~Wang.
\newblock {Ill-posedness for the Navier–Stokes equations in critical Besov
  spaces $B^{-1}_{\infty,q}$ }.
\newblock {\em Advances in Mathematics}, 268:350--372, 2015,
  \burlalt{arXiv:1403.2461}{http://arxiv.org/abs/arXiv:1403.2461}.
\newblock \doi{10.1016/j.aim.2014.09.024}.

\end{thebibliography}
\end{document}